\documentclass[11pt,reqno]{amsart}
\usepackage[T1]{fontenc}
\usepackage{lmodern,amsmath,amssymb,amsthm,mathtools,mathrsfs,esint}
\usepackage{enumitem,booktabs,array,needspace}
\usepackage[margin=1in]{geometry}
\usepackage[expansion=false]{microtype}
\usepackage[colorlinks=true,linkcolor=blue,citecolor=blue,urlcolor=blue]{hyperref}
\hypersetup{pdftitle={Non-simple blow-up for the Chern--Simons--Higgs equation: A priori analysis and constructions},
pdfauthor={Youngae Lee, and Lei Zhang},
pdfsubject={A priori estimates and constructions in both profile regimes}}
\allowdisplaybreaks[2]
\AtBeginDocument{%
  \setlength{\abovedisplayskip}{3.5pt plus 3pt minus 1pt}%
  \setlength{\belowdisplayskip}{3.5pt plus 3pt minus 1pt}%
  \setlength{\abovedisplayshortskip}{0pt plus 2pt}%
  \setlength{\belowdisplayshortskip}{2pt plus 2pt minus 1pt}%
  \setlength{\jot}{2pt}%
}
\numberwithin{equation}{section}
\newtheorem{theorem}{Theorem}[section]
\newtheorem{proposition}[theorem]{Proposition}
\newtheorem{lemma}[theorem]{Lemma}

\theoremstyle{definition}
\newtheorem{definition}[theorem]{Definition}
\newtheorem{assumption}[theorem]{Assumption}
\theoremstyle{remark}
\newtheorem{remark}[theorem]{Remark}
\newtheorem*{remark*}{Remark}
\newcommand{\R}{\mathbb R}
\newcommand{\T}{\mathbb T}
\newcommand{\cS}{\mathcal S}
\newcommand{\eps}{\varepsilon}
\newcommand{\etae}{\eta_{\eps}}
\newcommand{\de}{\delta_{\eps}}
\newcommand{\we}{w_{\eps}}
\newcommand{\tr}{\operatorname{tr}}
\newcommand{\dist}{\operatorname{dist}}
\newcommand{\loc}{\mathrm{loc}}
\newcommand{\comp}{\mathrm{comp}}
\newcommand{\tot}{\mathrm{tot}}

\newcommand{\C}{\mathbb C}
\newcommand{\dd}{\,dx}
\newcommand{\calA}{\mathcal A}
\DeclareMathOperator{\Res}{Res}
\DeclareMathOperator{\osc}{osc}
\title[Non-simple blow-up for the Chern--Simons--Higgs equation]
{Non-simple blow-up for the Chern--Simons--Higgs equation:
A priori analysis and constructions}
 
\author{Youngae Lee}
\address[Youngae Lee]{Department of Mathematical Sciences,
Ulsan National Institute of Science and Technology (UNIST), Ulsan, Republic of Korea}
\email{youngaelee@unist.ac.kr}
\author{Lei Zhang}
\address[Lei Zhang]{Department of Mathematics, University of Florida,
Gainesville, FL 32611, USA}
\email{leizhang@ufl.edu} 
\date{}
\subjclass[2020]{35B44, 35J61, 35J75}
\keywords{Chern--Simons--Higgs equation, non-simple blow-up, a priori estimate,
finite-height profile, mean-field limit, collapsing bubbles, Pohozaev identity}
\begin{document}
 
\begin{abstract}

 Blow-up in the self-dual Chern--Simons--Higgs equation has a rich
multiscale structure, but the detailed theory has largely focused on
the simple regime, where one entire profile resolves each
concentration point. By contrast, non-simple blow-up is analytically more delicate:
several interacting regular cores must be resolved simultaneously
as they collapse toward a single vortex while remaining separated
on their own scales. Because of this difficulty, the existence of
such solutions has remained a long-standing open problem. To the best of our
knowledge, we prove the first existence result for non-simple blow-up
solutions of this equation, covering in particular the previously
unresolved finite-height Chern--Simons regime.

We also show that non-simple clusters have equal
limiting core masses, a common profile type, and regular-polygon
arrangements. We determine their possible total masses and derive a
mass-gap criterion that rules out such blow-up for certain vortex
configurations. Our quantitative profile and moment estimates for general
finite-height clusters guide the design of the approximate solution
and clarify the core interactions underlying the finite-height
construction. A separate
construction on the unit disk produces two mean-field cores along a
sequence with varying Dirichlet traces.
\end{abstract}

\maketitle

\section{Introduction and main results}\label{sec:introduction}

On a smooth closed connected Riemannian surface $(M,g)$, the scalar
self-dual Chern--Simons--Higgs equation is
\begin{equation}\label{eq:main}
 \Delta_gu_\eps+\eps^{-2}F(u_\eps)
 =4\pi\sum_{i=1}^{N_{\mathrm v}}N_i\delta_{p_i},
\end{equation}
where $\eps>0$, the vortex points $p_i$ and strengths $N_i>0$ are fixed,
$N_{\tot}=\sum_iN_i$, and
\begin{equation}\label{eq:F-definition}
 F(t)=e^t(1-e^t),\qquad \mathcal P(t)=e^t-\tfrac12e^{2t},\qquad \mathcal P'=F.
\end{equation}
For integer $N_i$, the function $u_\eps$ is the logarithm of the squared
Higgs modulus, normalized to one in the vacuum; its singularity
$2N_i\log d_g(x,p_i)$ gives the source $4\pi N_i\delta_{p_i}$.
The equation describes electrically charged vortices, with a flat
torus representing a periodic condensate cell
\cite{HongKimPac,JackiwWeinberg,SpruckYang}.  
For fixed nonempty   vortex data on a flat torus, solutions
exist precisely for $0<\eps\le\eps_c$, with at least two solutions for
$0<\eps<\eps_c$ \cite{CaffarelliYang,Tarantello}.

Since the seminal vortex constructions, the self-dual Chern--Simons--Higgs equation has provided a fertile setting for nonlinear elliptic analysis: existence and multiplicity, concentration, local profiles, and quantized mass have all been studied extensively; representative contributions include \cite{CaffarelliYang,Tarantello,CFL,ChoeKim,CKL,LinYanCMP,LinYanARMA,DelPinoEspositoFigueroaMusso,LinYanAIHP,LinYanAdvMath}. In the blow-up theory, however, the most complete results concern simple concentration, for which a single entire profile captures the local behavior. A non-simple cluster requires several regular cores to be resolved simultaneously while their mutual distances vanish on the ambient scale. This creates interacting translation, scale, and mass modes, and prevents a direct reduction to the established one-bubble analysis.  {Non-simple Liouville-type blow-up has been analyzed, and in several settings constructed, for singular Liouville and mean-field equations \cite{KuoLin,BartolucciTarantello,DAprileWeiZhangDCDS,DAprileWeiZhangCVPDE,DAprileWeiZhangAJM}. The corresponding existence problem for the full Chern--Simons--Higgs equation has nevertheless remained open in both profile regimes.} To the best of our knowledge, the present paper gives the first existence result for non-simple blow-up solutions of this equation; in particular, it gives the first construction in the finite-height Chern--Simons regime.

\subsection{Concentration regimes and related work}

A concentrating regular core has a profile solving either
\[
 \Delta U+e^U(1-e^U)=0
 \quad\hbox{or}\quad \Delta V+e^V=0
 \qquad\hbox{in }\R^2.
\]
The finite-height Chern--Simons profile has height-dependent mass
greater than $8\pi$; the mean-field Liouville profile has mass $8\pi$
\cite{CFL,Choe,ChoeKim,CKL,LinYanAIHP}. Thus the finite-height
interaction problem must determine the individual masses as well
as the core positions.

Lin--Yan~\cite{LinYanAIHP} analyze both regimes, exclude their
coexistence when all blow-up points avoid the vortices, and construct
multiple Chern--Simons bubbles. Their periodic constructions also
include strength-one vortex bubbling~\cite{LinYanCMP} and
mean-field bubbles~\cite{LinYanARMA}; necessary conditions and local
uniqueness appear in~\cite{LinYanAdvMath}.
Del Pino--Esposito--Figueroa--Musso~\cite{DelPinoEspositoFigueroaMusso}
construct periodic mean-field condensates using weighted entire
Liouville profiles. Although the cited constructions allow multiple bubbles, each
concentration point in the constructed solutions is described by a
single local profile. However, the existence of non-simple solutions,
in which several regular cores collapse at one vortex while remaining
separated on their own scales, had remained open.

Li--Liu~\cite[Theorems~1.2 and~1.3]{LiLiu} prove an energy identity
without neck loss at vortex points, describe the singular and regular
bubble alternatives, and obtain simplicity in low-vorticity cases. In
this paper, we complement this analysis by determining the individual
core masses and relative positions, and by establishing the
realizability of non-simple clusters.

For singular Liouville and mean-field equations, the regular-polygon
structure and pointwise estimates were obtained independently by
Kuo--Lin~\cite{KuoLin} and Bartolucci--Tarantello~\cite[Theorem~1.4
and Section~6]{BartolucciTarantello}. Higher-order estimates and
vanishing results appear in
\cite{WeiZhang,WeiZhangVanishing,WeiZhangLaplacian,DAprileWeiZhangTAMS},
and construction or exclusion results in
\cite{DAprileWeiZhangDCDS,DAprileWeiZhangCVPDE,DAprileWeiZhangAJM}. 

The new difficulty in the finite-height regime is to control
interacting cores as they collapse at a single vortex, while keeping
the structural and local estimates uniform across scales.  Our quantitative result, Theorem~\ref{prop:refined-profile}, combines
explicit center-shift asymptotics, a second Fourier correction, and
a controlled comparison of local and radial-profile masses for
individual finite-height cores. Its proof and moment formulas are
in Appendix~\ref{app:pointwise-proofs}. The structural results cover
both regimes; a complementary disk construction realizes the
mean-field regime with varying boundary traces.

\subsection{Cluster structure and a mass-gap criterion}

We first specify the meaning of non-simple blow-up. Let
$\mathcal Z=\{p_1,\ldots,p_{N_{\mathrm v}}\}$, and write $|M|_g$
for the area of $M$. For a concentrating family $(u_\eps)$ of solutions
of \eqref{eq:main} in the setting of Section~\ref{sec:setting},
we use the following normalization.
\begin{align}
 -\Delta_{g,x}G_g(x,y)&=\delta_y-|M|_g^{-1},\qquad
 \int_MG_g(x,y)\,dV_g(x)=0,\label{eq:green}\\
 u_0(x)&=-4\pi\sum_iN_iG_g(x,p_i),\qquad
 w_\eps=u_\eps-u_0-2\log\eps.\label{eq:u0-w}
\end{align}
Here $d_g$ denotes geodesic distance, and $B_r^g(q)$ denotes
its corresponding geodesic disk. In local conformal coordinates
and in rescaled variables, $B_r(q)$ denotes a Euclidean disk.
\begin{definition}[Blow-up and non-simplicity]\label{def:simple}
A point $q$ belongs to the blow-up set $\cS_u$ if
$w_\eps(x_{\eps,q})\to+\infty$ for some $x_{\eps,q}\to q$.
Put $n_q=0$ when $q\notin\mathcal Z$, and $n_q=N_i$ when $q=p_i$.
For a vortex, set $\xi_{\eps,q}=q$; for a regular blow-up point,
choose a local maximum $\xi_{\eps,q}\to q$ of $w_\eps$.
The point is simple if such a choice satisfies, for some small $r>0$,
\begin{equation}\label{eq:simple}
 \limsup_{\eps\to0}\sup_{B_r(q)}
 \{w_\eps(x)+2(1+n_q)\log d_g(x,\xi_{\eps,q})\}<\infty;
\end{equation}
otherwise it is non-simple, after passing to a subsequence.
\end{definition}
The moving center at a regular point allows a single core to drift
toward its limit; at a vortex the singular source fixes the center.

Normalize masses by $2\pi$, so the total mass is $2N_{\tot}$.
Under the hypotheses of Section~\ref{sec:setting}, logarithmic
comparisons and exterior estimates based on \cite{CKL,LinYanAIHP}
give a common limiting core mass $m\ge4$: all cores are mean-field
if $m=4$ and Chern--Simons if $m>4$, without prescribing their type
or a common height. In Section~\ref{sec:cluster}, Pohozaev identities
and a further rescaling yield a regular $S$-gon and
\[
 m(S-1)=4N_i,\qquad M_{p_i}=mS=\frac{4N_iS}{S-1}.
\]
In particular, $1\le S-1\le N_i$, with equality on the right precisely in the
mean-field case. The finite-height hypothesis used for the sharp
estimates in Section~\ref{sec:pointwise} is not imposed in this
structural result.

\begin{theorem}[Non-simple masses and the four-unit gap criterion]
\label{thm:ratio-free-four-unit-gap}
Let \((u_\eps)\) be a concentrating non-topological family of solutions of
\eqref{eq:main} in the setting of Section~\ref{sec:setting}.
For a singular source $p_i$, put
\[
 \mathscr A_i:=\{k\in\mathbb N:1\le k\le N_i\},
 \qquad
 M_{i,k}:=\frac{4N_i(k+1)}{k}.
\]
If a non-simple cluster occurs at $p_i$ with $S=k+1$ cores, then
$k\in\mathscr A_i$, its normalized mass is $M_{i,k}$, and necessarily
\begin{equation}\label{eq:necessary-alternative}
2N_{\mathrm{tot}}-M_{i,k}\in\{0\}\cup[4,\infty).
\end{equation}
In particular, suppose that, for every $i$ and every $k\in\mathscr A_i$,
\begin{equation}
 2N_{\rm tot}-M_{i,k}\in(-\infty,0)\cup(0,4).
 \label{eq:ratio-free-four-unit-gap}
\end{equation}
Then every blow-up point is simple.  
\end{theorem}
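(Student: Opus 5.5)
The argument combines the structural description stated just before the theorem with a global mass decomposition.

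\emph{Step 1: the mass of the cluster.} Fix a non-simple cluster at $p_i$ with $S$ cores. By the structural result of Section~\ref{sec:cluster}, all cores have a common limiting mass $m\ge4$. The Pohozaev identity on a small disk around $p_i$, combined with the further rescaling, gives
\[
m(S-1)=4N_i,\qquad M_{p_i}=mS .
\]
Write $k=S-1$. Non-simplicity forces $S\ge2$, so $k\ge1$. Since $m\ge4$, we get $k=4N_i/m\le N_i$, and $k$ is an integer, so $k\in\mathscr A_i$. Substituting $m=4N_i/k$ then gives $M_{p_i}=4N_i(k+1)/k=M_{i,k}$.

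\emph{Step 2: global mass bookkeeping.} Integrating \eqref{eq:main} over $M$ shows that $\eps^{-2}\int_MF(u_\eps)\,dV_g=4\pi N_{\tot}$, which is $2N_{\tot}$ after normalizing by $2\pi$. For a non-topological concentrating family, the energy identity without neck loss (Li--Liu, together with the exterior estimates of Section~\ref{sec:setting}) says the following. Away from $\cS_u$, the normalized density $\eps^{-2}F(u_\eps)$ converges to zero in $L^1_{\loc}$, because $w_\eps\to-\infty$ locally uniformly there in the non-topological setting. Hence the total mass splits as a sum of local masses over the points of $\cS_u$:
\[
2N_{\tot}=\sum_{q\in\cS_u}M_q .
\]

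\emph{Step 3: a lower bound on every other blow-up point.} Let $q\ne p_i$ be another blow-up point. If $q$ is regular, it carries at least one core, and that core has mass $\ge4$ (a Liouville bubble has mass $4$, a Chern--Simons bubble more). If $q=p_j$ is a vortex, then:
\begin{itemize}
\item in the simple case, the singular bubble has mass $4(1+N_j)\ge4$, or more in the Chern--Simons case;
\item in the non-simple case, Step 1 gives $M_{j,k'}>4N_j\ge4$ wherever $N_j\ge1$.
\end{itemize}
The case $0<N_j<1$ needs a separate check: then $\mathscr A_j$ is empty, so a vortex with $N_j<1$ must be simple, and the singular profile's mass still exceeds $4$. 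Consequently,
\[
2N_{\tot}-M_{i,k}=\sum_{q\ne p_i}M_q ,
\]
which is either $0$ (when $p_i$ is the only blow-up point) or at least $4$. This is \eqref{eq:necessary-alternative}. The final claim follows by contraposition: under \eqref{eq:ratio-free-four-unit-gap}, no pair $(i,k)$ satisfies \eqref{eq:necessary-alternative}, so no non-simple cluster can occur.

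\emph{Main obstacle.} The delicate step is Step 2: justifying that there is no mass in necks and none in the smooth region, uniformly across the multiscale structure. In particular, we must ensure that the local mass $M_{p_i}$ computed by the Pohozaev argument coincides with the full limit of the mass in a fixed small disk. I would take the no-neck-loss identity and the exterior decay from the setting hypotheses as given. The remaining work is to verify that the lower bound of $4$ for each other blow-up point holds in both profile regimes, using the classification of masses quoted in the introduction.
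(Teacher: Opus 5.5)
Your treatment of the vortex points is essentially the paper's argument. Proposition~\ref{prop:polygon} gives $k=S-1\in\mathscr A_i$ and $M_{p_i}=M_{i,k}$. That proposition also excludes diffuse neck mass at scales between $\de$ and $1$, which is the real content behind your ``delicate'' Step~2. The total mass $2N_{\tot}$ from \eqref{eq:global-mass} is then split over $\cS_u$. Your case analysis in Step~3 is correct but unnecessary: the bound $M_q\ge4$ for every blow-up point is part of the concentration hypothesis \eqref{eq:measure-limit}, and the paper uses exactly that.

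\textbf{The gap is in your final sentence.} The conclusion ``every blow-up point is simple'' also covers regular blow-up points $q\notin\mathcal Z$. Your contrapositive only rules out non-simple clusters at the vortices $p_i$, and condition \eqref{eq:ratio-free-four-unit-gap} says nothing about regular points. You need a separate argument, independent of the gap condition, that a regular blow-up point satisfies \eqref{eq:simple} with $n_q=0$ and a moving local maximum $\xi_{\eps,q}$.

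The paper supplies this as follows.
\begin{enumerate}
\item Work in a vortex-free conformal chart with a smooth $\Gamma$ satisfying $\Delta\Gamma=e^{\varphi}/|M|_g$.
\item Set $V_\eps=w_\eps-4\pi N_{\tot}\Gamma$, $A=e^{u_0+4\pi N_{\tot}\Gamma}$ (so that $\Delta\log A=0$), $B=e^{\varphi}$ and $\tau=\eps$.
\item Lemma~\ref{lem:local-regular-core} then applies, with the harmonic oscillation bounded by the global Green formula.
\item Its logarithmic estimate \eqref{eq:local-regular-log} has local mass tending to at least $4$, hence eventually exceeding $2$. With $t=|x-x_j|^2e^{V_\eps(x_j)}$ this gives $V_\eps(x)+2\log|x-x_j|\le\log t-\tfrac{\widetilde\mu_j}{2}\log(1+t)+C\le C$.
\item Adding back the bounded term $4\pi N_{\tot}\Gamma$ and moving the center to a nearby local maximum of $w_\eps$ yields simplicity.
\end{enumerate}
Alternatively, you could cite the known simplicity of regular blow-up points \cite{ChoeKim,LinYanAIHP}. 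Either way, some such step must be supplied before the final claim follows.
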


An additional blow-up point has mass at least four, which excludes
a positive remainder below four in either regime without a further
scale-rate assumption. The zero-remainder alternative is attained
below: two cores require $N_{\tot}=4N_i$, and the construction has
$N_i=2$, $N_{\tot}=8$.

\subsection{Existence of a non-simple Chern--Simons cluster}

Our first existence result realizes the resonant finite-height configuration
on the square flat torus. Let
\(
 \T=\R^2/\mathbb Z^2,\) \(a=(1/2,0),\) \( e=(0,1).
\)
Let \(G\) be its Green function with normalization
\begin{equation}\label{cs:eq:green}
 -\Delta_xG(x,p)=\delta_p-1,\qquad \int_\T G(x,p)\,dx=0,
\end{equation}
and use the specialization of the background in \eqref{eq:u0-w},
\begin{equation}\label{cs:eq:u0}
 u_0=-8\pi G(\cdot,0)-24\pi G(\cdot,a).
\end{equation}
The vortex configuration \(2[0]+6[a]\) is fixed throughout the construction.
Let \(U\) be the radial entire profile centered at the origin with
\[
 \Delta U+F(U)=0\quad\text{in }\R^2,\qquad U<0,\qquad
 \frac1{2\pi}\int_{\R^2}F(U)\,dx=8.
\]
\begin{theorem}[Non-simple blow-up of Chern--Simons type on the square torus]\label{cs:thm:existence}
There is a constant $d_*>0$ such that, for every sufficiently small $\eps>0$,
there is a distributional solution
\begin{equation}\label{cs:eq:target}
 \Delta u_\eps+\eps^{-2}F(u_\eps)
       =8\pi\delta_0+24\pi\delta_a\quad\hbox{on }\T.
\end{equation}
It satisfies $u_\eps-u_0\in C^\infty(\T)$ and is invariant under
$(x,y)\mapsto(-x,y)$ and $(x,y)\mapsto(x,-y)$.
There are $\gamma>0$ and local maxima
$q_{\eps,\pm}=\pm\delta_\eps e$ such that
\begin{align}
 &\sup_\T u_\eps\le-\gamma,\qquad
 \delta_\eps=d_*\eps^{2/3}(1+o(1)),\label{cs:eq:scale}\\
 &u_\eps(q_{\eps,\pm}+\eps z)\longrightarrow U(z)
             \quad\hbox{in }C^2_{\rm loc}(\R^2),\label{cs:eq:profiles}\\
 &\eps^{-2}F(u_\eps)\dd\rightharpoonup32\pi\delta_0.\label{cs:eq:measure}
\end{align}
Each limiting core has mass $16\pi$, and there is no missing mass.
For $w_\eps=u_\eps-u_0-2\log\eps$,
\[
 w_\eps\longrightarrow-\infty\quad\hbox{locally uniformly on }\T\setminus\{0\},
 \qquad
 w_\eps(q_{\eps,+})+6\log|q_{\eps,+}|\longrightarrow+\infty .
\]
In particular, this is non-simple blow-up of finite-height
Chern--Simons type at the strength-two vortex.
\end{theorem}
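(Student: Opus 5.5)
The plan is a Lyapunov--Schmidt reduction in the class of functions invariant under $(x,y)\mapsto(\pm x,\pm y)$. I will work with the single scalar parameter $\delta$, the half-separation of two finite-height cores placed at $\pm\delta e$.

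\emph{Bookkeeping.} The cluster structure result preceding Theorem~\ref{thm:ratio-free-four-unit-gap} fixes the data. With $N_i=2$ and $S=2$ it gives a common core mass $m=8$. Hence each core is the radial profile $U$ with $\frac1{2\pi}\int F(U)=8$. The cluster mass is $M_{i,1}=16=2N_{\tot}$, so no mass is left for the vortex at $a$ or elsewhere. This zero-remainder balance is automatic: integrating \eqref{cs:eq:target} over $\T$ gives $\eps^{-2}\int_\T F(u_\eps)=32\pi$ for any solution.

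\emph{Approximate solution.} On the inner scale $\eps$ I will take
\[
\mathcal U_\delta(x)=U\bigl((x-\delta e)/\eps\bigr)+U\bigl((x+\delta e)/\eps\bigr)+h_{\eps,\delta}(x),
\]
glued outside $B_{\sqrt\eps}(\pm\delta e)$ to the outer function $u_0$ plus its Green's corrections. The correction $h_{\eps,\delta}$ has two jobs. It makes the tail $-(4+\dots)\log|x-q_\pm|$ of each core match the logarithmic growth forced by $u_0$. It also accounts for the interaction of each core with the singular source $4\log|x|$ and with the other core. I would design $h$ using the center-shift asymptotics, the second Fourier correction and the moment formulas of Theorem~\ref{prop:refined-profile}. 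This is exactly the information needed to make the error small in a weighted norm, of size $o(1)$ relative to the reduced force.

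\emph{Linear theory.} The linearization of $\Delta U+F(U)=0$ at the finite-height radial profile is nondegenerate in the sense of \cite{CKL,LinYanAIHP}: its bounded kernel is spanned by $\partial_1U$ and $\partial_2U$. The symmetry $(x,y)\mapsto(-x,y)$ kills $\partial_1U$ at both cores. The symmetry $(x,y)\mapsto(x,-y)$ ties $\partial_2U$ at $q_+$ to $-\partial_2U$ at $q_-$, leaving one approximate kernel direction $Z_\delta$. I would prove uniform invertibility of the linearized operator on the $L^2$-orthogonal complement of $Z_\delta$, using weighted $L^\infty$ norms adapted to the three scales $\eps\ll\delta\ll1$. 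The argument is the standard blow-up by contradiction: rescale at the cores, in the intermediate annuli $\eps\ll|x|\ll\delta$ and $\delta\ll|x|\ll 1$, and on the outer region. On $\T$ one must also check the zero mode of $\Delta$. The mean condition is handled by the total-mass identity, so no extra constant parameter is needed. A contraction mapping then solves the projected problem with a remainder bounded by the approximation error.

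\emph{Reduced problem (main obstacle).} It remains to choose $\delta$ so that the coefficient of $Z_\delta$ vanishes. Testing the equation against $Z_\delta$ is equivalent to a local Pohozaev identity on a disk around $q_+$. I expect an expansion of the form
\[
c(\delta)=A\,\frac{\eps^2}{\delta^3}-B+o(1),\qquad A,B>0,
\]
after dividing by the natural power. The term $A\eps^2/\delta^3$ comes from the second-order (moment) correction of the finite-height core interacting with the $\pm$ configuration. The term $B$ comes from the combined attraction of the vortex source and the partner core. This expansion yields $\delta_\eps=d_*\eps^{2/3}(1+o(1))$ with $d_*^3=A/B$. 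Since $c$ changes sign across $d_*\eps^{2/3}$, the intermediate value theorem produces a zero.

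The hard part is computing $A$ and $B$ exactly and showing the error is genuinely $o(1)$. Here the precise moment formulas from Appendix~\ref{app:pointwise-proofs} are essential, since the leading terms of the naive expansion cancel by the resonance $m(S-1)=4N_i$.

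\emph{Conclusions.}
\begin{itemize}
\item The profile convergence \eqref{cs:eq:profiles} follows from the smallness of the remainder.
\item The bound $\sup u_\eps\le-\gamma$ follows from $\sup U<0$ and the decay of the outer part.
\item The measure limit \eqref{cs:eq:measure} follows from the mass count.
\item Non-simplicity follows from $w_\eps(q_{\eps,+})\approx -u_0(q_{\eps,+})-2\log\eps\approx-4\log\delta_\eps-2\log\eps$. Then $w_\eps(q_{\eps,+})+6\log\delta_\eps\approx2\log(\delta_\eps/\eps)\to\infty$, since $\delta_\eps/\eps\to\infty$.
\end{itemize}
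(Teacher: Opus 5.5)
Your framework is the paper's: reduction in the doubly even class, one projection direction $K_+-K_-$, a multiscale contradiction argument with the radial mode controlled by integrating over $\T$, a Pohozaev/stress reduction, and the intermediate value theorem. The gap is in the reduced equation, which is the whole existence mechanism, and you have misidentified it.

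\emph{The $O(\delta)$ term.} The vortex--core force $4/\delta$ and the partner-core force $-4/\delta$ cancel exactly; this is the resonance $m(S-1)=4N_i$. Point sources exert no higher-order forces on one another, so your $B$ cannot come from them. The surviving $O(\delta)$ force is the tidal field of the \emph{global} data. The Hessian at $0$ of the smooth background (regular torus Green part plus the massless strength-six vortex at $a$) is $24\pi\{D^2R(0)-D^2G(a,0)\}=\operatorname{diag}(-\lambda,\lambda)$, with $\lambda=24\pi(G_{xx}(a,0)-\tfrac12)$. Showing $\lambda>0$ requires the Fourier-series bound $G_{xx}(a,0)>\pi/2$. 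For cores placed along $(1,0)$ the tidal coefficient is $-\lambda$, and the two competing terms then have the same sign. Your plan never uses $a$ or the square torus, so it has no source for a sign change.

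\emph{The opposing term.} It is also not what you describe. The isotropic second moment of a core (size $\eps^2$) is invisible to harmonic fields. The decisive quantity is the anisotropic quadrupole $M_{+,2}=-2\mathfrak b\eps^4/\delta^2$. It is induced by the tidal coefficient $-\delta^{-2}$ through the regular $k=2$ mode $\Phi$, and $\mathfrak b>0$ is proved from the monotonicity of $\Phi/(-U')$. Pairing it with the cubic coefficient $\delta^{-3}$ of the incident logarithms and with the partner's quadrupole gives the force $\lambda\delta-\mathfrak b\eps^4/(2\delta^5)$. After dividing by $\delta$ this is $\lambda-\tfrac{\mathfrak b}{2}(\eps^2/\delta^3)^2$, quadratic rather than linear in $\eps^2/\delta^3$. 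Your form reproduces the exponent $2/3$ only because both forms vanish where $\eps^2/\delta^3\asymp1$; the actual constant is $d_*=(\mathfrak b/(2\lambda))^{1/6}$.

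\emph{The computational route.} Your route would not reach this order either. The appendix estimates carry errors $O(\eta^{2+\sigma})$ in first-scale units, whereas the quadrupole there is $O(\eta^4)$ with $\eta=\eps/\delta$. They are also a priori statements about actual solutions, not tools for building an ansatz. With a residual of size $\eta^2$, the fixed point gives only $|t|\le C\eta^2$, while the reduced coefficient is of size $\eps\delta\asymp\eps^{5/3}$. Projecting an ansatz error would therefore demand an approximation far more accurate than the one you sketch.

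The paper instead keeps a crude ansatz and evaluates the exact stress flux of the projected solution on $|x-\zeta_+|=4r_c\delta$ through its Green representation. It recenters each core at its center of mass and bounds the moments of order $\ge3$ crudely. It computes $M_{j,2}$ directly from the mode-two ODE of the projected solution, with boundary value fixed by the incident field.

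Finally, gluing outside $B_{\sqrt\eps}(\pm\delta e)$ is inconsistent, since $\sqrt\eps\gg\delta\asymp\eps^{2/3}$: those balls overlap and contain the vortex. The matching radius must be a small multiple of $\delta$.
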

The two core widths are of order $\eps$, whereas their distance is
asymptotic to $2d_*\eps^{2/3}$, so the cores remain distinct on their
own scales. Their combined mass exhausts the total mass and realizes
the zero-remainder alternative in \eqref{eq:necessary-alternative}.
Among configurations with positive integer multiplicities $N_i$, the
example has the smallest total vorticity compatible with a finite-height
non-simple cluster: $N_{\tot}=8$. Indeed, $1\le k<N_i$ then implies
$N_i\ge k+1$, hence $M_{i,k}\ge4(k+1)^2/k\ge16$ and $2N_{\tot}\ge16$.
The two-core configuration is also compatible with the quadratic
mass identity in \cite[Theorem~1.2]{LiLiu}: for normalized core masses
$b_1=b_2=8$ at a strength-two vortex, that identity reads
\[
 b_1^2+b_2^2=(b_1+b_2)^2-4N_i(b_1+b_2),
 \qquad 8^2+8^2=16^2-8\cdot16.
\]
Thus the construction realizes an equality permitted by that identity.

To the best of our knowledge, Theorem~\ref{cs:thm:existence} provides
the first existence result for non-simple blow-up with finite-height
Chern--Simons profiles. The separation law distinguishes it from simple blow-up
described by one entire profile. Here the core width $\eps$ and the
half-separation $\delta$ satisfy $\eps/\delta\to0$: no single rescaling
both resolves the cores and keeps their centers at distinct finite
positions. Estimates must remain uniform as the centers merge, a
degeneration not addressed directly by constructions with distinct
limiting centers.

Moreover, the leading vortex--core and inter-core forces cancel.
Isotropic second moments vanish against the harmonic interaction field;
anisotropic moments supply the next stress contribution. With
$\delta=d\eps^{2/3}$, the reduced equation is
\[
 \lambda d-\frac{\mathfrak b}{2d^5}+o(1)=0,
 \qquad \lambda>0,\quad \mathfrak b>0.
\]
The torus Green function determines $\lambda$, and the linearized
second angular mode determines $\mathfrak b$; see
\eqref{cs:eq:lambda} and \eqref{cs:eq:beta}. Uniform control of the
collapsing radial and logarithmic modes, followed by the continuous
translation reduction, gives a solution for every small $\eps$ and
$d_*=(\mathfrak b/(2\lambda))^{1/6}$.
\subsection{Existence of a non-simple mean-field cluster}

To complement the finite-height construction, we realize the mean-field
endpoint with two cores. This second result concerns the same scalar
nonlinearity on the
Euclidean unit disk \(B_1\subset\R^2\), with one fixed strength-one vortex:
\begin{equation}\label{mf:eq:CS}
\Delta u+\eps^{-2}F(u)=4\pi\delta_0
\qquad\text{in }B_1\subset\R^2.
\end{equation}
We identify \(\R^2\) with \(\C\) by \(z=x+iy\). For a sequence with couplings
\(\eps_k\to0\), write
\(w_k=u_k-2\log\eps_k-2\log|z|.\)
In this setting non-simplicity at the origin means that, for every
sufficiently small fixed $r>0$,
\begin{equation}\label{mf:eq:nonsimplicity}
 \limsup_{k\to\infty}\sup_{0<|z|<r}
       \{w_k(z)+4\log|z|\}=+\infty.
\end{equation}
The Dirichlet traces are part of the construction and are specified
in the theorem.
\begin{theorem}[Non-simple blow-up of mean-field type on the unit disk]\label{mf:thm:main}
There exist $s_k\downarrow0$, $\eps_k\downarrow0$, and solutions $u_k$
of \eqref{mf:eq:CS}, with $u_k-2\log|z|\in C^\infty(\overline{B_1})$,
having the following properties:
\begin{enumerate}
\item The prescribed boundary values are
\begin{equation}\label{mf:eq:boundary}
u_k(e^{i\theta})=
\log\frac{32\eps_k^2s_k^4}
{\bigl(s_k^4+1+s_k^2-2s_k\cos(2\theta)\bigr)^2}.
\end{equation}
Their oscillation is $O(s_k)$.
\item $\sup_{B_1}u_k\to-\infty$.
\item With
\(
D_k=\eps_k^{-2}F(u_k),
\)
we have $D_k\dd\rightharpoonup16\pi\delta_0$ as finite measures
on $\overline{B_1}$.
\item Put $q_{\pm,k}=\pm\sqrt{s_k}$ and $\ell_k=s_k^{3/2}/2$.
For each sign there is a local maximum of $u_k$ at
$q_{\pm,k}+o(\ell_k)$, and
$
\lim_{R\to\infty}\lim_{k\to\infty}
\int_{B_{R\ell_k}(q_{\pm,k})}D_k\dd=8\pi.
$
The origin is the unique blow-up point of $w_k$ and is non-simple
in the sense of \eqref{mf:eq:nonsimplicity}.
\end{enumerate}
\end{theorem}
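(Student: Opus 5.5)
The plan is to perturb an explicit singular Liouville solution, and the boundary trace \eqref{mf:eq:boundary} is chosen to match it exactly. For $0<s<1$ put
\[
 f_s(z)=\frac{z^2-s}{s^2},\qquad
 V_s(z)=\log\frac{8|f_s'(z)|^2}{(1+|f_s(z)|^2)^2}
 =\log\frac{32s^4|z|^2}{\bigl(s^4+|z^2-s|^2\bigr)^2}.
\]
By Liouville's formula, $\Delta V_s+e^{V_s}=4\pi\delta_0$ in $B_1$; the term $4\pi\delta_0$ comes from the simple zero of $f_s'$ at the origin.

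On $|z|=1$ we have $|z^2-s|^2=1+s^2-2s\cos 2\theta$. Hence $V_s+2\log\eps$ coincides with the right-hand side of \eqref{mf:eq:boundary}, and its oscillation there is $O(s)$.

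The mass $e^{V_s}dx$ concentrates at the two zeros $\pm\sqrt s$ of $f_s$. Near $z=\pm\sqrt s$ we have $|f_s'|\simeq 2s^{-3/2}$, so $f_s$ is a dilation by $1/\ell$ with $\ell=s^{3/2}/2$, composed with a rotation. Each core is therefore a standard $8\pi$ Liouville bubble of width $\ell$, and the total mass is $16\pi$: the map $f_s$ covers the sphere twice, up to the small preimage of the complement of $f_s(B_1)$, which is a disk of radius $\sim s^{-2}$.

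I look for $u=V_s+2\log\eps+\phi$ with $\phi=0$ on $\partial B_1$. Since $\eps^{-2}e^u=e^{V_s+\phi}$, the equation \eqref{mf:eq:CS} becomes
\[
 \Delta\phi+e^{V_s}\phi=-e^{V_s}\bigl(e^\phi-1-\phi\bigr)+\eps^2e^{2V_s+2\phi}.
\]
The error term is $\eps^2e^{2V_s}$, with $\sup e^{V_s}\sim 32s^{-3}$ attained at the cores. In the weighted norm $\|h\|_*=\sup|h|/(e^{V_s}+1)$ its size is $O(\eps^2s^{-3})$. I will couple $\eps_k$ to $s_k$ by taking $\eps_k^2\le s_k^{A}$ for a large $A$ fixed below. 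I also impose the symmetries $z\mapsto -z$ and $z\mapsto\bar z$. Both are respected by $f_s$, since $f_s(-z)=f_s(z)$ and $f_s(\bar z)=\overline{f_s(z)}$, so they are inherited by $V_s$ and by the whole problem.

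The main step, and the expected obstacle, is invertibility of $L_s=\Delta+e^{V_s}$ with Dirichlet data on symmetric functions, with an explicit polynomial bound $\|\phi\|_\infty\le Cs^{-B}\|L_s\phi\|_*$. I would pull back through the branched double cover $w=f_s(z)$. Because the equation $\Delta\phi+e^{V_s}\phi=h$ is conformally covariant, $L_s$ becomes the spherical operator $\Delta_w+8(1+|w|^2)^{-2}$ on the large disk-like domain $f_s(B_1)$. The evenness in $z$ is exactly what makes functions descend to this single-sheeted picture, apart from the branch point $w=-1/s$, which is harmless because it lies outside the cores.

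The bounded kernel on $\R^2$ consists of
\[
 \frac{1-|w|^2}{1+|w|^2},\qquad \frac{\mathrm{Re}\,w}{1+|w|^2},\qquad \frac{\mathrm{Im}\,w}{1+|w|^2}.
\]
Conjugation symmetry removes the $\mathrm{Im}\,w$ element. The two survivors do not vanish on $\partial f_s(B_1)$. The radial one tends to $-1$ there, and $\mathrm{Re}\,w/(1+|w|^2)$ is of size about $s^{2}$ there, which is small but nonzero, so the Dirichlet problem is nondegenerate.

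I would prove the bound by the usual contradiction and blow-up argument. Suppose $\phi_n$ is normalized with $\|L\phi_n\|_*=o(s_n^{B})$. Rescaling at a core gives, in the limit, a kernel element of the entire problem; the Dirichlet condition forces its coefficients to be $O(s^{2})$ times the norm; and a maximum-principle and Green-representation argument on the exterior region $|w|\gtrsim R$ then yields a contradiction once $B$ is large, for instance $B=3$. I would also track the radial mode explicitly by testing against $(1-|w|^2)/(1+|w|^2)$.

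With this estimate, a contraction in the ball $\|\phi\|_\infty\le \eps^2 s^{-3-B}\log(1/s)$ follows for $A>2(3+B)$, since the quadratic term is then subordinate to the linear one. The conclusions then read off directly:
\begin{enumerate}
\item $\sup u_k\le \log(32\eps_k^2s_k^{-3})+o(1)\to-\infty$;
\item $D_k=e^{V_s+\phi}\bigl(1-\eps^2e^{V_s+\phi}\bigr)$ converges weakly to $16\pi\delta_0$, because the correction factor is $1+o(1)$ uniformly and $\phi=o(1)$;
\item on each core, in the variable $(z-q_{\pm,k})/\ell_k$, $V_s+\phi$ converges to $\log 8/(1+|\zeta|^2)^2$, which gives mass $8\pi$ and a nondegenerate maximum at $q_{\pm,k}+o(\ell_k)$;
\item at $z=\sqrt{s_k}$, $w_k+4\log|z|=V_s+\phi-2\log|z|+4\log|z|\approx \log(32 s^{-3})+\log s\to+\infty$, which is the non-simplicity \eqref{mf:eq:nonsimplicity}.
\end{enumerate}
Uniqueness of the blow-up point holds because $e^{V_s}$ is bounded on compact subsets of $\overline{B_1}\setminus\{0\}$.
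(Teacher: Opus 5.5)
Your route is genuinely different from the paper's, and most of it is correct. The seed $V_s=2\log|z|+v_s$, the trace, the reduced equation for $\phi$, the residual $\eps^2e^{2V_s}$ of size $O(\eps^2s^{-3})$ in $\|\cdot\|_*$, the symmetry reduction through $w=f_s(z)$, and the read-off of (1)--(4) all check out.

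\textbf{The gap.} It lies in the step you flag as the crux, in two places.

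First, the sentence ``the two surviving kernel elements do not vanish on $\partial f_s(B_1)$, so the Dirichlet problem is nondegenerate'' does not follow. A Dirichlet kernel element need not be a bounded entire one; the paper, working without symmetry, excludes degeneracy only off a discrete set $\mathcal E_D$.

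Second, the mechanism you describe for the contradiction argument fails for the translation mode. Since $\mathrm{Re}\,w/(1+|w|^2)$ is only $O(s^2)$ on the boundary, the Dirichlet condition does not force its coefficient to be small. Concretely:
\begin{enumerate}
\item On even functions, the pulled-back problem is $-\Delta_{S^2}-2$ on the sphere minus a cap $\mathcal C$ of radius $\rho\approx2s^2$.
\item The modes of angular frequency one about the center of $\mathcal C$ have Dirichlet eigenvalue $2+\nu$ with $0<\nu\asymp s^4$.
\item The corresponding symmetric eigenfunction $\phi$ satisfies $\Delta\phi+e^{V_s}\phi=-\tfrac{\nu}{2}e^{V_s}\phi$. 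Hence $\|L_s\phi\|_*\le\tfrac{\nu}{2}\|\phi\|_\infty$ and $\|L_s^{-1}\|_{*\to\infty}\gtrsim s^{-4}$.
\end{enumerate}
So $B=3$ is impossible. In a contradiction sequence the coefficient of $\mathrm{Re}\,w/(1+|w|^2)$ can vanish only because the forcing is small, tested against this near-kernel. That requires $B\ge4$ (up to logarithms) and a genuine lower bound $\nu\gtrsim s^4$. One way to get it is an explicit $k=1$ ODE analysis after a M\"obius recentering that turns $f_s(B_1)$ into a centered disk of radius $\approx s^{-2}$.

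\textbf{A cheaper repair.} Drop the polynomial bound altogether. Since $\eps_k$ is chosen after $s_k$, it suffices that $L_s$ be invertible on the symmetric class for each fixed small $s$. Then apply the implicit function theorem with $s$-dependent constants and choose $\eps_k$ diagonally.

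On even functions, invertibility holds for every small $s$. A kernel element would be a Dirichlet eigenfunction with eigenvalue $2$ on $S^2\setminus\overline{\mathcal C}$. But $\lambda_1<2$ for a small cap. Also $\lambda_j>2$ for $j\ge2$: otherwise the span of the first $j$ Dirichlet eigenfunctions, extended by zero, would contain a function orthogonal to constants with Rayleigh quotient at most $2$. That function would be a restricted linear function vanishing on $\mathcal C$, hence zero.

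\textbf{Comparison with the paper.} Here your route improves on the paper's. The paper imposes no symmetry, so it must use analytic Fredholm theory in $s$ and pick $s_k$ off $\mathcal E_D$. Your symmetry removes that genericity step. The full $s^{-4}$-level linear theory would even give solutions for all small $s$ and all $\eps^2\le s^A$, which the paper explicitly does not claim.

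\textbf{Minor points.}
\begin{enumerate}
\item In (3) the core limit needs the normalization $+2\log\ell_k$.
\item Uniqueness of the blow-up point uses $e^{V_s}=O(s^4)$ on compact sets away from $0$, not mere boundedness.
\end{enumerate}
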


\begin{remark}\label{mf:rem:scope}
The traces in \eqref{mf:eq:boundary} vary with $k$.
The coupling sequence is chosen diagonally; the theorem asserts neither
a universal power law between $\eps_k$ and $s_k$ nor existence for every
sufficiently small coupling with these traces.
\end{remark}

Sections~\ref{sec:setting}--\ref{sec:cluster} develop the cluster
structure; Section~\ref{sec:pointwise} states the quantitative estimates,
proved with their moment formulas in Appendix~\ref{app:pointwise-proofs}.
Sections~\ref{sec:cs-construction} and~\ref{sec:mf-construction}
give the periodic and disk constructions.

\section{Geometric setting and profile regimes}\label{sec:setting}
We use the notation and definition of blow-up introduced in
Section~\ref{sec:introduction}. For the a priori analysis, consider
a sequence of solutions of \eqref{eq:main} satisfying
\( \limsup_{\eps\to0}\sup_Mu_\eps<0.
\)
We assume the concentrating alternative of \cite{ChoeKim,LinYanAIHP}:
$\cS_u$ is finite and nonempty, and, after extraction,
\begin{gather}
 \mu_\eps:=\frac{F(u_\eps)}{2\pi\eps^2}\,dV_g
 \rightharpoonup\sum_{q\in\cS_u}M_q\delta_q,\qquad M_q\ge4,
 \label{eq:measure-limit}\\
 w_\eps\longrightarrow-\infty
 \quad\text{locally uniformly on }M\setminus\cS_u.\notag
\end{gather}
Integration of \eqref{eq:main} gives
\begin{equation}\label{eq:global-mass}
 \mu_\eps(M)=\sum_{q\in\cS_u}M_q=2N_{\tot}.
\end{equation}
With $\overline u_\eps=|M|_g^{-1}\int_Mu_\eps\,dV_g$, the exact
Green representation in this normalization is
\begin{equation}\label{eq:global-green-representation}
 u_\eps(x)-u_0(x)=\overline u_\eps
                +2\pi\int_M G_g(x,y)\,d\mu_\eps(y).
\end{equation}
Indeed, $-\Delta_g(u_\eps-u_0)=\eps^{-2}F(u_\eps)
-4\pi N_{\tot}/|M|_g$, and $u_0$ has zero average.
For an iterated error $o_R(1)+o_\eps(1)$, the limit in $\eps$ is
taken first at fixed $R$, followed by $R\to\infty$, unless an
explicit uniform bound is stated.

All masses $m$, $\widetilde m$, and $M_q$ are normalized by $2\pi$;
$M$ denotes the surface. A tilde marks actual finite-disk masses,
while $m(\alpha)$ is the profile mass at height $\alpha$.
The small exponent $\sigma$ is used in H\"older norms and sublinear weights.
Regular blow-up points are simple \cite{ChoeKim,LinYanAIHP}.
The finite-height hypothesis is used in Section~\ref{sec:pointwise}
and Appendix~\ref{app:pointwise-proofs}; Section~\ref{sec:cluster}
covers both regimes.

\subsection{Local coefficients}
At $p_i$, choose a conformal normal coordinate with
\begin{equation}\label{eq:isothermal}
 g=e^{\varphi_i(x)}|dx|^2,\quad \varphi_i(0)=0,\quad
 \nabla\varphi_i(0)=0,\quad D^2\varphi_i(0)=-K_g(p_i)I_2.
\end{equation}
The harmonic quadratic part is removed by a holomorphic coordinate
change. Write $\gamma_i(x)=G_g(x,p_i)+(2\pi)^{-1}\log|x|$.
We fix the normalization of the smooth positive coefficient $h_{i,g}$ by
\begin{equation}\label{eq:explicit-fixed-coefficient}
 \log h_{i,g}=4\pi(N_{\tot}-N_i)\gamma_i
               -4\pi\sum_{j\ne i}N_jG_g(\cdot,p_j).
\end{equation}
For the limiting masses in \eqref{eq:measure-limit}, define the field
of the other blow-up points by
\begin{equation}\label{eq:other-core-harmonic-field}
 \Psi_i(x)=2\pi\sum_{q\in\cS_u\setminus\{p_i\}}M_q
 \bigl[G_g(x,q)-\gamma_i(x)-G_g(p_i,q)+\gamma_i(p_i)\bigr].
\end{equation}
Thus $\Psi_i(p_i)=0$; an empty sum is zero. On a chart containing no
other blow-up point, each bracket is smooth and harmonic, because
$\Delta G_g(\cdot,q)=\Delta\gamma_i=e^{\varphi_i}/|M|_g$.
Set
\begin{equation}\label{eq:fixed-K}
 \mathcal K_i=\log h_{i,g},\qquad
 \mathcal K_i^{\comp}=\mathcal K_i+\Psi_i.
\end{equation}
Both coefficients are harmonic locally. All derivatives of this fixed
field are bounded on a smaller chart; its additive normalization
cancels in the rescaled equation.

\subsection{Finite-height profiles}
Let $U_\alpha$ be the radial non-topological entire solution
\begin{equation}\label{eq:entire}
 \Delta U_\alpha+F(U_\alpha)=0
 \quad\text{in }\R^2,\qquad U_\alpha(0)=\alpha<0,
 \quad U_\alpha'(0)=0,
\end{equation}
and set
\begin{equation}\label{eq:mass-map}
 m(\alpha)=\frac1{2\pi}\int_{\R^2}F(U_\alpha)\,dz.
\end{equation}
The profile family satisfies
\begin{equation}\label{eq:profile-facts}
 m(\alpha)>4,\qquad m'(\alpha)>0\qquad(\alpha<0).
\end{equation}
The family depends $C^1$-smoothly on $\alpha$, and $m$ maps
$(-\infty,0)$ onto $(4,\infty)$;
see \cite{CFL} and \cite[pp.~840--841]{CKL}. In the notation of
\cite[Lemma~2.4]{CKL}, with vortex number zero, their height $s$ is
our $\alpha$ and their total mass is $2\pi m(\alpha)$.
The negative logarithmic coefficient of $\partial_\alpha U_\alpha$
is $-m'(\alpha)$; that lemma gives $m'(\alpha)>0$.
\begin{assumption}[Finite-height cluster]\label{ass:profile}
The cluster considered in Section~\ref{sec:pointwise} is of
Chern--Simons type, with normalized heights converging to
$\beta\in(-\infty,0)$.
\end{assumption}
After choosing $I\Subset(-\infty,0)$ containing these heights, fix
\begin{equation}\label{eq:profile-hyp}
 4<m_*<\inf_{\alpha\in I}m(\alpha),\qquad
 \inf_{\alpha\in I}m'(\alpha)>0.
\end{equation}
The strict margin in $m_*$ also controls the convergent finite-disk
masses. Constants below are uniform for $\alpha\in I$.

\section{Entire profiles and the linearized operator}\label{sec:linearized}
Put
\begin{equation}\label{eq:linearized-operator}
 L_\alpha=\Delta+F'(U_\alpha).
\end{equation}
The radial equation gives
\begin{equation}\label{eq:radial-monotonicity}
 (rU_\alpha')'=-rF(U_\alpha),\qquad U_\alpha'(r)<0\quad(r>0).
\end{equation}
With $m=m(\alpha)$ and $c_\infty=c_\infty(\alpha)$, we have as $r\to\infty$,
\begin{align}
 U_\alpha(r)&=-m\log r+c_\infty+O(r^{2-m}), \ \
 U_\alpha'(r)=-m/r+O(r^{1-m}), \ \
 U_\alpha''(r)=m/r^2+O(r^{-m}).
 \label{eq:Usecond-asymptotic}
\end{align}
At the origin, writing $\kappa_\alpha=F(\alpha)>0$,
\begin{equation}\label{eq:kappa-alpha}
 D^2U_\alpha(0)=-\frac{\kappa_\alpha}{2}I_2,\qquad
 U_\alpha'(r)=-\frac{\kappa_\alpha}{2}r+O(r^3).
\end{equation}

\begin{lemma}[Sublinear kernel and normalization]
\label{thm:linearized-classification}
If $L_\alpha\phi=0$ in $\R^2$ and
$|\phi(x)|\le C(1+|x|)^\tau$ for some $0<\tau<1$, then
\begin{equation}\label{eq:kernel-classification}
 \phi=c_0\partial_\alpha U_\alpha
       +c_1\partial_{x_1}U_\alpha+c_2\partial_{x_2}U_\alpha.
\end{equation}
In particular, $\phi(0)=0$ and $\nabla\phi(0)=0$ imply $\phi=0$.
\end{lemma}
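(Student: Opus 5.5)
We decompose $\phi$ in Fourier modes around the origin and handle each radial ODE separately, using the growth bound to discard the non-admissible solutions. Write $\phi(r,\theta)=\sum_{k\ge0}\bigl(a_k(r)\cos k\theta+b_k(r)\sin k\theta\bigr)$. Since $F'(U_\alpha)$ is radial, each coefficient $f=a_k$ or $b_k$ solves
\[
 f''+\frac1r f'-\frac{k^2}{r^2}f+F'(U_\alpha(r))f=0,\qquad |f(r)|\le C(1+r)^\tau .
\]
By \eqref{eq:Usecond-asymptotic}, $F'(U_\alpha)=e^{U_\alpha}(1-2e^{U_\alpha})=O(r^{-m})$ with $m>4$. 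Hence at infinity each ODE is a perturbation of the Euler equation. Its solutions behave like $r^{\pm k}$ for $k\ge1$, and like $1$ or $\log r$ for $k=0$, with errors of relative size $O(r^{2-m})$. This follows by variation of parameters, since $\int^\infty r\cdot r^{-m}\,dr<\infty$.

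\textbf{Modes $k\ge2$.} At the origin, regularity forces $f\sim c\,r^k$. At infinity, the growth bound $\tau<1<k$ excludes $r^k$, so $f\sim c'r^{-k}$. We show $f\equiv0$ by comparison with the kernel element $U_\alpha'(r)$, which is the $k=1$ solution. It satisfies $U_\alpha'<0$ on $(0,\infty)$ by \eqref{eq:radial-monotonicity}, and behaves like $-\tfrac{\kappa_\alpha}{2}r$ near $0$ and like $-m/r$ near $\infty$. If $f\not\equiv0$, take a maximal interval $(r_1,r_2)\subset(0,\infty)$, possibly with $r_1=0$ or $r_2=\infty$, on which $f>0$ with $f(r_1)=f(r_2)=0$. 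Multiply the $k$-equation by $rU_\alpha'$ and the $k=1$ equation by $rf$, subtract, and integrate:
\[
 \bigl[r(f'U_\alpha'-fU_\alpha'')\bigr]_{r_1}^{r_2}=(k^2-1)\int_{r_1}^{r_2}\frac{fU_\alpha'}{r}\,dr<0 .
\]
On the left, the boundary terms at $0$ and $\infty$ vanish by the asymptotics: $r f'U_\alpha'=O(r^{k+1})$ near $0$ and $O(r^{-k-1})$ near $\infty$. At interior zeros the contribution is $r f'U_\alpha'$ with $f'(r_2)\le0$, $f'(r_1)\ge0$, and $U_\alpha'<0$, so the left side is $\ge0$. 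This is a contradiction. Hence $a_k=b_k=0$ for all $k\ge2$.

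\textbf{Mode $k=1$.} The admissible regular solution is unique up to scaling, namely $U_\alpha'$. The second solution is singular at $0$, like $r^{-1}$, so $a_1,b_1$ are multiples of $U_\alpha'$. Then $a_1\cos\theta+b_1\sin\theta=c_1\partial_{x_1}U_\alpha+c_2\partial_{x_2}U_\alpha$.

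\textbf{Mode $k=0$.} The solution regular at the origin is unique up to scaling, and $\partial_\alpha U_\alpha$ is such a solution. It is nontrivial since $\partial_\alpha U_\alpha(0)=1$. It grows like $-m'(\alpha)\log r$, which is compatible with the bound $(1+r)^\tau$. Hence $a_0=c_0\,\partial_\alpha U_\alpha$.

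This gives \eqref{eq:kernel-classification}; summing the modes is legitimate because $\phi$ is smooth and the Fourier series converges for each $r$.

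\textbf{Normalization.} If $\phi(0)=0$, then $c_0=0$, since $\partial_\alpha U_\alpha(0)=1$ while $\partial_{x_j}U_\alpha(0)=0$. If moreover $\nabla\phi(0)=0$, then by \eqref{eq:kappa-alpha} we have $\nabla\phi(0)=c_0\nabla\partial_\alpha U_\alpha(0)-\tfrac{\kappa_\alpha}{2}(c_1,c_2)$. The first term vanishes because $\partial_\alpha U_\alpha$ is radial, and $\kappa_\alpha>0$, so $c_1=c_2=0$.

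The main obstacle is the exclusion of the modes $k\ge2$. The Sturm comparison above handles it cleanly, provided the boundary terms at $0$ and $\infty$ are checked using the precise asymptotics \eqref{eq:Usecond-asymptotic} and the decay $f=O(r^{-k})$. That decay is where the hypotheses $m>4$ and $\tau<1$ enter.
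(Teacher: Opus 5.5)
Your proposal is correct and follows the paper's proof: both use the angular-mode decomposition, with $\partial_\alpha U_\alpha$ and $U_\alpha'$ supplying the modes $k=0,1$, and for $k\ge2$ a Wronskian identity against $U_\alpha'$ (the paper's $(r\psi^2(h/\psi)')'=(k^2-1)r^{-1}\psi h$ with $\psi=-U_\alpha'$ is your identity up to sign). The one difference is in how that identity is used for $k\ge2$: you first use sublinearity to force $r^{-k}$ decay and then integrate over a nodal interval, whereas the paper integrates from the origin to show that every nonzero regular mode is one-signed and grows at least linearly, which avoids the asymptotic analysis at infinity.
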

\begin{proof}
Expand in angular modes. Each radial coefficient of order $k$ satisfies
\begin{equation}\label{eq:mode-equation}
 h''+r^{-1}h'-k^2r^{-2}h+F'(U_\alpha)h=0.
\end{equation}
For $k=0$, the regular solution is a multiple of
$\partial_\alpha U_\alpha$, whose value at zero is one.
For $k=1$, write $h=wU_\alpha'$; then
$(r(U_\alpha')^2w')'=0$. Regularity at zero makes the integration
constant zero, so $h$ is a multiple of $U_\alpha'$.
For $k\ge2$, put $\psi=-U_\alpha'>0$ and choose the sign of a nonzero
regular solution so that $h(r)\sim a r^k$, $a>0$, at zero. Then
\begin{equation}\label{eq:higher-mode-comparison}
 \bigl(r\psi^2(h/\psi)'\bigr)'=(k^2-1)r^{-1}\psi h.
\end{equation}
Thus $h/\psi$ is positive and increasing until any first zero of $h$,
which is impossible. For large $r$, $r\psi^2(h/\psi)'\ge c>0$;
as $\psi\sim m/r$, this gives $(h/\psi)'\ge c'r$ and $h\ge c''r$.
Such a mode cannot be sublinear.
The value and gradient normalizations eliminate the three remaining
coefficients by \eqref{eq:kappa-alpha}.
\end{proof}

\begin{lemma}[Weighted Green estimate]\label{lem:weighted-green}
Let $G_R$ be the positive Dirichlet Green function of $-\Delta$ in
$B_R$, $R\ge2$. For $0<\sigma<1$, $\rho>0$, and every $x\in B_R$,
\begin{equation}\label{eq:weighted-green}
 \int_{B_R}|G_R(x,y)-G_R(0,y)|(1+|y|)^{-2-\rho}\,dy
 \le C_{\rho,\sigma}(1+|x|)^\sigma.
\end{equation}
Also,
\begin{equation}\label{eq:weighted-green-gradient}
 \int_{B_R}|\nabla_xG_R(x,y)|(1+|y|)^{-2-\rho}\,dy
 \le C_{\rho,\sigma}(1+|x|)^{\sigma-1}.
\end{equation}
The constants are independent of $R$; the same bounds hold for disks
whose centers are $o(1)$ from the normalization point.
\end{lemma}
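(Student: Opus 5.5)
We prove \eqref{eq:weighted-green} by comparison with the whole-plane kernel $\Gamma(x,y)=-(2\pi)^{-1}\log|x-y|$. The Dirichlet Green function is $G_R(x,y)=\Gamma(x,y)-H_R(x,y)$, where
\[
 H_R(x,y)=-(2\pi)^{-1}\log\bigl(|y|\,|x-y^*|/R\bigr),\qquad y^*=R^2y/|y|^2 .
\]
We therefore split
\[
 |G_R(x,y)-G_R(0,y)|\le|\Gamma(x,y)-\Gamma(0,y)|+|H_R(x,y)-H_R(0,y)|,
\]
and bound the two pieces separately, with constants independent of $R$.

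\textbf{Whole-plane piece.} Its contribution is $\int(2\pi)^{-1}|\log(|y|/|x-y|)|(1+|y|)^{-2-\rho}\,dy$. Set $r=1+|x|$ and split into three regions.

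On $|y|\ge2r$ we have $|\log(|y|/|x-y|)|\le C|x|/|y|$. The contribution is then at most $C|x|\,r^{-1-\rho}\le C$.

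On $|y|\le r/2$ we have $|x-y|\sim r$ when $|x|\ge1$, so the integrand is $\lesssim\log(r/|y|)+1$ times the weight. The integral of $\log(1/|y|)$ against the weight is finite. The term $\log r$ integrates to $C\log r\le C_\sigma r^\sigma$.

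On the annulus $r/2\le|y|\le2r$ the weight is $\lesssim r^{-2-\rho}$. Since $\int_{B_{3r}(x)}|\log(|y|/|x-y|)|\,dy\lesssim r^2$, this piece is $\lesssim r^{-\rho}$.

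For $|x|\le1$ everything is uniformly bounded. Altogether the whole-plane piece is $\le C(1+\log(1+|x|))\le C_{\rho,\sigma}(1+|x|)^\sigma$.

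\textbf{Regular part.} We have
\[
 H_R(x,y)-H_R(0,y)=-(2\pi)^{-1}\log\bigl(|x-y^*|/|y^*|\bigr)=-(2\pi)^{-1}\log|x/|y^*|-e|
\]
for a unit vector $e$. Here $|y^*|\ge R>|x|$, so the ratio $t=|x|/|y^*|=|x||y|/R^2<1$. When $t\le1/2$, the term is $\le Ct\le C|x||y|/R^2$. Integrating against the weight over $B_R$ gives $\lesssim|x|R^{-2}\int_{B_R}(1+|y|)^{-1-\rho}\,dy\lesssim|x|R^{-1-\rho}\le C$.

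When $t>1/2$, that is $|y|\gtrsim R^2/|x|\ge R$, we have $|y|\sim R$ and $|x|\sim R$. The logarithm is integrable, since $|x-y^*|$ is comparable to $|x^*-y|$ up to the factor $|x||y|/R^2$ of order one. Using the weight $R^{-2-\rho}$ on that region then yields $O(R^{-\rho}\log R)$, which is bounded. This is the step I expect to need the most care: making $|x-y^*|\ge R-|x|$ quantitative near the boundary, where the $\log$ singularity sits at $x^*$ outside $B_R$.

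\textbf{Gradient bound.} For \eqref{eq:weighted-green-gradient} use $|\nabla_xG_R(x,y)|\le C/|x-y|+C/|x-y^*|\le 2C/|x-y|$, because $|x-y^*|\ge|x-y|$ for $x,y\in B_R$. The same three-region split gives:
\begin{itemize}
\item on $|y|\ge2r$: $\int r^{-1}\ldots$, bounded by $Cr^{-1}$;
\item on $|y|\le r/2$: $\lesssim r^{-1}$;
\item on the annulus: $r^{-2-\rho}\int_{B_{3r}(x)}|x-y|^{-1}\,dy\lesssim r^{-1-\rho}$.
\end{itemize}
Hence the total is $\lesssim(1+|x|)^{-1}\le(1+|x|)^{\sigma-1}$.

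\textbf{Shifted centers.} The final claim follows by translation. If the disk is centered at $c$ with $|c|=o(1)$, then $(1+|y-c|)$ is comparable to $(1+|y|)$, so the same estimates apply with modified constants. The difference $G(0,y)$ versus $G(c,y)$ is handled by applying the first estimate with base point $c$ and noting that $(1+|x-c|)^\sigma\sim(1+|x|)^\sigma$.
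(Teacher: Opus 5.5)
Your proposal is correct. Your gradient estimate \eqref{eq:weighted-green-gradient} is the paper's argument: $|\nabla_xG_R(x,y)|\le(\pi|x-y|)^{-1}$ from $|x-y^*|\ge|x-y|$, followed by the region split. For the difference estimate \eqref{eq:weighted-green}, however, you take a longer route than the paper. You split $G_R=\Gamma-H_R$ and bound the whole-plane and reflected pieces separately, including the near-boundary case $|x||y|>R^2/2$ that you flagged as delicate.

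The paper avoids all of this. Once $\int_{B_R}|\nabla_xG_R(x',y)|(1+|y|)^{-2-\rho}\,dy\le C_\rho(1+|x'|)^{-1}$ holds for every $x'\in B_R$, it writes $G_R(x,y)-G_R(0,y)=\int_0^1 x\cdot\nabla_xG_R(\tau x,y)\,d\tau$; the segment stays inside the convex disk. Tonelli then gives the bound $C_\rho|x|\int_0^1(1+\tau|x|)^{-1}\,d\tau=C_\rho\log(1+|x|)$ in one line. The reflected term and the logarithmic singularity at $x^*$ never need separate treatment. The shifted-center version follows the same way, by integrating along the segment from the actual normalization point, since an $o(1)$ shift changes the weights only by bounded factors.

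What your decomposition buys is a slightly finer statement: the reflected part contributes a bound uniform in $x$ and $R$, so all the growth comes from the free-space kernel. Your triangle-inequality treatment of shifted centers is also valid; it applies the first estimate at the shifted point, which costs only $O(1)$.

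A few small slips need fixing:
\begin{enumerate}
\item On $|y|\le r/2$, the claim $|x-y|\sim r$ fails for $|x|$ near $1$ (take $|x|=1$ and $y=x$). Handle all bounded $|x|$, say $|x|\le3$, by your compactness remark.
\item In the case $t>1/2$, the correct bound is $|y|>R^2/(2|x|)>R/2$, not $|y|\ge R$. The conclusion $|x|,|y|\sim R$ still holds.
\item For $\rho\ge1$, $\int_{B_R}(1+|y|)^{-1-\rho}\,dy$ is $O(\log R)$ or $O(1)$ rather than $O(R^{1-\rho})$. After multiplying by $|x|R^{-2}$ it is still bounded, so the conclusion is unaffected.
\end{enumerate}
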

\begin{proof}
The disk formula
\(
 G_R(x,y)=\frac1{2\pi}\log
 \frac{|y|\,|R^2y/|y|^2-x|}{R|x-y|}
\)
gives $|\nabla_xG_R(x,y)|\le(\pi|x-y|)^{-1}$, since the reflected
point is at least as far from $x$ as $y$ is. Splitting at
$|y|=|x|/2$ and $|x-y|=|x|/2$ yields
$\int_{\R^2}|x-y|^{-1}(1+|y|)^{-2-\rho}\,dy
\le C_\rho(1+|x|)^{-1}$.
Integration along the segment from $0$ to $x$ then bounds the first
integral by $C_\rho\log(1+|x|)$. These stronger bounds imply both
claims. An $o(1)$ translation changes the weights by uniform factors.
\end{proof}

\begin{lemma}[Signed radial defect]\label{lem:J-negative}
The renormalized integral
\begin{equation}\label{eq:J-def}
 J(\alpha)=\int_{\R^2}
  (U_\alpha+m(\alpha)\log|z|-c_\infty(\alpha))\,dz
 =-\frac14\int_{\R^2}|z|^2F(U_\alpha)\,dz<0
\end{equation}
is finite and continuous on $I$; hence $\sup_IJ<0$.
\end{lemma}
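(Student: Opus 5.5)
The plan is to reduce $J(\alpha)$ to a one-dimensional radial integral and integrate by parts, using the flux identity \eqref{eq:radial-monotonicity}. Put $f_\alpha(r)=U_\alpha(r)+m\log r-c_\infty$ with $m=m(\alpha)$. By \eqref{eq:Usecond-asymptotic}, $f_\alpha(r)=O(r^{2-m})$ as $r\to\infty$. Since $m>4$ by \eqref{eq:profile-facts}, the function $rf_\alpha$ is integrable at infinity. Near $r=0$ the only singular term is $m\log r$, and $r\log r$ is integrable there. Hence the integral defining $J$ converges absolutely.

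For the identity, integrate \eqref{eq:radial-monotonicity} from $0$ to $r$:
\[
rU_\alpha'(r)=-\int_0^r sF(U_\alpha(s))\,ds .
\]
Since the total integral equals $m$, this gives
\[
f_\alpha'(r)=U_\alpha'(r)+\frac{m}{r}=\frac1r\int_r^\infty sF(U_\alpha(s))\,ds\ \ge 0 .
\]
Integration by parts on $(\eta,R)$ gives
\[
\int_\eta^R f_\alpha r\,dr=\Bigl[\tfrac{r^2}{2}f_\alpha\Bigr]_\eta^R-\frac12\int_\eta^R r\int_r^\infty sF(U_\alpha)\,ds\,dr .
\]
The boundary terms vanish in the limit. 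At the origin, $\eta^2\log\eta\to0$. At infinity, $R^2f_\alpha(R)=O(R^{4-m})\to0$. In the remaining double integral we apply Fubini, which is legitimate because the integrand is nonnegative:
\[
\int_0^\infty r\int_r^\infty sF\,ds\,dr=\int_0^\infty \frac{s^3}{2}F(U_\alpha(s))\,ds .
\]
Multiplying by $2\pi$ then yields
\[
J(\alpha)=-\frac{2\pi}{4}\int_0^\infty s^3F\,ds=-\frac14\int_{\R^2}|z|^2F(U_\alpha)\,dz .
\]
This last integral is finite because $F(U_\alpha)\le e^{U_\alpha}\le Cr^{-m}$ and $m>4$. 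Strict negativity follows because $U_\alpha<0$ forces $F(U_\alpha)=e^{U_\alpha}(1-e^{U_\alpha})>0$ everywhere.

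Continuity on $I$ is cleanest to prove from the second expression, by dominated convergence. The $C^1$ dependence on $\alpha$ gives locally uniform convergence of $U_\alpha$, so the integrands converge pointwise. The main step is a dominating function, and I expect this uniform tail bound to be the real obstacle. I would obtain it as follows. By \eqref{eq:profile-hyp} and continuity of the finite-disk masses $\int_{B_{r}}F(U_\alpha)$ in $\alpha$ (together with compactness of $\overline I$), there is a fixed radius $r_0$ such that $\frac1{2\pi}\int_{B_{r_0}}F(U_\alpha)\ge m_*$ for all $\alpha\in I$. This choice of a strict margin is exactly what the remark after \eqref{eq:profile-hyp} anticipates. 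For $r\ge r_0$ the flux identity then gives $rU_\alpha'(r)\le -m_*$. Together with a uniform bound $U_\alpha(r_0)\le 0$, this yields
\[
e^{U_\alpha(r)}\le (r/r_0)^{-m_*}\qquad (r\ge r_0,\ \alpha\in I).
\]
Hence $|z|^2F(U_\alpha)\le C(1+|z|)^{2-m_*}$ uniformly in $\alpha\in I$, which is integrable since $m_*>4$. Dominated convergence then gives continuity of $J$ on $I$. Finally, $\sup_I J<0$ follows from continuity and strict negativity, provided we take $I$ compact or replace it by its closure inside $(-\infty,0)$, where the same bounds still hold.
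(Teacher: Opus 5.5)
Your proposal is correct and follows essentially the paper's route. The paper writes $U_\alpha+m\log r-c_\infty=-\int_r^\infty s^{-1}\int_s^\infty tF(U_\alpha(t))\,dt\,ds$ and applies Tonelli directly, which is the same computation as your integration by parts plus Fubini with the same asymptotic inputs; for continuity it cites only ``uniform decay on $I$ and smooth dependence on $\alpha$,'' and your dominated-convergence argument, using $e^{U_\alpha(r)}\le (r/r_0)^{-m_*}$ from the mass margin in \eqref{eq:profile-hyp} and compactness of $\overline I$ for $\sup_I J<0$, correctly supplies that detail.
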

\begin{proof}
For $\mathfrak r_\alpha(r)=U_\alpha(r)+m\log r-c_\infty$,
\begin{equation}\label{eq:R-representation}
 \mathfrak r_\alpha(r)=-\int_r^\infty\frac1s
                  \int_s^\infty tF(U_\alpha(t))\,dt\,ds.
\end{equation}
It is integrable since $\mathfrak r_\alpha=m\log r+O(1)$ near zero and
$\mathfrak r_\alpha=O(r^{2-m})$ at infinity, with $m>4$.
Tonelli's theorem yields
$2\pi\int_0^\infty r\mathfrak r_\alpha(r)\,dr
=-(\pi/2)\int_0^\infty r^3F(U_\alpha(r))\,dr$.
Uniform decay on $I$ and smooth dependence on $\alpha$ give continuity.
\end{proof}

\section{Structure of a non-simple singular cluster}
\label{sec:cluster}

We retain the concentration assumptions of Section~\ref{sec:setting}
and allow both profile regimes.

Fix a non-simple vortex $p_i$, put $n=N_i$, and identify $p_i$ with
the origin in \eqref{eq:isothermal}. Choose $r_{\mathrm{iso}}>0$ so that $B_{4r_{\mathrm{iso}}}$
contains no other vortex or blow-up point. Non-simplicity gives
$x_\eps\in B_{r_{\mathrm{iso}}}$ with
\begin{equation}\label{eq:nonsimple-max}
 \we(x_\eps)+2(n+1)\log|x_\eps|
 =\max_{B_{r_{\mathrm{iso}}}}\{\we(x)+2(n+1)\log|x|\}
 \longrightarrow+\infty.
\end{equation}
The exterior convergence in \eqref{eq:measure-limit} implies
\begin{equation}\label{eq:clustering-scale}
 \de:=|x_\eps|\longrightarrow0.
\end{equation}
Since $u_0(x)=2n\log|x|+O(1)$ and
$\sup_Mu_\eps\le-c_{\mathrm{nt}}<0$,
\eqref{eq:nonsimple-max} also implies
\begin{equation}\label{eq:eta}
 \etae:=\frac{\eps}{\de}\longrightarrow0.
\end{equation}
Set
\begin{equation}\label{eq:first-rescaling-v}
 v_\eps(y):=\we(\de y)-4\pi N_{\tot}\gamma_i(\de y)
 -\Psi_i(p_i+\de y)+2(n+1)\log\de
 +\mathcal K_i^{\comp}(p_i).
\end{equation}
Then
\begin{equation}\label{eq:first-scale-equation-sec3}
 \Delta v_\eps+f_\eps=0,\qquad
 f_\eps:=e^{\varphi_i(\de y)}a_\eps e^{v_\eps}
       (1-\etae^2a_\eps e^{v_\eps}),
\end{equation}
where
\begin{equation}\label{eq:first-scale-coefficient-sec3}
 a_\eps(y):=|y|^{2n}
 \exp\{\mathcal K_i^{\comp}(p_i+\de y)
             -\mathcal K_i^{\comp}(p_i)\}.
\end{equation}
In particular, $\etae^2a_\eps e^{v_\eps}=e^{u_\eps(p_i+\de y)}$.
The density transforms with its area factor:
\begin{equation}\label{eq:first-scale-measure}
 \frac1{2\pi}f_\eps(y)\,dy
 =\frac{F(u_\eps(p_i+\de y))}{2\pi\eps^2}
                     e^{\varphi_i(\de y)}\de^2\,dy.
\end{equation}
Thus this first rescaling preserves normalized local mass.
The smooth background has already been removed in
\eqref{eq:first-rescaling-v}, so \eqref{eq:first-scale-equation-sec3}
has zero right-hand side, including in the surface setting.
We have $f_\eps\ge0$, its integral is uniformly bounded, and
$f_\eps\asymp |y|^{2n}e^{v_\eps}$ on $B_{2r_{\mathrm{iso}}/\de}$. On every fixed
disk disjoint from zero,
\begin{equation}\label{eq:a-uniform}
 C^{-1}\le a_\eps\le C,\qquad
 \|\log a_\eps\|_{C^{4,\sigma}}\le C.
\end{equation}

\begin{lemma}[Regular cores with smooth local coefficients]
\label{lem:local-regular-core}
Let $V_j$ solve, on a fixed disk $B_{4r}(q)$,
\[
 \Delta V_j+\mathcal D_j=0,\qquad
 \mathcal D_j=B_jA_je^{V_j}(1-\tau_j^2A_je^{V_j}),\qquad \tau_j\to0.
\]
Assume $A_j,B_j>0$, $\Delta\log A_j=0$, and, for some $C,c_0>0$,
\[
 C^{-1}\le A_j,B_j\le C,\quad
 \|\log A_j\|_{C^{4,\sigma}}+\|\log B_j\|_{C^{4,\sigma}}\le C,
 \quad \tau_j^2A_je^{V_j}\le1-c_0,\quad
 \int_{B_{4r}(q)}\mathcal D_j\le C.
\]
Suppose $q$ is the only blow-up point in this disk and $V_j\to-\infty$
locally uniformly off $q$. In the Dirichlet Green representation
$V_j=h_j+\int G_{4r}(\cdot,y)\mathcal D_j(y)\,dy$, assume
$\osc_{B_{3r}(q)}h_j\le C$; no bound on its additive constant is required.
Choose $x_j$ as a maximum of $V_j$ on $\overline{B_{3r}(q)}$, and set
\[
 \rho_j=e^{-V_j(x_j)/2},\qquad
 \mathfrak g_j(z)=\rho_j^2\mathcal D_j(x_j+\rho_jz),\qquad
 \widetilde\mu_j=\frac1{2\pi}\int_{B_r(x_j)}\mathcal D_j.
\]
Then $x_j\to q$, $\rho_j\to0$, and, for each $0<\theta<1$,
\begin{equation}\label{eq:regular-core-decay}
 \mathfrak g_j(z)\le C_\theta(1+|z|)^{-4+\theta},
 \qquad |z|\le2r/\rho_j.
\end{equation}
In particular, for $1\le R\le r/\rho_j$,
\begin{equation}\label{eq:regular-core-exhaustion}
 \int_{B_r(x_j)\setminus B_{R\rho_j}(x_j)}\mathcal D_j
 \le C_\theta R^{-2+\theta}.
\end{equation}
Uniformly for $|x-x_j|\le r$,
\begin{equation}\label{eq:local-regular-log}
 V_j(x)=V_j(x_j)-\frac{\widetilde\mu_j}{2}
              \log(1+|x-x_j|^2/\rho_j^2)+O(1).
\end{equation}
After extraction, the whole atom is a single regular profile: its
normalized mass is $4$ in the mean-field case and greater than $4$
in the finite-height case. The constants do not require a positive
lower bound on $\tau_j^2A_j(x_j)e^{V_j(x_j)}$.
\end{lemma}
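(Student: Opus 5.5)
We first rescale at the maximum point $x_j$ and pass to an entire limit. The oscillation bound on $h_j$ together with $V_j\to-\infty$ off $q$ forces $V_j(x_j)\to+\infty$; otherwise the Green representation would keep $V_j$ bounded near $q$ and $q$ would not be a blow-up point. Hence $\rho_j\to0$. Moreover $x_j\to q$, since $V_j\to-\infty$ uniformly on $\overline{B_{3r}(q)}\setminus B_\delta(q)$.

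Set $\widehat V_j(z)=V_j(x_j+\rho_jz)-V_j(x_j)\le0$. It satisfies $\Delta\widehat V_j+\mathfrak g_j=0$, where
\[
\mathfrak g_j=B_jA_je^{\widehat V_j}\bigl(1-\tau_j^2\rho_j^{-2}A_je^{\widehat V_j}\bigr)
\]
is evaluated at $x_j+\rho_jz$. The coefficients $A_j,B_j$ converge to constants on $\rho_j$-scales, by the $C^{4,\sigma}$ bounds. The factor $\tau_j^2\rho_j^{-2}$ converges after extraction to some $\kappa\in[0,\infty)$. Boundedness follows from the hypothesis $\tau_j^2A_je^{V_j}\le1-c_0$ at $x_j$. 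That hypothesis also gives $\mathfrak g_j\ge0$ with $\mathfrak g_j\asymp e^{\widehat V_j}$, which is a fact we reuse throughout.

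Harnack plus the upper bound $\widehat V_j\le0$ gives $C^2_{\rm loc}$ convergence. The limit is a solution of $\Delta V+B A e^V(1-\kappa Ae^V)=0$ with finite mass. If $\kappa=0$, this is the Liouville equation and the Chen--Li classification gives mass $4$. If $\kappa>0$, rescaling the constants reduces it to \eqref{eq:entire}, with mass $m(\alpha)>4$ by \eqref{eq:profile-facts}; this uses radial symmetry of finite-mass non-topological solutions, as in \cite{CFL,ChoeKim}.

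The main step is the decay estimate \eqref{eq:regular-core-decay} on the whole range $|z|\le2r/\rho_j$. This is the step I expect to be the obstacle, since it must hold with no lower bound on $\kappa$ and uniformly out to the macroscopic scale. I would use the Green representation
\[
V_j(x)-V_j(x_j)=h_j(x)-h_j(x_j)+\int\bigl(G_{4r}(x,y)-G_{4r}(x_j,y)\bigr)\mathcal D_j(y)\,dy .
\]
For $|x-x_j|=\rho_js$, split the integral into the inner region $B_{R\rho_j}(x_j)$ and the rest. The inner part carries mass $\widetilde\mu_{\rm in}\ge 4-\theta/2$ for large $R$. This lower bound holds because the limit profile has mass at least $4$, and in the Chern--Simons case strictly more. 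The inner part contributes $-\widetilde\mu_{\rm in}\log s+O(1)$.

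The outer part is handled by a standard bootstrap, as in the Li--Shafrir/Bartolucci--Tarantello selection argument. The quantity $V_j+2\log|x-x_j|$ is bounded above on $B_{3r}$: otherwise a second local maximum appears, and rescaling there would produce a second atom, contradicting uniqueness of the blow-up point $q$. With this bound, the outer contribution is at least $-(\mu_{\rm out})\log s-C$ with $\mu_{\rm out}$ small, and the oscillation bound on $h_j$ costs only $O(1)$. Together these give $\widehat V_j(z)\le-(4-\theta)\log|z|+C_\theta$, hence $\mathfrak g_j\le C_\theta(1+|z|)^{-4+\theta}$.

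Integrating \eqref{eq:regular-core-decay} yields \eqref{eq:regular-core-exhaustion}. This in turn shows there is no neck mass, so $\widetilde\mu_j$ converges to the profile mass. Feeding the decay back into the Green representation, we estimate the contribution of $\mathcal D_j$ to $\log|x-y|$ versus $\log|x-x_j|$ by Lemma~\ref{lem:weighted-green}, after rescaling by $\rho_j$. Because $\widetilde\mu_j>2$ makes the weight integrable with power $\rho=2-\theta>0$, this gives the two-sided estimate \eqref{eq:local-regular-log} with an $O(1)$ error, uniformly for $|x-x_j|\le r$. Finally, the identification of the atom as a single profile follows from exhaustion: the mass $\widetilde\mu_j$ equals that of the limit entire solution, namely $4$ or $m(\alpha)>4$ according to whether $\kappa=0$ or $\kappa>0$. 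None of these constants used a positive lower bound on $\kappa$.
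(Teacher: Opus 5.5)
Your proposal has a genuine gap at the step you flag as the obstacle. You assert that $V_j(x)+2\log|x-x_j|\le C$ on $B_{3r}(q)$, because otherwise a second local maximum would produce ``a second atom, contradicting uniqueness of the blow-up point $q$.'' That is not a contradiction. If $V_j(y_j)+2\log|y_j-x_j|\to\infty$, then $\ell_j:=|y_j-x_j|\to0$, and the second atom appears only after rescaling by $\ell_j$. At the original scale both atoms collapse onto $q$, which remains the only blow-up point.

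The same reasoning would rule out the multi-core clusters of Section~\ref{sec:cluster}. Those clusters collapse onto a single blow-up point and are realized in Theorems~\ref{cs:thm:existence} and~\ref{mf:thm:main}. Your argument also never uses the hypothesis $\osc_{B_{3r}(q)}h_j\le C$ except as an $O(1)$ cost. Yet this hypothesis is exactly what controls the harmonic gradient in the force balance that forbids such a collapse. The selection argument you cite only gives $V_j+2\log\dist(\cdot,\Sigma_j)\le C$ for some finite set $\Sigma_j$, not for the single point $x_j$.

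The missing idea is the translation (Pohozaev) identity at the intermediate scale, which is the heart of the paper's proof.
\begin{itemize}
\item Set $W_j(z)=V_j(x_j+\ell_jz)+2\log\ell_j$. Brezis--Merle gives finitely many atoms $\xi_a$ with masses $\mu_a>0$, one at $0$ and one on the unit circle.
\item Because $\log A_j$ and $\log B_j$ are smooth and $\osc h_j\le C$, the coefficient and harmonic gradient errors are $O(\ell_j)$. Remote sources contribute $O(T^{-1})$.
\item There is no singular weight, unlike the term $2nq_t/|q_t|^2$ at a vortex. So the identity on small circles about each atom forces $\sum_{b\ne a}\mu_b(\xi_a-\xi_b)/|\xi_a-\xi_b|^2=0$.
\item Pairing with $\mu_a\xi_a$ and summing gives $\sum_{a<b}\mu_a\mu_b=0$, which is impossible.
\end{itemize}
This yields the moving-disk estimate \eqref{eq:regular-moving-disk}, and your bound follows from it:
\begin{itemize}
\item If $\ell_j\gg\rho_j$, Brezis--Merle at scale $\ell_j$ would put positive mass near $y_j$, contradicting \eqref{eq:regular-moving-disk}.
\item If $\ell_j=O(\rho_j)$, the bound is automatic from maximality at $x_j$.
\end{itemize}

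Once this is supplied, your Green splitting does give \eqref{eq:regular-core-decay}. With $\mathfrak g_j(y)\le C|y|^{-2}$, the disk $B_{s/2}(z)$ contributes $O(1)$. Mass in $R\le|y|\le s/2$ has nonpositive kernel, and the rest has kernel at most $\log3$.

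A smaller point concerns \eqref{eq:local-regular-log}. Lemma~\ref{lem:weighted-green} only bounds the Green difference by $C(1+|x|)^\sigma$, so it cannot give the $O(1)$ error there. For that you need a direct kernel decomposition using the bounded logarithmic moment of $\mathfrak g_j$, as the paper does following Choe.
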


\begin{proof}
We adapt \cite[Lemmas~2.7, 2.9--2.11]{LinYanAIHP} and
\cite[Theorem~3.1]{Choe}; see also \cite[equation~(3.4)]{CKL}.
Replacing $V_j$ by $V_j+\log A_j$ absorbs the harmonic factor.
On any scale $\ell_j\to0$, $B_j(x_j+\ell_jz)$ converges locally,
after extraction, to a positive constant. The primitive
\[
 \mathcal Q_j(x,V)=B_jA_je^V(1-\tau_j^2A_je^V/2),
 \qquad 0\le\mathcal Q_j\le C\mathcal D_j
\]
has explicit spatial derivative
$\mathcal D_j\nabla\log A_j+\mathcal Q_j\nabla\log B_j$.
The rescaled translation identities therefore have coefficient error
at most $C\ell_j\int\mathcal D_j$; the bounded harmonic oscillation
gives a further $O(\ell_j)$ gradient error. Sources outside a rescaled
disk of large radius $T$ contribute at most
$CT^{-1}\int\mathcal D_j$ on fixed compact sets.
Letting $j\to\infty$ and then $T\to\infty$ gives the constant-coefficient
profile equations and interaction identities, also for smooth metric
factors $B_j$, without using core exhaustion.

At the maximum scale $\rho_j$, local compactness gives an entire
regular profile. After constant normalizations, its type is determined
by the limit of $\tau_j^2A_j(x_j)e^{V_j(x_j)}$: zero gives a Liouville
profile of normalized mass $4$, and a positive limit gives a
Chern--Simons profile of mass greater than $4$;
see \cite[pp.~840--841]{CKL} and \cite[Lemmas~2.5--2.6]{LinYanAIHP}.
Thus a sufficiently large fixed rescaled disk has mass at least
$4-\theta$ for large $j$.

There can be no positive mass in a disk centered a distance
$\ell_j\gg\rho_j$ from $x_j$ and of radius $\ell_j/2$.
Otherwise, after extraction, $\ell_j\to0$, since disks remaining
a fixed distance from $q$ have vanishing mass. Set
\(W_j(z)=V_j(x_j+\ell_jz)+2\log\ell_j.\)
Since $\tau_j=O(\rho_j)$, we have $\lambda_j:=\tau_j/\ell_j\to0$.
Writing $A_j^\ell(z)=A_j(x_j+\ell_jz)$ and similarly for $B_j^\ell$, we have
\[
 \Delta W_j+C_je^{W_j}=0,\qquad
 C_j=B_j^\ell A_j^\ell(1-\lambda_j^2A_j^\ell e^{W_j}),
 \qquad 0<c\le C_j\le C.
\]
Thus $\int e^{W_j}\le C$ on each fixed rescaled disk, while
$W_j(0)=2\log(\ell_j/\rho_j)\to+\infty$.
The concentration alternative of \cite[Theorem~3]{BM}, applied
diagonally on expanding fixed rescaled disks, gives atomic convergence
and decay away from a finite set: the atom threshold and total mass
bound ensure finiteness. The original core supplies an atom at zero,
and the positive mass at unit distance forces another, without using
core exhaustion. Write $\xi_a$ and $\mu_a>0$ for the atom locations
and normalized masses. Apply the translation identity on a small
circle enclosing only $\xi_a$, first letting $j\to\infty$ and then
shrinking the circle. Its boundary terms use $W_j\to-\infty$ and
the exterior Green-gradient limit; no decomposition or exhaustion
of the enclosed atom is needed. The identity gives
\[
 \sum_{b\ne a}\mu_b
        \frac{\xi_a-\xi_b}{|\xi_a-\xi_b|^2}=0.
\]
The coefficient, harmonic, and remote-source errors vanish in the
order of limits specified above.
Taking scalar products with $\mu_a\xi_a$ and summing gives
$\sum_{a<b}\mu_a\mu_b=0$, a contradiction. Disks remaining a fixed
distance from $q$ already have vanishing mass. This argument yields
the following \emph{moving-disk} estimate, rather than a conclusion
about the mass of the entire neck:
\begin{equation}\label{eq:regular-moving-disk}
 \lim_{R\to\infty}\limsup_{j\to\infty}
 \sup_{R\rho_j\le |x-x_j|\le2r}
 \int_{B_{|x-x_j|/2}(x)}\mathcal D_j=0.
\end{equation}
The disks lie in $B_{4r}(q)$ for large $j$. This is the local form of
\cite[Lemma~2.7]{LinYanAIHP}, justified by the coefficient checks above.

We now obtain a summable annular bound from this local estimate.
Put $\widehat V_j(z)=V_j(x_j+\rho_jz)-V_j(x_j)$ and
$\Omega_j=(B_{4r}(q)-x_j)/\rho_j$. Subtracting the Dirichlet Green
formula at $x_j$ gives, uniformly for $|z|\le2r/\rho_j$,
\begin{equation}\label{eq:regular-green-difference}
 \widehat V_j(z)=\frac1{2\pi}\int_{\Omega_j}
      \log\frac{|y|}{|z-y|}\,\mathfrak g_j(y)\,dy+O(1).
\end{equation}
The harmonic oscillation, regular Green part, and total mass control
the $O(1)$ term uniformly.
The maximum normalization implies $\mathfrak g_j\le C$ on every
rescaled subset corresponding to $B_{3r}(q)$.

Fix $0<\theta<1$. Choose a fixed $R_\theta$ so that, for large $j$,
\(
 \frac1{2\pi}\int_{B_{R_\theta}}\mathfrak g_j
       \ge4-\theta/4.
\)
This follows from local convergence to the first entire profile,
without any assertion about its neck. For $s=|z|\ge2R_\theta$,
decompose the Green integral into the three disjoint regions
\[
 \begin{split}
 \mathcal E_{1,j}&=B_{R_\theta},\quad
 \mathcal E_{2,j}=(\Omega_j\setminus B_{R_\theta})\cap B_{s/2}(z),\quad
 \mathcal E_{3,j}=\Omega_j\setminus
                         (\mathcal E_{1,j}\cup\mathcal E_{2,j}).
 \end{split}
\]
The first region contributes at most
$-(4-\theta/4)\log s+C_\theta$. The fixed loss $\theta/4$ is retained
before multiplication by the unbounded factor $\log s$.
On $\mathcal E_{3,j}$ the kernel is bounded above by $\log3$,
since $|y|\le|y-z|+s\le3|y-z|$. The total mass controls this term.
The remaining region contains the moving logarithmic singularity;
we use \eqref{eq:regular-moving-disk} there.

For the remaining disk choose $0<a<1$. The local bound on
$\mathfrak g_j$ gives
\[
 \frac1{2\pi}\int_{B_a(z)}
       \log\frac{|y|}{|z-y|}\,\mathfrak g_j(y)\,dy
       \le Ca^2\log(2+s)+C_a.
\]
On $B_{s/2}(z)\setminus B_a(z)$, the same integral is at most
\(
 \frac1{2\pi}\left(\int_{B_{s/2}(z)}\mathfrak g_j\right)
                   \log\frac{Cs}{a}.
\)
First choose $a$ so that $Ca^2<\theta/4$, and then use
\eqref{eq:regular-moving-disk} to make the disk mass divided by
$2\pi$ less than $\theta/4$, uniformly for sufficiently large $s$
and $j$. Bounded $s$ are covered by the maximum normalization.
Consequently
\[
 \widehat V_j(z)\le-(4-\theta)\log(1+|z|)+C_\theta,
 \qquad |z|\le2r/\rho_j.
\]
The coefficient bounds imply $\mathfrak g_j\le C e^{\widehat V_j}$,
which proves \eqref{eq:regular-core-decay}.

For clarity, the passage to the whole neck is a geometric sum.
For $R\ge1$ put
\[
 A_{j,k}=\{2^kR\rho_j\le |x-x_j|<2^{k+1}R\rho_j\}
                 \cap B_{2r}(x_j),\qquad k\ge0.
\]
The decay just proved gives
\begin{equation}\label{eq:regular-dyadic-neck}
 \int_{A_{j,k}}\mathcal D_j
       \le C_\theta(2^kR)^{-2+\theta},\qquad
 \int_{B_{2r}(x_j)\setminus B_{R\rho_j}(x_j)}\mathcal D_j
       \le \frac{C_\theta R^{-2+\theta}}{1-2^{-2+\theta}}.
\end{equation}
The bound is independent of the number of nonempty annuli, so it
excludes diffuse neck mass and proves \eqref{eq:regular-core-exhaustion}
in both regimes. Exhaustion outside the whole cluster is proved
separately in Proposition~\ref{prop:polygon}.

Finally, \eqref{eq:regular-core-decay} gives the uniform logarithmic
moment bound
\[
 \int_{|y|\le2r/\rho_j}\log(2+|y|)\,\mathfrak g_j(y)\,dy
 \le C_\theta\int_0^\infty t(1+t)^{-4+\theta}\log(2+t)\,dt<\infty.
\]
Together with local integrability of the logarithmic kernel, this
allows the Green-kernel decomposition in
\cite[proof of Theorem~3.1]{Choe} to bound both logarithmic remainders.
The leading coefficient is the actual finite-disk mass,
as in \cite[equation~(32)]{Choe}. Replacing the mass
of $B_{2r}(x_j)$ by that of $B_r(x_j)$ costs
$O(\rho_j^{2-\theta}|\log\rho_j|)=o(1)$.
This gives \eqref{eq:local-regular-log}; the bounded difference
between $\log(1+t)$ and $\tfrac12\log(1+t^2)$ is immaterial.
\end{proof}

\begin{lemma}[Compact-scale comparison and balance]
\label{lem:compact-cluster-balance}
After passage to a subsequence there are finitely many distinct
nonzero centers $q_t$, $t=0,\ldots,S-1$, with $|q_0|=1$, and
points $q_{t,\eps}\to q_t$ chosen as maxima on fixed disjoint small
disks about $q_t$. Write $b_t>0$ for their limiting masses
and $c$ for a possible mass at zero. Then $c=0$, $S\ge2$, and
\begin{equation}\label{eq:weighted-location-system}
 2n\frac{q_t}{|q_t|^2}
 -\sum_{s\ne t}b_s\frac{q_t-q_s}{|q_t-q_s|^2}=0.
\end{equation}
Moreover, $v_\eps\to-\infty$ uniformly on a small disk about zero.
\end{lemma}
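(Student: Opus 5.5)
We work with the first rescaling $v_\eps$ of \eqref{eq:first-rescaling-v}, which solves \eqref{eq:first-scale-equation-sec3} with nonnegative density $f_\eps$ of bounded total integral. First we locate the atoms. The normalization \eqref{eq:nonsimple-max} puts $y_\eps:=x_\eps/\de$ on the unit circle. After extraction $y_\eps\to q_0$, and the maximality in \eqref{eq:nonsimple-max} gives
\[
v_\eps(y)+2(n+1)\log|y|\le v_\eps(y_\eps)+O(1)\qquad\text{on } B_{r_{\mathrm{iso}}/\de}.
\]
The same maximality gives $v_\eps(y_\eps)\to+\infty$. On every compact subset of $\R^2\setminus\{0\}$ the coefficient $a_\eps$ obeys \eqref{eq:a-uniform}, and $\etae\to0$. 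Hence Lemma~\ref{lem:local-regular-core}, applied on small disks, together with the Brezis--Merle alternative \cite{BM} used as in the proof of that lemma, yields the following dichotomy on each such disk: either $v_\eps\to-\infty$ locally uniformly, or there are finitely many atoms, each a single regular profile of mass at least $4$. The total mass bound then gives finitely many atoms $q_t\neq0$ with masses $b_t\ge4$, plus a possible mass $c$ at the origin. Since $q_0$ carries a core, $S\ge1$. We choose $q_{t,\eps}$ as local maxima on fixed disjoint disks, as in Lemma~\ref{lem:local-regular-core}.

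Next we derive the balance law \eqref{eq:weighted-location-system}. We apply the translation (Pohozaev-type) identity on a small circle about each $q_t$, in the same way as in the moving-disk step of Lemma~\ref{lem:local-regular-core}, with the order of limits: first $\eps\to0$, then the circle radius $\to0$. Away from the atoms, the Green representation of $v_\eps$ gives
\[
\nabla v_\eps\to -\sum_s b_s\frac{y-q_s}{|y-q_s|^2}-c\,\frac{y}{|y|^2}.
\]
The coefficient $a_\eps=|y|^{2n}e^{O(\de|y|)}$ contributes $\nabla\log a_\eps\to 2n\,y/|y|^2$, which is smooth near $q_t$. The term $-\de^2\etae^2$ from the nonlinearity only produces $o(1)$ errors, by the primitive $\mathcal Q_j$ argument. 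This yields the identity
\[
(2n-c)\frac{q_t}{|q_t|^2}-\sum_{s\ne t}b_s\frac{q_t-q_s}{|q_t-q_s|^2}=0,
\]
which is \eqref{eq:weighted-location-system} once $c=0$ is known. Given $c=0$, we also get $S\ge2$: if $S=1$, the identity reduces to $2n\,q_0=0$, which contradicts $|q_0|=1$ and $n>0$.

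The main obstacle is excluding mass at zero, i.e. proving $c=0$ and that $v_\eps\to-\infty$ near $0$. Suppose instead that $v_\eps$ does not tend to $-\infty$ on any disk about $0$. The mass near zero then concentrates at the singular source $|y|^{2n}$, which requires a further rescaling $\ell_\eps\to0$. We would try the following.
\begin{itemize}
\item First, use the maximality bound $v_\eps(y)\le v_\eps(y_\eps)-2(n+1)\log|y|+O(1)$ near zero.
\item Second, use the Green representation: the far cores contribute the smooth harmonic term $-\sum_t b_t\log|y-q_t|$ near $0$.
\item Third, apply a Pohozaev identity on $B_r(0)$ for the singular Liouville-type equation $\Delta v+|y|^{2n}e^{v}(1+o(1))=0$. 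This forces the mass $c$ concentrated at zero to satisfy $c\,(c-4(n+1))=0$ up to errors from the $\etae$ term, which have a favorable sign since $\mathcal P\le e^t$. A nonzero value $c\ge4(n+1)$ would be a singular bubble.
\item Fourth, compare exponents. With such a singular bubble, $v_\eps(y)+2(n+1)\log|y|$ would be large at the bubble scale $\ell_\eps$. Because of the maximality, and because $y_\eps$ already carries a regular core of value comparable to $v_\eps(y_\eps)$, the decay estimates \eqref{eq:local-regular-log} evaluated at $|y|=1$ give a contradiction. The key quantitative input is that at unit distance the singular bubble already forces $v_\eps\le$ its maximum minus a divergent quantity, incompatible with the core at $y_\eps$ reaching the maximal normalized value.
\end{itemize}
We expect the sign bookkeeping in this scale comparison to be the delicate point. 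Once $c=0$, we have $\int_{B_r}f_\eps\to0$, and Brezis--Merle gives $v_\eps\to-\infty$ uniformly on a smaller disk, since the harmonic part is controlled by the far cores.
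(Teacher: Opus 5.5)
There is a genuine gap in the step you flagged as the main obstacle: the fourth bullet does not exclude a central mass $c\ge4(n+1)$.

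At a singular bubble of scale $\ell_\eps$ one has $|y|^{2n}e^{v_\eps}\asymp\ell_\eps^{-2}$, so $v_\eps+2(n+1)\log|y|$ is $O(1)$ at the bubble scale, not large. This value lies far below $v_\eps(y_\eps)\to+\infty$, so the maximality in \eqref{eq:nonsimple-max} is not violated.

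Comparing logarithmic decays into the common neck does not help either. There $v_\eps=C_\eps-c\log|y|-\sum_tb_t\log|y-q_t|+o(1)$, and the comparison only relates the scales, schematically
\[
C_\eps=(c-2n-2)\log\ell_\eps+O(1)=(b_t-2)\log\rho_{t,\eps}+O(1).
\]
Both sides tend to $-\infty$ compatibly, so no sign bookkeeping produces a contradiction. Height and scale comparisons cannot detect the obstruction, because it is a force balance.

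The missing step uses an identity you already derived. Take the scalar product of your balance law
$(2n-c)q_t/|q_t|^2-\sum_{s\ne t}b_s(q_t-q_s)/|q_t-q_s|^2=0$ with $b_tq_t$ and sum over $t$. Pairing the interaction terms gives
\[
 (2n-c)\sum_tb_t=\sum_{s<t}b_sb_t\ge0 ,
\]
so $c\le2n<4(n+1)$. Your Pohozaev dichotomy ($c=0$ or $c\ge4(n+1)$) then forces $c=0$. With $S=1$ the same identity would read $2nb_0=0$, so $S\ge2$. This is exactly the paper's argument.

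The radial Pohozaev identity also needs neither a further rescaling nor identification of a singular profile. Apply it on a fixed small disk about zero at the compact scale, let $\eps\to0$ first and then the radius tend to $0$. Read the boundary terms off the gradient limit $\nabla v_\eps\to-c\,y/|y|^2-\sum_tb_t(y-q_t)/|y-q_t|^2$. This yields
$\pi c^2=4\pi(n+1)c+\lim_{r\downarrow0}\lim_{\eps\to0}\int_{B_r}e^{\varphi_i(\de y)}\etae^2a_\eps^2e^{2v_\eps}\,dy$, and the last term is nonnegative.

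A minor point: apply the concentration alternative on connected annuli containing $q_0$ rather than on isolated small disks. That excludes locally bounded behavior away from the atoms. It also gives the uniform divergence to $-\infty$ on the boundary circle that your final Brezis--Merle step needs.
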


\begin{proof}
Apply the concentration alternative of \cite{ChoeKim} and
\cite[Section~2]{LinYanAIHP} on connected annuli containing the
unit-circle blow-up point in \eqref{eq:nonsimple-max}, and exhaust
$\R^2\setminus\{0\}$ diagonally. Here $f_\eps\asymp e^{v_\eps}$
with bounded source mass. On each isolated atom disk, the localized
Green formula has bounded harmonic oscillation: omitted sources have
bounded mass and stay a fixed distance away, while rescaled smooth
Green terms have bounded derivatives. Its additive constant is irrelevant.
Lemma~\ref{lem:local-regular-core} applies with
\(A_j=a_\eps,\quad B_j=e^{\varphi_i(\de\,\cdot)},\quad\tau_j=\etae\),
since $\log a_\eps$ is harmonic away from zero,
\eqref{eq:a-uniform} bounds the coefficients, and
$1-\etae^2a_\eps e^{v_\eps}\ge1-e^{-c_{\mathrm{nt}}}>0$.
Thus every regular atom is simple with mass $b_t\ge4$, and the
total mass bound makes their number finite.

The Green representation gives, on compact subsets of
$\R^2\setminus\{0,q_0,\ldots,q_{S-1}\}$,
\begin{equation}\label{eq:compact-green-gradient}
 \nabla v_\eps(y)\longrightarrow
 -c\frac{y}{|y|^2}
 -\sum_t b_t\frac{y-q_t}{|y-q_t|^2}.
\end{equation}
Indeed, for a fixed large $R$, convergence of the local source
measures and core centers, together with the locally uniform decay
of $f_\eps$ away from the atoms, gives an $o_\eps(1)$ error on such
compact sets. The rescaled smooth Green terms have gradient $O(\de)$,
whereas the sources outside $B_R$ contribute at most $C/R$ times
the total mass. Thus the remainder is
$o_\eps(1)+O(\de)+O(R^{-1})$; no rate is asserted for the local
measure convergence. Letting first $\eps\to0$ and then $R\to\infty$
proves \eqref{eq:compact-green-gradient}; subtraction at two points
also gives bounded exterior oscillation.

To pass to the radial Pohozaev identity, first fix a small radius
whose boundary contains no regular atom, let $\eps\to0$, and then
let the radius tend to zero. The nonnegative densities
$e^{\varphi_i(\de y)}\etae^2a_\eps^2e^{2v_\eps}$ are bounded by
$C f_\eps$, so their measures admit a weakly convergent subsequence.
Using continuity radii for the limiting measure, the identity becomes
\[
 \pi c^2=4\pi(n+1)c+
 \lim_{r\downarrow0}\lim_{\eps\to0}
 \int_{B_r}e^{\varphi_i(\de y)}
                  \etae^2a_\eps^2e^{2v_\eps}\,dy.
\]
It follows that $c=0$ or $c\ge4(n+1)$. To evaluate the translational
identity, use the primitive
\(\mathcal P_\eps(y,v)=e^{\varphi_i(\de y)}a_\eps e^v (1-\etae^2a_\eps e^v/2).\)
Its explicit spatial derivative is
$f_\eps\nabla\log a_\eps+\mathcal P_\eps\nabla\varphi_i(\de y)$.
The second term tends to zero on compact sets, since
$|\mathcal P_\eps|\le C f_\eps$ and $\nabla_y\varphi_i(\de y)\to0$.
Together with \eqref{eq:compact-green-gradient}, the identity on
each core disk therefore gives
\(
 (2n-c)\frac{q_t}{|q_t|^2}
 -\sum_{s\ne t}b_s\frac{q_t-q_s}{|q_t-q_s|^2}=0.
\)
Taking scalar products with $b_tq_t$ and summing yields
\begin{equation}\label{eq:weighted-cluster-virial}
 (2n-c)\sum_t b_t=\sum_{s<t}b_sb_t.
\end{equation}
Its right-hand side is nonnegative and $\sum_t b_t>0$, so
$c\le2n$. This contradicts $c\ge4(n+1)$ whenever $c>0$.
Hence $c=0$, without any equality assumption on the regular masses.
Since $n>0$, \eqref{eq:weighted-cluster-virial} also excludes $S=1$.

Finally, fix a disk $B_\varrho(0)$ containing no regular atom.
Its boundary values tend uniformly to $-\infty$ and, since $c=0$,
$\int_{B_\varrho}f_\eps\to0$. Write
$v_\eps=\mathfrak h_\eps+\mathfrak w_\eps$, where
$\mathfrak h_\eps$ is the harmonic extension of the boundary data
and $\mathfrak w_\eps$ is the zero-boundary Green potential of
$f_\eps$. The maximum principle gives
$\sup\mathfrak h_\eps\to-\infty$ and $\mathfrak w_\eps\ge0$.
For any fixed $p>1$, the small-mass exponential estimate of
\cite{BM} gives
\[
 \int_{B_\varrho}e^{p\mathfrak w_\eps}\le C_p,\qquad
 \|f_\eps\|_{L^p(B_\varrho)}
   \le C e^{\sup\mathfrak h_\eps}
                      \|e^{\mathfrak w_\eps}\|_{L^p(B_\varrho)}
   \longrightarrow0.
\]
Here $f_\eps\le C e^{v_\eps}$ because $n>0$ and the remaining
coefficient is bounded on this fixed disk. The Dirichlet
$W^{2,p}$ estimate and its embedding into $C^0$ give
$\|\mathfrak w_\eps\|_\infty\to0$. Therefore
$v_\eps\to-\infty$ uniformly on a smaller disk about zero.
\end{proof}

The lemma also applies after replacing $\de$ by any scale
$s_\eps\to0$ with $\eps/s_\eps\to0$ on which a nonzero regular
concentration point occurs. Its proof allows arbitrary positive regular masses.

Choose $r>0$ so that the disks $B_{2r}(q_{t,\eps})$ are disjoint
and avoid zero, and define
\begin{equation}\label{eq:first-local-mass}
 \widetilde m_{t,\eps}:=\frac1{2\pi}
       \int_{B_r(q_{t,\eps})}f_\eps,\qquad
 \vartheta_{t,\eps}:=v_\eps(q_{t,\eps})+2\log\etae.
\end{equation}

The core width in the first-scale coordinates and its counterpart
in the original coordinates are, respectively,
\[
 \rho_{t,\eps}:=e^{-v_\eps(q_{t,\eps})/2}
       =\etae e^{-\vartheta_{t,\eps}/2},
 \qquad
 \de\rho_{t,\eps}=\eps e^{-\vartheta_{t,\eps}/2}.
\]
These widths are distinct from the cluster scale $\de$.
The normalized physical height is
$\vartheta_{t,\eps}+\log a_\eps(q_{t,\eps})
=u_\eps(p_i+\de q_{t,\eps})$.

\begin{lemma}[Uniform logarithmic comparison]\label{lem:isolated-core-log}
For each selected core, uniformly on $B_r(q_{t,\eps})$,
\begin{equation}\label{eq:isolated-core-log}
 v_\eps(y)=v_\eps(q_{t,\eps})
 -\frac{\widetilde m_{t,\eps}}2
 \log\bigl(1+e^{v_\eps(q_{t,\eps})}|y-q_{t,\eps}|^2\bigr)+O(1).
\end{equation}
The constant is uniform in $t,\eps$ and in both profile regimes.
\end{lemma}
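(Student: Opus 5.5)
The plan is to reduce Lemma~\ref{lem:isolated-core-log} to the local estimate \eqref{eq:local-regular-log} of Lemma~\ref{lem:local-regular-core}. That estimate is centered at the local maximum and uses the width $\rho_j=e^{-V_j(x_j)/2}$. The only real work is verifying the hypotheses uniformly in $t$, $\eps$ and in both regimes.

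On each core disk $B_{4r}(q_t)$, with $r$ as fixed before \eqref{eq:first-local-mass}, we take
\[
V_j=v_\eps,\qquad A_j=a_\eps,\qquad B_j=e^{\varphi_i(\de\,\cdot)},\qquad \tau_j=\etae,\qquad x_j=q_{t,\eps}.
\]
This is exactly the identification already used in the proof of Lemma~\ref{lem:compact-cluster-balance}. We then check the hypotheses one at a time.
\begin{itemize}
\item $\log a_\eps$ is harmonic away from zero, and \eqref{eq:a-uniform} gives the $C^{4,\sigma}$ and two-sided bounds, since the disk avoids zero.
\item $\etae^2a_\eps e^{v_\eps}=e^{u_\eps}\le e^{-c_{\mathrm{nt}}}$ gives the non-topological margin $1-c_0$.
\item Total mass is bounded by \eqref{eq:first-scale-measure}.
\item $q_t$ is the only blow-up point in the disk, and $v_\eps\to-\infty$ off the atoms by Lemma~\ref{lem:compact-cluster-balance} and the exterior convergence.
\end{itemize}
For the oscillation of the harmonic part $h_j$ in the Dirichlet representation on $B_{4r}(q_t)$, we argue as in that proof. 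The omitted sources have bounded mass and stay a fixed distance from $B_{3r}(q_t)$. The rescaled smooth Green terms and $\Psi_i$, $\gamma_i$ have bounded derivatives. Hence, subtracting at two points and using \eqref{eq:compact-green-gradient}, we get $\osc_{B_{3r}}h_j\le C$; the additive constant is irrelevant, as the lemma allows.

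One point needs care. Lemma~\ref{lem:local-regular-core} takes $x_j$ to be the maximum of $V_j$ on $\overline{B_{3r}(q)}$, whereas $q_{t,\eps}$ was chosen as a maximum on a fixed small disk about $q_t$. We would shrink $r$ once so that these coincide for large $\eps^{-1}$. This is possible because $v_\eps\to-\infty$ uniformly away from the atom, while $v_\eps(q_{t,\eps})\to+\infty$. Then $\rho_j=e^{-v_\eps(q_{t,\eps})/2}$ and $\widetilde\mu_j=\widetilde m_{t,\eps}$ exactly. Also, \eqref{eq:local-regular-log} is written with $\log(1+|x-x_j|^2/\rho_j^2)=\log(1+e^{v_\eps(q_{t,\eps})}|y-q_{t,\eps}|^2)$, which is precisely \eqref{eq:isolated-core-log}.

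The main obstacle is uniformity of the $O(1)$ constant. The lemma's conclusion is stated along a sequence, after extraction, so we would argue by contradiction. Suppose for some $t_\eps$ and $y_\eps$ the error exceeded $k$ along a sequence $\eps_k\to0$. Since $S$ is finite, we may fix $t$, and a further subsequence realizes the compact-scale picture of Lemma~\ref{lem:compact-cluster-balance}. The constants in Lemma~\ref{lem:local-regular-core} depend only on the uniform bounds $C$, $c_0$, $r$ and on $\theta$, not on the sequence or on a lower bound for $\tau_j^2A_je^{V_j}$. The estimate is therefore uniform across both the Liouville and Chern--Simons regimes, giving a contradiction. If needed, we would fix $\theta=1/2$ in the decay \eqref{eq:regular-core-decay} used inside that proof, so that no $\theta$-dependence enters the final constant.
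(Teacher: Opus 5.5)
Your proposal is correct and follows the paper's proof, which is exactly this application of \eqref{eq:local-regular-log} with $\rho_j=e^{-v_\eps(q_{t,\eps})/2}$ and $\widetilde\mu_j=\widetilde m_{t,\eps}$; the hypotheses come from Lemma~\ref{lem:compact-cluster-balance}, and the maxima are taken on slightly larger isolated-core disks, which plays the same role as your one-time shrinking of $r$. Your subsequence-contradiction argument for uniformity is valid. It does not need the claim that the constants of Lemma~\ref{lem:local-regular-core} depend only on $C$, $c_0$, $r$, $\theta$, and that lemma, being proved by compactness along a sequence, does not quite assert this.
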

\begin{proof}
Apply \eqref{eq:local-regular-log} with
$\rho_j=\exp(-v_\eps(q_{t,\eps})/2)$ and mass
\eqref{eq:first-local-mass}. Lemma~\ref{lem:compact-cluster-balance}
verifies the hypotheses; choose the maxima on slightly larger
isolated-core disks.
\end{proof}

\begin{lemma}[Equal masses and a common profile type]
\label{lem:equal-core-masses}
There is $m\ge4$ such that
\begin{equation}\label{eq:common-local-mass}
 \widetilde m_{t,\eps}\longrightarrow m
 \quad(t=0,\ldots,S-1).
\end{equation}
Either all cores are of mean-field type and $m=4$, or all are of
Chern--Simons type and $m>4$.
\end{lemma}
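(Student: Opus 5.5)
The idea is to compare the values of $v_\eps$ between different cores with the logarithmic comparison of Lemma~\ref{lem:isolated-core-log}, and to use the fact that all cores share the same exterior Green field on the cluster scale. We also use the Chern--Simons constraint, which ties each core's width to its height.

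\textbf{Step 1: matching the exterior field.} On the circle $\partial B_r(q_{t,\eps})$, Lemma~\ref{lem:isolated-core-log} gives
\[
 v_\eps=v_\eps(q_{t,\eps})\Bigl(1-\tfrac{\widetilde m_{t,\eps}}2\Bigr)+O(1).
\]
On the other hand, by \eqref{eq:compact-green-gradient} the oscillation of $v_\eps$ on the compact set formed by the union of these circles (joined by curves avoiding the atoms and zero) is $O(1)$, since the exterior oscillation is bounded. Hence for all $t,s$
\[
 \Bigl(\tfrac{\widetilde m_{t,\eps}}2-1\Bigr)v_\eps(q_{t,\eps})
 =\Bigl(\tfrac{\widetilde m_{s,\eps}}2-1\Bigr)v_\eps(q_{s,\eps})+O(1).
\]
Write $\Lambda_\eps$ for this common value; it tends to $+\infty$ because $v_\eps(q_{t,\eps})\to+\infty$ and $\widetilde m_{t,\eps}\to b_t\ge4$.

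\textbf{Step 2: type dichotomy through heights.} Write $\vartheta_{t,\eps}=v_\eps(q_{t,\eps})+2\log\etae$. This is the physical height up to a bounded term, and it satisfies $\vartheta_{t,\eps}\le -c_{\mathrm{nt}}+O(1)$. Core $t$ is of Chern--Simons type exactly when $\vartheta_{t,\eps}$ stays bounded, and of mean-field type when $\vartheta_{t,\eps}\to-\infty$ (Lemma~\ref{lem:local-regular-core}).

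\emph{Two Chern--Simons cores $t,s$.} Then $v_\eps(q_{t,\eps})=-2\log\etae+O(1)$, and likewise for $s$. Dividing the relation of Step~1 by $-2\log\etae\to\infty$ gives $\widetilde m_{t,\eps}-\widetilde m_{s,\eps}\to0$.

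\emph{A Chern--Simons core $t$ and a mean-field core $s$.} Here $v_\eps(q_{s,\eps})=-2\log\etae-\omega_\eps$ with $\omega_\eps\to\infty$. The relation of Step~1 then gives
\[
 (\widetilde m_t-2)(-2\log\etae)=(\widetilde m_s-2)(-2\log\etae-\omega_\eps)+O(1).
\]
Since $\widetilde m_s\to4<\widetilde m_t$ in the limit, the left side exceeds the right. This is a contradiction provided $\omega_\eps=o(\log\etae^{-1})$. If instead $\omega_\eps$ is comparable to $\log\etae^{-1}$, the right side is even smaller, so the contradiction persists: the left side exceeds the right by at least $c\log\etae^{-1}$. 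So mixed types are impossible.

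\emph{All cores mean-field.} Each limit mass is $4$ by Lemma~\ref{lem:local-regular-core}, so equality holds trivially.

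\textbf{Step 3: conclusion.} We obtain a common limit $m$ after extraction, with $m=4$ in the mean-field case. In the Chern--Simons case $m>4$, since each $b_t>4$.

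\textbf{Main obstacle.} The hard step is justifying the $O(1)$ uniformity in Step~1 on the whole circle $\partial B_r(q_{t,\eps})$. This requires the error in \eqref{eq:isolated-core-log} to be uniform up to radius $r$ and the remaining sources to contribute bounded oscillation there. Both follow from Lemma~\ref{lem:local-regular-core}, the neck exhaustion \eqref{eq:regular-core-exhaustion}, and $c=0$ from Lemma~\ref{lem:compact-cluster-balance}. A second point is that convergence of $\widetilde m_{t,\eps}$ to the limit masses $b_t$ with the correct type requires the exhaustion estimate, so that no neck mass escapes.
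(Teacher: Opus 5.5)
Your proof is correct and is essentially the paper's argument. You evaluate \eqref{eq:isolated-core-log} at a fixed exterior distance, match the cores through the bounded exterior oscillation coming from \eqref{eq:compact-green-gradient}, and split according to whether the heights $\vartheta_{t,\eps}$ stay bounded or tend to $-\infty$: dividing by $\log\etae$ when both cores are of Chern--Simons type, and obtaining a divergent gap in the mixed case. Your case distinction on the size of $\omega_\eps$ is unnecessary, because the difference of the two sides equals $(\widetilde m_{t,\eps}-\widetilde m_{s,\eps})(-2\log\etae)+(\widetilde m_{s,\eps}-2)\omega_\eps$, which is a sum of two terms each tending to $+\infty$.
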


\begin{proof}
As in \cite[Lemma~3.3]{CKL} and
\cite[proof of Theorem~1.1]{LinYanAIHP}, compare
\eqref{eq:isolated-core-log} at fixed exterior distances.
The bounded exterior oscillation and cancellation of the common
term $-2\log\etae$ give
\begin{equation}\label{eq:qualitative-mass-comparison}
 \widetilde m_{t,\eps}\log\etae
 -\frac{\widetilde m_{t,\eps}-2}{2}\vartheta_{t,\eps}
 =\widetilde m_{s,\eps}\log\etae
 -\frac{\widetilde m_{s,\eps}-2}{2}\vartheta_{s,\eps}+O(1).
\end{equation}
After extraction, each $\vartheta_{t,\eps}$ is bounded or tends to
$-\infty$. Regular profile classification gives mass $>4$ in the
first case and mass $4$ in the second. If the $t$-th height is
bounded and the $s$-th tends to $-\infty$, the last identity reads
\[
 (\widetilde m_{t,\eps}-\widetilde m_{s,\eps})\log\etae
 =-\frac{\widetilde m_{s,\eps}-2}{2}\vartheta_{s,\eps}+O(1),
\]
whose two sides tend to opposite infinities. Hence the types cannot
coexist. For bounded heights, division by $\log\etae$ proves
equality of the limiting masses; in the other case every mass is
$4$. In the Chern--Simons case, strict monotonicity of the radial
mass map identifies a common limiting normalized physical height
$\vartheta_{t,\eps}+\log a_\eps(q_{t,\eps})$.
\end{proof}

\begin{proposition}[Mass and polygon identities]\label{prop:polygon}
After rotation and relabeling,
\begin{equation}\label{eq:regular-polygon}
 q_t=e_t:=e^{2\pi\mathrm i t/S},\qquad t=0,\ldots,S-1.
\end{equation}
The common core mass and the full mass at the vortex satisfy
\begin{align}
 m(S-1)&=4N_i,\label{eq:mass-location}\\
 M_{p_i}&=Sm=\frac{4N_iS}{S-1}.\label{eq:local-total-mass}
\end{align}
In particular,
\begin{equation}\label{eq:S-bound}
 1\le S-1\le N_i,
\end{equation}
with equality on the right exactly in the mean-field case.
\end{proposition}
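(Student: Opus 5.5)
From Lemma~\ref{lem:compact-cluster-balance} and Lemma~\ref{lem:equal-core-masses}, the balance system \eqref{eq:weighted-location-system} holds with $c=0$ and equal weights $b_s=m$:
\[
 2n\frac{q_t}{|q_t|^2}=m\sum_{s\ne t}\frac{q_t-q_s}{|q_t-q_s|^2},\qquad t=0,\dots,S-1 .
\]
The plan is to treat this as a complex-analytic point-vortex system. Write $\frac{q_t-q_s}{|q_t-q_s|^2}=\overline{(q_t-q_s)^{-1}}$ and conjugate. The system becomes
\[
 \frac{2n}{q_t}=m\sum_{s\ne t}\frac1{q_t-q_s}.
\]

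\textbf{Step 1: the mass identities.} Multiply the $t$-th equation by $q_t$ and sum over $t$. Pairing the terms $s,t$ gives $\sum_t\sum_{s\ne t}\frac{q_t}{q_t-q_s}=\binom S2$. Hence $2nS=m\,S(S-1)/2$, which is \eqref{eq:mass-location} in the paper's normalization, $m(S-1)=4n$. This is exactly the virial identity \eqref{eq:weighted-cluster-virial} with $c=0$ and equal masses. Then \eqref{eq:local-total-mass} follows once we know the cluster has no exterior neck mass, i.e. $M_{p_i}=\sum_t b_t=Sm$.

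\textbf{Step 2: no exterior neck mass.} I would prove this by a dyadic argument in the spirit of Lemma~\ref{lem:local-regular-core}. Outside $B_R$ in first-scale variables, the Green representation makes $v_\eps$ behave like $-(Sm)\log|y|$ up to the moving-disk errors. Since $Sm-2n\ge 2n+2>2$ by Step 1 and $S\ge2$, $f_\eps\asymp|y|^{2n}e^{v_\eps}$ decays like $|y|^{2n-Sm}$, which is summable. An intermediate blow-up scale $s_\eps$ with $\de\ll s_\eps\to0$ can be excluded by the remark after Lemma~\ref{lem:compact-cluster-balance}. On such a scale the whole present cluster would appear as a mass at zero, and that lemma forces $c=0$, a contradiction. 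Scales beyond $\de$ but bounded are excluded by the choice of $r_{\mathrm{iso}}$ and the maximality in \eqref{eq:nonsimple-max}.

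\textbf{Step 3: the bound \eqref{eq:S-bound}.} $S\ge2$ is already known. Since $m\ge4$, Step 1 gives $S-1=4n/m\le n$, with equality iff $m=4$, i.e. exactly in the mean-field case by Lemma~\ref{lem:equal-core-masses}.

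\textbf{Step 4: the regular polygon.} Set $P(z)=\prod_t(z-q_t)$. Then $\sum_{s\ne t}\frac1{q_t-q_s}=\frac{P''(q_t)}{2P'(q_t)}$, so the system reads
\[
 zP''(z)-\tfrac{4n}{m}P'(z)=0\quad\text{at each } q_t ,
\]
that is, $zP''-(S-1)P'$ vanishes at every root of $P$. This polynomial has degree at most $S-1$, because the $z^{S-1}$ coefficients cancel: $S(S-1)-(S-1)S=0$. It vanishes at $S$ distinct points, so it is identically zero. Writing $P=\sum a_kz^k$, this gives $a_k\,k(k-1-(S-1))=0$, so $a_k=0$ for $1\le k\le S-1$. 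Hence $P(z)=z^S-a_0$. Its roots form a regular $S$-gon centred at $0$, and $|q_0|=1$ together with a rotation yields \eqref{eq:regular-polygon}.

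The main obstacle is Step~2. Steps 1, 3 and 4 are algebra once the balance system is in hand, whereas excluding diffuse mass in the neck between the cluster scale $\de$ and the fixed scale $r_{\mathrm{iso}}$ requires uniform exterior decay estimates.
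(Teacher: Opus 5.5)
Your proposal is correct and follows essentially the same route as the paper. It uses the pairing identity for $m(S-1)=4N_i$ and the polynomial identity $zP''-(S-1)P'\equiv0$ for the polygon. For the neck, it excludes intermediate concentration through the central-mass exclusion of Lemma~\ref{lem:compact-cluster-balance} at the new scale, then applies Green-formula decay with exponent $Sm-\kappa>2n+2$.

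In Step~2, the near-diagonal part of the Green integral is controlled by the uniform bound $v_\eps(y)+2(n+1)\log|y|\le C$, hence $f_\eps\le C|y|^{-2}$, which your intermediate-scale exclusion yields. It is not controlled by a moving-disk smallness, which you have not established. Also, the fixed scales are handled by exterior convergence, not by the maximality in \eqref{eq:nonsimple-max}.
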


\begin{proof}
The balance equation becomes
\begin{equation}\label{eq:location-system}
 2n\frac{q_t}{|q_t|^2}
 -m\sum_{s\ne t}\frac{q_t-q_s}{|q_t-q_s|^2}=0.
\end{equation}
Taking scalar products with $q_t$ and pairing the interactions
gives $2nS=mS(S-1)/2$, proving \eqref{eq:mass-location}.
Identify the plane with $\mathbb C$ and rotate so that $q_0=1$.
Complex conjugation of \eqref{eq:location-system} gives
\begin{equation}\label{eq:complex-location}
 \sum_{s\ne t}\frac{q_t}{q_t-q_s}=\frac{S-1}{2}.
\end{equation}
For $\mathfrak p(z)=\prod_t(z-q_t)$, the relation
$\mathfrak p''(q_t)/(2\mathfrak p'(q_t))=\sum_{s\ne t}(q_t-q_s)^{-1}$ implies that
$z\mathfrak p''-(S-1)\mathfrak p'$ vanishes at all $S$ distinct roots. Its degree is
less than $S$, so it vanishes identically. Thus $\mathfrak p(z)=z^S-1$,
which proves \eqref{eq:regular-polygon}.

It remains to identify the mass at the original vortex, including
possible larger scales. Fix $R_0>2$ containing the finite rescaled
centers. We first claim
\begin{equation}\label{eq:exterior-spherical-bound}
 v_\eps(y)+2(n+1)\log|y|\le C,
 \qquad R_0\le|y|\le2r_{\mathrm{iso}}/\de.
\end{equation}
Otherwise there are $y_\eps$, $T_\eps=|y_\eps|\to\infty$, with
$\de T_\eps\to0$ and
$v_\eps(y_\eps)+2(n+1)\log T_\eps\to\infty$; the two scale
limits follow from compact-scale and original-scale exterior
convergence. The further rescaling
\(g_\eps(z)=v_\eps(T_\eps z)+2(n+1)\log T_\eps\)
is taken at the physical scale $\ell_\eps=\de T_\eps\to0$.
To keep all coefficient factors explicit, put
\[
 \widehat\eta_\eps=\frac{\etae}{T_\eps}=\frac{\eps}{\ell_\eps},
 \qquad
 \widehat a_\eps(z)=T_\eps^{-2n}a_\eps(T_\eps z)
 =|z|^{2n}\exp\{\mathcal K_i^{\comp}(p_i+\ell_\eps z)
                         -\mathcal K_i^{\comp}(p_i)\}.
\]
The exact rescaled equation and density are
\begin{equation}\label{eq:exterior-rescaled-equation}
 \begin{split}
 \Delta g_\eps+\widehat f_\eps&=0,\\
 \widehat f_\eps(z):=T_\eps^2f_\eps(T_\eps z)
 &=e^{\varphi_i(\ell_\eps z)}\widehat a_\eps(z)e^{g_\eps(z)}
           (1-\widehat\eta_\eps^2\widehat a_\eps(z)e^{g_\eps(z)}).
 \end{split}
\end{equation}
In particular $\widehat\eta_\eps\to0$, the negative upper bound
on the physical solution is preserved, and the total density
mass is unchanged by this rescaling. The assumed failure yields
a regular concentration point on the unit circle. The old
cluster collapses to zero and supplies a normalized central mass
\[
 \widehat c:=\lim_{\varrho\downarrow0}\lim_{\eps\to0}
       \frac1{2\pi}\int_{B_\varrho}\widehat f_\eps
       \ge\sum_t b_t=Sm>0.
\]
Indeed, for fixed $\varrho>0$ the corresponding disk
$B_{\varrho T_\eps}$ eventually contains every fixed disk
surrounding the old cluster. The local coefficient hypotheses of
Lemma~\ref{lem:compact-cluster-balance} hold with $\de$ replaced
by $\ell_\eps$, as noted after its proof. That lemma forces
$\widehat c=0$, a contradiction. This uses its weighted
central-mass exclusion, not an energy identity for the old
cluster at all larger scales. It proves
\eqref{eq:exterior-spherical-bound}.

We now exclude diffuse neck mass as well. The global Green formula,
localized on $B_{2r_{\mathrm{iso}}}$ and evaluated on $B_{r_{\mathrm{iso}}}$, gives
\begin{equation}\label{eq:exterior-green-difference}
 v_\eps(y)-v_\eps(0)
 =\frac1{2\pi}\int_{B_{2r_{\mathrm{iso}}/\de}}
       \log\frac{|z|}{|y-z|}f_\eps(z)\,dz+O(1),
 \qquad |y|\le r_{\mathrm{iso}}/\de.
\end{equation}
The remainder is uniform: the regular Green part and the fixed
regularizations are smooth, and the omitted sources are a fixed
positive distance from the evaluation disk. For $|y|\ge2R_0$,
split the integral into $|z|\le R_0$, the set
$|z|>R_0$, $|z-y|<|y|/2$, and its complement. The first term is
\[
 -\widetilde M_\eps(R_0)\log|y|+O(1),\qquad
 \widetilde M_\eps(R_0):=\frac1{2\pi}\int_{B_{R_0}}f_\eps
                          \longrightarrow Sm.
\]
Its logarithmic moment at zero is bounded because
$v_\eps\to-\infty$ uniformly near zero. On the second set,
\eqref{eq:exterior-spherical-bound} gives $f_\eps(z)\le C|z|^{-2}$;
rescaling by $|y|$ bounds the integrable logarithmic singularity.
On the last set, $|z|\le3|z-y|$, and the upper bound follows from
the total mass. Consequently, for each fixed $\kappa>0$ and all
sufficiently small $\eps$,
\begin{equation}\label{eq:exterior-fixed-loss}
 v_\eps(y)\le v_\eps(0)-(Sm-\kappa)\log|y|+C_\kappa,
 \qquad 2R_0\le|y|\le r_{\mathrm{iso}}/\de.
\end{equation}
Here the fixed loss is retained before multiplying by $\log|y|$;
no rate for $\widetilde M_\eps(R_0)\to Sm$ is needed.
Since $Sm=4n+m$ and $m\ge4$, choose
$0<\kappa<Sm-2n-2$. Then
\begin{equation}\label{eq:complete-cluster-mass}
 \int_{B_{r_{\mathrm{iso}}/\de}\setminus B_{2R_0}}f_\eps
 \le C_\kappa e^{v_\eps(0)}
       \int_{2R_0}^\infty t^{2n-Sm+\kappa+1}\,dt
 \longrightarrow0.
\end{equation}
On the fixed inner disk, the measure convergence gives
\(\frac1{2\pi}\int_{B_{2R_0}}f_\eps=Sm+o_\eps(1).\)
Combining this with \eqref{eq:complete-cluster-mass} and then
returning to the original coordinates proves
\eqref{eq:local-total-mass}. Finally, \eqref{eq:S-bound} follows
from $m\ge4$, with equality precisely when $m=4$.
\end{proof}

\begin{remark}
The argument applies to every real strength $n>0$ and to the
surface setting above. Away from zero the weight is smooth; at
zero the radial computation uses
$y\cdot\nabla\log|y|^{2n}=2n$ and follows by integration on
punctured disks. Smooth metric terms disappear under rescaling.
In particular, mean-field clusters require $S=n+1$, hence an
integer strength $n$, whereas finite-height clusters have $S<n+1$.
\end{remark}

\begin{proof}[Proof of Theorem~\ref{thm:ratio-free-four-unit-gap}]
For a non-simple cluster at $p_i$, Proposition~\ref{prop:polygon} gives
$k=S-1\in\mathscr A_i$ and $M_{p_i}=M_{i,k}$.
Every additional blow-up point has mass at least $4$ by
\eqref{eq:measure-limit}, while the total mass is $2N_{\tot}$ by
\eqref{eq:global-mass}. Hence
\[
 2N_{\tot}-M_{i,k}=0\quad\hbox{or}\quad
 2N_{\tot}-M_{i,k}\ge4.
\]
This is \eqref{eq:necessary-alternative} and contradicts
\eqref{eq:ratio-free-four-unit-gap}. Hence all singular blow-up
points are simple. Near a regular blow-up point, choose a vortex-free
conformal chart and a smooth $\Gamma$ with $\Delta\Gamma=e^\varphi/|M|_g$.
Set
\[
 V_\eps=w_\eps-4\pi N_{\tot}\Gamma,\qquad
 A=e^{u_0+4\pi N_{\tot}\Gamma},\qquad B=e^\varphi.
\]
Then $\Delta\log A=0$, and $V_\eps$ satisfies the equation of
Lemma~\ref{lem:local-regular-core} with $\tau=\eps$.
The global Green formula bounds the harmonic oscillation.
Adding back the fixed smooth term in its logarithmic estimate
gives regular simplicity at a nearby local maximum of $w_\eps$.
\end{proof}

\section{Quantitative estimates for finite-height cores}
\label{sec:pointwise}

We quantify finite-height cores in a collapsing non-simple cluster.
The proof of Theorem~\ref{prop:refined-profile} and its moment-sampling
consequences are in Appendix~\ref{app:pointwise-proofs}.

Under Assumption~\ref{ass:profile}, use the first-scale equation,
centers $q_{t,\eps}$, and actual masses $\widetilde m_{t,\eps}$ from
Section~\ref{sec:cluster}. Choose a fixed radius $r>0$ such that
the isolated-core disks $B_r(q_{t,\eps})$ are pairwise disjoint. Let $\widetilde{\mathfrak h}_{t,\eps}$ be the harmonic
extension of $v_\eps-\fint_{\partial B_r(q_{t,\eps})}v_\eps\,dS$.
Let $q_{t,\eps}^*$ maximize
$v_\eps-\widetilde{\mathfrak h}_{t,\eps}$ on the disk, and put
\[
 \mathfrak h_{t,\eps}
 =\widetilde{\mathfrak h}_{t,\eps}
  -\widetilde{\mathfrak h}_{t,\eps}(q_{t,\eps}^*),
 \qquad
 \mathfrak u_{t,\eps}=v_\eps-\mathfrak h_{t,\eps}.
\]
Thus $\mathfrak u_{t,\eps}$ has constant boundary trace and
$\mathfrak h_{t,\eps}(q_{t,\eps}^*)=0$. Define
\[
 \begin{aligned}
 H_{t,\eps}=a_\eps e^{\mathfrak h_{t,\eps}},
 \ \ s_{t,\eps}=\etae e^{-\varphi_i(\de q_{t,\eps}^*)/2},\ \  
 \alpha_{t,\eps}
 =v_\eps(q_{t,\eps}^*)+2\log\etae
                         +\log a_\eps(q_{t,\eps}^*),
 \ \ m_{t,\eps}=m(\alpha_{t,\eps}),
 \end{aligned}
\]
and
\[
 \begin{aligned}
 \widehat v_{t,\eps}(z)
  =\mathfrak u_{t,\eps}(q_{t,\eps}^*+s_{t,\eps}z)
             +2\log\etae+\log H_{t,\eps}(q_{t,\eps}^*),\ \
 U_{t,\eps} =U_{\alpha_{t,\eps}},\ \
 W_{t,\eps}=\widehat v_{t,\eps}-U_{t,\eps}.
 \end{aligned}
\]
Here $m_*>4$ is the uniform lower bound chosen in
\eqref{eq:profile-hyp}. Appendix~\ref{app:core-normalization} verifies
these normalizations and the preliminary estimates.

\begin{theorem}[Pointwise approximation at each Chern--Simons bubble]
\label{prop:refined-profile}
Assume the finite-height cluster setting and the local normalizations above.
Fix
\begin{equation}
  0<\sigma<\min\{1,m_*-4\}.
  \label{eq:sigma-choice}
\end{equation}
Uniformly for $t=0,\dots,S-1$, the following assertions hold.

\smallskip
\noindent\textup{(i)}   
For $|z|\le r/(2s_{t,\eps})$, we have
\begin{equation}\label{eq:global-pointwise-O2}
 |W_{t,\eps}(z)|+|\nabla W_{t,\eps}(z)|
 \le C\bigl(\etae^2+\etae\de^2\bigr).
\end{equation}
If $\varphi_i$ is constant on the local chart, the term
$\etae\de^2$ in \eqref{eq:global-pointwise-O2} is absent.

\smallskip
\noindent\textup{(ii)} \(
  \left|
  \nabla\log H_{t,\eps}(q_{t,\eps}^*)
  +\frac{m_{t,\eps}}4
   \de\nabla_y\varphi_i(y)\Big|_{y=\de q_{t,\eps}^*}
  \right|
  \le C\etae^2.
\)

\smallskip
\noindent\textup{(iii)}
Writing
$\mathbf{d}_{t,\eps}=q_{t,\eps}^*-q_{t,\eps}$ and
$\kappa_{t,\eps}=F(\alpha_{t,\eps})$, one has
\(
  \mathbf{d}_{t,\eps}
  =-\frac{2s_{t,\eps}^2}{\kappa_{t,\eps}}
  \nabla\mathfrak h_{t,\eps}(q_{t,\eps}^*)
  +O(\etae^3+\etae^4\de^2).
  \)
In particular, $|q_{t,\eps}^*-q_{t,\eps}|\le C\etae^2$.

\smallskip
\noindent\textup{(iv)}
Let
\begin{equation}\label{eq:B-tracefree}
\begin{aligned}
 \mathsf B_{t,\eps}
   &:=D^2\log H_{t,\eps}(q_{t,\eps}^*),
 &\mathsf B_{t,\eps}^{\circ}
   &:=\mathsf B_{t,\eps}-\tfrac12(\tr\mathsf B_{t,\eps})I_2,\\
 \mathsf C_{t,\eps}
   &:=\de^2D^2_y\varphi_i(y)\Big|_{y=\de q^*_{t,\eps}},
 &\mathsf C_{t,\eps}^{\circ}
   &:=\mathsf C_{t,\eps}-\tfrac12(\tr\mathsf C_{t,\eps})I_2.
\end{aligned}
\end{equation}
There is a unique bounded pure second Fourier mode $\Psi_{2,t,\eps}$
satisfying
\begin{equation}\label{eq:Psi2-equation}
\left\{
\begin{aligned}
 L_{\alpha_{t,\eps}}\Psi_{2,t,\eps}
   &=-\tfrac12(z^T\mathsf B_{t,\eps}^{\circ}z)F'(U_{t,\eps}(z))
     -\tfrac12(z^T\mathsf C_{t,\eps}^{\circ}z)F(U_{t,\eps}(z)),\\
 \Psi_{2,t,\eps}(0)&=0,\qquad \nabla\Psi_{2,t,\eps}(0)=0.
\end{aligned}
\right.
\end{equation}
With the conformal first Fourier correction $\Xi_{1,t,\eps}$
defined in \eqref{eq:Xi1-definition} of
Appendix~\ref{app:pointwise-proof},
for $|z|\le 3r/(4s_{t,\eps})$ we have
\begin{equation}\label{eq:second-mode-refinement}
 \left|\widehat v_{t,\eps}(z)-U_{t,\eps}(z)
       -\Xi_{1,t,\eps}(z)-s_{t,\eps}^2\Psi_{2,t,\eps}(z)\right|
 \le C\bigl(\etae^{2+\sigma}+\etae^2\de^2\bigr)(1+|z|)^\sigma.
\end{equation}

\smallskip
\noindent\textup{(v)} We have
\begin{equation}
  \widetilde m_{t,\eps}=m_{t,\eps}
  +O(\etae^{2+\sigma}+\etae^2\de^2).
  \label{eq:local-mass-refined}
\end{equation}
\end{theorem}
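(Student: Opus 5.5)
We work in the rescaled variable $z$ and derive the exact equation for $W_{t,\eps}=\widehat v_{t,\eps}-U_{t,\eps}$. By construction $\mathfrak u_{t,\eps}$ has constant trace on $\partial B_r(q_{t,\eps})$, and $\log H_{t,\eps}$ is harmonic up to the metric factor. The choices of $s_{t,\eps}$ and $\alpha_{t,\eps}$ make $\widehat v_{t,\eps}(0)=\alpha_{t,\eps}=U_{t,\eps}(0)$, so
\[
 \Delta\widehat v_{t,\eps}+\mathcal E_{t,\eps}(z)\,F(\widehat v_{t,\eps})
 =0,\qquad
 \mathcal E_{t,\eps}(z)=\frac{H_{t,\eps}(q^*_{t,\eps}+s_{t,\eps}z)\,e^{\varphi_i(\de(q^*_{t,\eps}+s_{t,\eps}z))}}{H_{t,\eps}(q^*_{t,\eps})\,e^{\varphi_i(\de q^*_{t,\eps})}},
\]
up to writing the nonlinearity with the coefficient inside both factors of $F$. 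A Taylor expansion of $\log\mathcal E_{t,\eps}$ at $z=0$ gives a linear term $s_{t,\eps}z\cdot(\nabla\log H+\de\nabla\varphi_i)$ and quadratic terms $\tfrac12 s_{t,\eps}^2 z^T(\mathsf B+\mathsf C)z$. The remainders are $O(s_{t,\eps}^3|z|^3)$ and $O(s_{t,\eps}^2\de^2|z|^2)$-type. Since $s_{t,\eps}\asymp\etae$, the linearized equation reads
\[
 L_{\alpha_{t,\eps}}W_{t,\eps}=-\bigl(\text{first-order forcing}\bigr)-\bigl(\text{second-order forcing}\bigr)+N(W_{t,\eps}),
\]
where $N$ is quadratic in $W_{t,\eps}$ times $F''(U)$.

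\textbf{Step 1 (crude bound, part (i)).} We argue by contradiction with the normalized quantity $\Lambda_\eps=\max_{|z|\le r/(2s)}|W_{t,\eps}|/(1+|z|)^\sigma$. Assume $\Lambda_\eps\gg\etae^2+\etae\de^2$, and divide by $\Lambda_\eps$. The Green representation on $B_{r/s}$, subtracted at $0$, together with Lemma~\ref{lem:weighted-green}, gives uniform $(1+|z|)^\sigma$ control of the quotient. The decay $F(U),F'(U)\lesssim(1+|z|)^{-m_*}$ with $m_*>4+\sigma$ makes the weight $(1+|y|)^{-2-\rho}$ available, and Lemma~\ref{lem:local-regular-core} handles the neck. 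The limit is a sublinear kernel element vanishing with its gradient at $0$. The gradient normalization comes from $q^*$ being a maximum of $\mathfrak u$ and from the first-mode balance. Lemma~\ref{thm:linearized-classification} then gives zero, contradicting the normalization. The key input is that the first-order forcing is not $O(\etae^2)$ a priori. We handle it by first proving (ii). Projecting the equation onto $\partial_{x_j}U$ amounts to a Pohozaev/translation identity on $B_{r/(2s)}$. The boundary terms are controlled by the constant trace of $\mathfrak u$ and the logarithmic comparison Lemma~\ref{lem:isolated-core-log}, and yield $\nabla\log H+\tfrac{m}{4}\de\nabla\varphi_i=O(\etae^2)$. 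The factor $m/4$ arises from $\int z_j\partial_jU\,F=-\int F\cdot$ and the relation between $F$ and the primitive $\mathcal P$. Once the linear forcing is known to be $O(\etae^2+\etae\de^2)$, the blow-up argument closes, giving (i) together with the gradient bound by elliptic estimates. The sharper statement for constant $\varphi_i$ is immediate because $\mathsf C$ and the $\de\nabla\varphi$ terms vanish.

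\textbf{Step 2 (part (iii)).} Near $z=0$ we have $\widehat v=U+W$ with $\nabla W(0)=O(\text{first-order terms})$. Since $D^2U(0)=-\tfrac{\kappa}{2}I$ by \eqref{eq:kappa-alpha}, the true maximum of $v_\eps$ sits at $z$ with $-\tfrac\kappa2 z+\nabla(\text{harmonic part})=0$. Undoing the scaling $s$ gives the displayed shift formula. The size $C\etae^2$ follows from $|\nabla\mathfrak h|=O(1)$.

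\textbf{Step 3 (parts (iv) and (v)).} We subtract the explicit first-mode corrector $\Xi_1$, which solves the linearized equation with the linear forcing, and the second-mode corrector $s^2\Psi_2$. The latter exists uniquely by ODE in mode $k=2$: the homogeneous mode-2 solutions grow at least linearly, as in the proof of Lemma~\ref{thm:linearized-classification}, so a bounded particular solution exists by variation of parameters, using the decay of the sources. The trace parts of $\mathsf B$ and $\mathsf C$ are radial. They are absorbed by the choice of height $\alpha$, contributing $\partial_\alpha U$ and hence $O(\etae^2\de^2)$ effects. Repeating the Step 1 contradiction argument on the new remainder gives \eqref{eq:second-mode-refinement}; the residual forcing is cubic, $O(\etae^{3}|z|^3F)$, and quadratic in $W$. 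For (v), we integrate the equation: $2\pi\widetilde m_{t,\eps}=-\oint\partial_\nu\widehat v$ on the boundary. We compare this with $-\oint\partial_\nu U=2\pi m_{t,\eps}+O(R^{-(m-2)})$ and use the gradient bound from (iv). Mode-1 and mode-2 corrections integrate to zero on circles, leaving the claimed error.

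The main obstacle is Step 1 together with (ii): the translation identity must be carried out with boundary errors as small as $O(\etae^2)$, before any pointwise bound on $W$ is available. This requires a bootstrap in which a weaker $o(1)$ bound on $W$ is first fed into the Pohozaev boundary terms.
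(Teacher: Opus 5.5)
Your plan has two genuine gaps: the argument offered for (i) proves only a weaker weighted bound, and the inputs you name for (ii) are too weak and yield $\widetilde m$ rather than $m_{t,\eps}$.

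\textbf{Gap 1: Step 1 cannot give the unweighted bound (i).} A blow-up argument normalized by $\Lambda_\eps=\max|W_{t,\eps}|/(1+|z|)^\sigma$ yields at best $|W_{t,\eps}(z)|\le C(\etae^2+\etae\de^2)(1+|z|)^\sigma$. On $|z|\le r/(2s_{t,\eps})$ the weight reaches $\etae^{-\sigma}$, so near the edge you only get $C(\etae^{2-\sigma}+\etae^{1-\sigma}\de^2)$. That is strictly weaker than (i).

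The weight cannot be dropped inside this argument. Lemma~\ref{lem:weighted-green} needs $\sigma>0$; at $\sigma=0$ the Green difference grows like $\log(1+|z|)$. The uniform bound reflects structure that a blow-up argument at the target rate does not see.

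This is why the paper proves (i) last. It subtracts the explicit bounded correctors $\Xi_{1,t,\eps}=O(\etae\de^2)$ and $s_{t,\eps}^2\Psi_{2,t,\eps}=O(\etae^2)$. It then shows the remainder obeys the weighted bound at the improved rate $\tau_\eps=\etae^{2+\sigma}+\etae^2\de^2$, which is (iv). The extra $\etae^\sigma$ absorbs the weight: $\tau_\eps(1+|z|)^\sigma\le C(\etae^2+\etae\de^2)$. So your Step 1 should claim only a weighted bound, and (i) should be deduced from (iv); as far as I can see, your later steps need only the weighted version.

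The same loss affects your proof of (v). Bounding the boundary flux of the remainder by a gradient estimate on a circle of length $\asymp\etae^{-1}$ costs a factor $\etae^{-\sigma}$. Instead, integrate $\mathcal F_{t,\eps}-F(U_{t,\eps})$ over the disk. Use the angular cancellation of the first and trace-free second modes, together with $\int F'(U)\mathcal T=O(\tau_\eps)$.

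\textbf{Gap 2: the translation identity for (ii) needs more than you supply.} Your route is a legitimate alternative to the paper's. The paper removes the conformal forcing exactly with $Y_\alpha=\frac{\varrho^2}{4}U_\alpha'$; this is where $\frac m4$ enters, since $Y_\alpha\sim-\frac m4\varrho$. It proves $|\mathbf b^{\rm eff}|\le Cs_{t,\eps}$ from the forced $k=1$ mode. It then compares the first Fourier coefficient of $\widehat v_{t,\eps}$ at the boundary radius computed two ways: $-\frac r2\mathbf b^{\rm eff}+O(s_{t,\eps}^2)$ from the mode-one ODE, and $-\frac m{2r}\mathbf d+O(\etae^2)$ from Lemma~\ref{lem:first-moment}.

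In rescaled variables your identity reads $2\pi\widetilde m\,s\,\mathbf b+s\,\mathbf g\int e^{\Gamma}\mathcal P(\widehat v+\ell)\,dz=\text{flux}+\text{errors}$. To conclude $|\mathbf b^{\rm eff}|\le C\etae^2$, every error must be $O(\etae^3)$.

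The flux must be taken on $\partial D_{t,\eps}$, where $\widehat v$ is constant, not on $\partial B_{r/(2s)}$. There it is governed by the first moment $\int(y-q_{t,\eps})f_\eps\,dy=2\pi\widetilde m\,\mathbf d+s\int z\,\mathcal F_{t,\eps}\,dz$. So you need $|\mathbf d|\le C\etae^2$ and $\int(1+|z|)|\mathcal F_{t,\eps}-F(U)|\,dz\le C\etae$. These come from two quantitative preliminary estimates your plan omits: $|\zeta_{t,\eps}|\le C\etae$, from the maximum condition, and $|W_{t,\eps}|\le C\etae(1+|z|)^\sigma$. The logarithmic comparison gives only $O(1)$ errors here, and an $o(1)$ bound on $W$ gives only $o(\etae)$.

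Even then, the identity does not produce (ii) as stated. The $\mathcal P$-integral equals $\frac\pi2\widetilde m^2+O(\etae^2)$ by the exact Pohozaev identity, but only $\frac\pi2 m_{t,\eps}^2+O(\etae)$ by comparison with $U$. The first gives $\mathbf b+\frac{\widetilde m}4\mathbf g=O(\etae^2)$; the second leaves an error $O(\etae\de^2)$. Reaching (ii) with $m_{t,\eps}$ needs $|\widetilde m-m_{t,\eps}|\,\de^2=O(\etae^2)$, which your ordering supplies only at the end.

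A related point: in $\Delta\widehat v+e^{\Gamma}F(\widehat v+\ell)=0$ the metric factor $e^{\Gamma}$ multiplies $F$ from outside, while $\ell$ enters its argument. Placing both inside, as your $\mathcal E_{t,\eps}$ does, would give the combination $\mathbf b+\mathbf g$ instead of $\mathbf b+\frac m4\mathbf g$.
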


The effective gradient and $\Xi_{1,t,\eps}$ account for the surface
geometry, with $\|\Xi_{1,t,\eps}\|_{C^1}\le C\etae\de^2$ on the
rescaled core disk. The periodic construction in
Section~\ref{sec:cs-construction} derives its moments directly from
the projected solution.

\section{Construction of a finite-height Chern--Simons cluster}
\label{sec:cs-construction}

We prove Theorem~\ref{cs:thm:existence} through a projected periodic
problem, independently of Theorem~\ref{prop:refined-profile}.
Lemma~\ref{cs:lem:moments} derives the required signed moments directly
from the projected solution, including the decisive second angular moment.
Isotropic moments cancel against harmonic fields as in
Proposition~\ref{prop:moment-sampling}.

\subsection{Normalization for the periodic construction}

Use the torus, points $a,e$, Green function $G$, and background
$u_0$ of Theorem~\ref{cs:thm:existence}; see
\eqref{cs:eq:green}--\eqref{cs:eq:u0}. Locally,
\[
 G(x,0)=-\frac1{2\pi}\log|x|+R(x),\qquad \Delta R=1,
 \qquad \Delta u_0=8\pi\delta_0+24\pi\delta_a-32\pi.
\]
The profile in the theorem is $U=U_{\alpha_8}$, where
$m(\alpha_8)=8$ in the notation of Section~\ref{sec:setting}.
Define $\Phi$ and $\mathfrak b>0$ by Lemma~\ref{cs:lem:profile} below, and put
\begin{equation}\label{cs:eq:lambda}
 \lambda=24\pi\bigl(G_{xx}(a,0)-\tfrac12\bigr),\qquad
 d_*=\left(\frac{\mathfrak b}{2\lambda}\right)^{1/6}.
\end{equation}

We use the periodic matching and projected fixed-point scheme of
\cite[Section~3, especially Proposition~3.3]{LinYanAIHP},
establishing below the inverse and interaction estimates uniformly
as the centers approach one another.

\subsection{Two properties of the entire profile}

The radial family and $m'(\alpha_8)>0$ are recalled in
Section~\ref{sec:setting}; see \cite[pp.~840--841 and Lemma~2.4]{CKL}.
With $c_8=c_\infty(\alpha_8)$, the tail estimates of
Section~\ref{sec:linearized} specialize to
\begin{equation}\label{cs:eq:tails}
 U(r)=-8\log r+c_8+O(r^{-6}),\quad
 U'(r)=-8r^{-1}+O(r^{-7}),\quad
 |F^{(j)}(U(r))|\le C_j(1+r)^{-8}.
\end{equation}
Here the last inequality is used for $j\ge0$ in a fixed finite range.

\begin{lemma}\label{cs:lem:profile}
The bounded kernel of $\Delta+F'(U)$ on $\R^2$ is
$\operatorname{span}\{\partial_1U,\partial_2U\}$. The radial function
$Z_0=\left.\partial_\alpha U_\alpha\right|_{\alpha=\alpha_8}/m'(\alpha_8)$ satisfies
\begin{equation}\label{cs:eq:radial-kernel}
 Z_0(r)=-\log r+O(1),\quad Z_0(0)\ne0,\quad
 \int_{\R^2}F'(U)Z_0\dd=2\pi.
\end{equation}
There is a positive regular solution of
\begin{equation}\label{cs:eq:phi}
 \Phi''+r^{-1}\Phi'-4r^{-2}\Phi+F'(U)\Phi=0
\end{equation}
normalized by
\[
 \Phi(r)=r^2+\mathfrak b r^{-2}+O(r^{-4})\quad(r\to\infty).
\]
Its coefficient is
\begin{equation}\label{cs:eq:beta}
 \mathfrak b=\frac14\int_0^\infty r^3F'(U)\Phi\,dr>0.
\end{equation}
\end{lemma}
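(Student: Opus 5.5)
The lemma has three parts, and we take them in order: the bounded kernel, the radial element $Z_0$, and the second-mode solution $\Phi$ together with its coefficient $\mathfrak b$.

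\emph{Bounded kernel.} We specialize Lemma~\ref{thm:linearized-classification} to $\alpha=\alpha_8$. A bounded kernel element is sublinear, so it is a combination $c_0\partial_\alpha U_\alpha+c_1\partial_1U+c_2\partial_2U$. The radial element has logarithmic coefficient $-m'(\alpha_8)\neq0$, so it is unbounded and $c_0=0$. The translational derivatives decay like $8/r$ by \eqref{cs:eq:tails}, so they are bounded.

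\emph{The radial element $Z_0$.} Normalizing by $m'(\alpha_8)$ gives $Z_0=-\log r+O(1)$, and $Z_0(0)=1/m'(\alpha_8)\neq0$. For the integral identity, integrate $\Delta Z_0=-F'(U)Z_0$ over $B_R$:
\[
\int_{B_R}F'(U)Z_0=-2\pi R\,Z_0'(R)\to 2\pi .
\]
This uses the tail bound $Z_0'=-1/r+O(r^{-7})$. That bound follows by differentiating $U_\alpha(r)=-m(\alpha)\log r+c_\infty(\alpha)+O(r^{2-m})$ in $\alpha$, or more directly from the ODE with the decaying source $F'(U)Z_0=O(r^{-8}\log r)$.

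\emph{The second-mode solution $\Phi$.} Let $h$ be the regular solution of \eqref{cs:eq:phi} with $h\sim r^2$ at zero. The proof of Lemma~\ref{thm:linearized-classification} for $k=2$ shows that $h>0$ for all $r$. Since $F'(U)=O(r^{-8})$, the equation is an integrable perturbation of the Euler equation with solutions $r^{\pm2}$. Variation of parameters therefore gives
\[
h=Ar^2+Br^{-2}+O(r^{-4}), \qquad A=\lim_{r\to\infty} r^{-2}h .
\]
Two facts about $A$ are needed.

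First, $A>0$. Using $\psi=-U'$ and \eqref{eq:higher-mode-comparison}, we have $r\psi^2(h/\psi)'\ge c>0$, and $\psi\sim 8/r$. This gives $(h/\psi)'\gtrsim r$, hence $h/\psi\gtrsim r^2$ and $h\gtrsim r$. That alone is not $r^2$ growth, but combined with the asymptotic form above it forces $A\ge0$. To get $A\neq0$: if $A=0$, then $h=O(r^{-2})$ would decay, contradicting $h\gtrsim r$.

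Second, the formula for $B$. Apply Green's identity to $h$ and $r^{\pm2}$: with $W(f,g)=fg'-f'g$,
\[
\bigl(r\,W(r^2,h)\bigr)'=-r^3F'(U)h, \qquad \bigl(r\,W(r^{-2},h)\bigr)'=-r^{-1}F'(U)h .
\]
At infinity, $r\,W(r^2,h)\to-4B$. At $0$, $r\,W(r^2,h)\to0$ because $h\sim r^2$. Integrating from $0$ to $\infty$ gives
\[
4B=\int_0^\infty r^3F'(U)h\,dr .
\]
Setting $\Phi=h/A$ yields the normalization and \eqref{cs:eq:beta}.

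\emph{Main obstacle: the sign $\mathfrak b>0$.} Here $F'(U)=e^U(1-2e^U)$ changes sign wherever $e^U>1/2$. This can happen, since $\alpha_8<0$ only guarantees $e^{\alpha_8}<1$, so $\int r^3F'(U)\Phi$ is not obviously positive. My first attempt would use the comparison function $\psi=-U'$ together with a Wronskian/Sturm argument. One has
\[
\psi''+r^{-1}\psi'-r^{-2}\psi+F'(U)\psi=0,
\]
and $r\psi=8+O(r^{-6})$. Let $\mathcal W=r\,W(\psi,\Phi)$. Then
\[
\mathcal W'=3r^{-1}\psi\Phi>0, \qquad \mathcal W(0)=0 .
\]
Compute $\mathcal W$ at infinity using $\psi=8/r+O(r^{-7})$ and $\Phi=r^2+\mathfrak b r^{-2}+O(r^{-4})$. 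This gives
\[
\mathcal W(\infty)=24+O(r^{-4})\text{ terms}\to 24, \qquad \int_0^\infty 3r^{-1}\psi\Phi\,dr=24 .
\]
That is an identity, not a sign condition, so this route may need refinement. The fallback is a second Wronskian, with $r^2$ replaced by the explicit function $r\psi\cdot r$ or by $rU'$-type combinations, so that the sign of $\mathfrak b$ is extracted from positivity of $\Phi$ and monotonicity of $U$. I expect this sign argument to be the genuine difficulty; the other parts are routine ODE asymptotics.
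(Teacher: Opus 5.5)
Everything except the sign of $\mathfrak b$ is correct and follows the paper. This covers the bounded-kernel reduction via Lemma~\ref{thm:linearized-classification} and the normalization of $Z_0$. Your flux computation is equivalent to the paper's differentiation of $2\pi m(\alpha)=\int F(U_\alpha)$ in $\alpha$. Note that the ODE alone only shows that $rZ_0'$ has a limit; the value $-1$ must come from the $\alpha$-derivative, as in your first justification. It also covers the positivity of the $r^2$ coefficient and the identity $4\mathfrak b=\int_0^\infty r^3F'(U)\Phi\,dr$.

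The genuine gap is $\mathfrak b>0$, which you leave unproved. This is the substantive claim of the lemma: it is what makes $d_*=(\mathfrak b/(2\lambda))^{1/6}$ exist. Your Wronskian attempt also contains a slip. Since $\psi\Phi'-\psi'\Phi\to16+8=24$, you get $\mathcal W(r)=24r+O(r^{-3})\to\infty$, and $\int_0^\infty3r^{-1}\psi\Phi\,dr$ diverges. Moreover, $\mathfrak b$ enters only the $r^{-3}$ term, so no sign can be read off this way.

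The missing step is a single integration by parts that replaces the sign-changing weight $F'(U)$ by the positive weight $F(U)$. It uses $\tfrac{d}{dr}F(U(r))=F'(U)U'=-F'(U)\psi$:
\[
4\mathfrak b=\int_0^\infty r^3F'(U)\psi\,\frac{\Phi}{\psi}\,dr
=-\Bigl[r^3F(U)\frac{\Phi}{\psi}\Bigr]_0^\infty
+\int_0^\infty F(U)\Bigl\{3r^2\frac{\Phi}{\psi}+r^3\Bigl(\frac{\Phi}{\psi}\Bigr)'\Bigr\}\,dr .
\]
The boundary term vanishes at both ends:
\begin{itemize}
\item at $0$ it is $O(r^4)$, since $\Phi=O(r^2)$ and $\psi\asymp r$;
\item at infinity it is $O(r^{-2})$, since $F(U)=O(r^{-8})$, $\Phi\asymp r^2$, and $\psi\asymp r^{-1}$.
\end{itemize}
Every factor in the integrand is positive:
\begin{itemize}
\item $F(U)=e^U(1-e^U)>0$ because $U<0$;
\item $\Phi/\psi>0$;
\item $(\Phi/\psi)'=\mathcal W/(r\psi^2)>0$, by precisely your identity $\mathcal W'=3r^{-1}\psi\Phi>0$ with $\mathcal W(0)=0$.
\end{itemize}
Hence $\mathfrak b>0$. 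You had all the ingredients except this rewriting, which is how the paper concludes.
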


\begin{proof}
Differentiating the mass identity and the radial equation gives
\eqref{cs:eq:radial-kernel}. In particular the radial kernel is
unbounded, so Lemma~\ref{thm:linearized-classification} gives the
bounded-kernel assertion.

For the regular second mode, \eqref{eq:higher-mode-comparison} gives
$\Phi/\psi>0$ and $(\Phi/\psi)'>0$, where $\psi=-U'>0$.
Its growing $r^2$ coefficient is positive: otherwise $\Phi$ changes
sign or $\Phi/\psi$ tends to zero. Normalize this coefficient to one.
The decay of $F'(U)$ gives the displayed expansion for $\Phi$.
Integrating
\((r^3\Phi'-2r^2\Phi)'=-r^3F'(U)\Phi\)
gives the formula for $\mathfrak b$. Moreover,
$d F(U)/dr=-F'(U)\psi$, and therefore
\begin{align*}
 4\mathfrak b
 &=-\left[r^3F(U)\frac{\Phi}{\psi}\right]_0^\infty
   +\int_0^\infty F(U)
              \left(r^3\frac{\Phi}{\psi}\right)'dr =\int_0^\infty F(U)
       \left\{3r^2\frac{\Phi}{\psi}
                +r^3\left(\frac{\Phi}{\psi}\right)'\right\}dr>0.
\end{align*}
Here $F(U)>0$, $\Phi/\psi>0$, and $(\Phi/\psi)'>0$.
The boundary term vanishes: it is $O(r^4)$ at zero
($\Phi=O(r^2)$, $\psi\asymp r$) and $O(r^{-2})$ at infinity
($F(U)=O(r^{-8})$, $\Phi\asymp r^2$, $\psi\asymp r^{-1}$).
\end{proof}

\subsection{The approximate solution and its residual}

Fix a compact interval $\mathcal D\Subset(0,\infty)$ containing $d_*$ in its
interior. Throughout the construction,
\begin{equation}\label{cs:eq:parameters}
 \delta=d\eps^{2/3},\quad d\in\mathcal D,\quad
 \eta=\eps/\delta,\quad L=1+|\log\eps|,\quad q_\pm=\pm\delta e.
\end{equation}
Here $d$ denotes scalar separation, whereas $\mathbf d$ in
Section~\ref{sec:pointwise} denotes a center shift.
Thus $\delta\asymp\eta^2$; all estimates below are uniform for $d\in\mathcal D$.

Choose a fixed $0<r_c<1/100$ and a smooth radial cutoff $\chi$ with
$0\le\chi\le1$, $\chi=1$ on $[0,1]$, and $\chi=0$ on $[2,\infty)$.
Set $\chi_j(x)=\chi(|x-q_j|/(r_c\delta))$. For opposite indices $j,l$ put
\begin{align}
 B_j(x)&=u_0(x)+16\pi R(x-q_j)+16\pi G(x,q_l),\label{cs:eq:B}\\
 C_{\eps,\delta}&=8\log\eps+c_8-B_j(q_j),\label{cs:eq:C}\\
 O(x)&=C_{\eps,\delta}+u_0(x)+16\pi G(x,q_+)+16\pi G(x,q_-),\label{cs:eq:outer}\\
 I_j(x)&=U((x-q_j)/\eps)+B_j(x)-B_j(q_j),\label{cs:eq:inner}\\
 u^{\mathrm{app}}_{\eps,\delta}&=O+\sum_{j=\pm}\chi_j(I_j-O).\label{cs:eq:app}
\end{align}
Reflection makes \eqref{cs:eq:C} independent of $j$.
The poles at $q_j$ cancel, so $u^{\mathrm{app}}_{\eps,\delta}-u_0$ is smooth and periodic.

Since $\Delta B_j=0$ near $q_j$, the inner residual contains only the
nonlinear change caused by $B_j-B_j(q_j)$. Writing $h_0=u_0-4\log|x|$
near zero, set
\(\calA=D^2h_0(0)+32\pi D^2R(0) =24\pi\{D^2R(0)-D^2G(a,0)\}.\)
The singular linear terms cancel:
\begin{equation}\label{cs:eq:Bgradient}
 \nabla B_+(q_+)=\delta\calA e+O(\delta^3).
\end{equation}
Consequently, for $r=|z|\le r_c/\eta$,
\begin{equation}\label{cs:eq:Bbound}
 |B_j(q_j+\eps z)-B_j(q_j)|
       \le C(\eps\delta r+\eta^2r^2)
       \le C\eta^2(1+r^2).
\end{equation}
Also $C_{\eps,\delta}=8\log\eps+4\log\delta+O(1)$.

Use the weight and norm
\begin{equation}\label{cs:eq:weight}
 \omega_\eps(x)=\sum_{j=\pm}
       \eps^{-2}(1+\dist(x,q_j)/\eps)^{-5},
 \qquad \|h\|_*=\sup_\T |h|/\omega_\eps.
\end{equation}
In particular $\int_\T\omega_\eps\le C$, and for $s\ge3\delta$,
\begin{equation}\label{cs:eq:weight-tail}
 \int_{\T\setminus B_s(0)}\omega_\eps\le C(\eps/s)^3.
\end{equation}

\begin{lemma}\label{cs:lem:residual}
Let
\(
 E=\Delta u^{\mathrm{app}}_{\eps,\delta}+\eps^{-2}F(u^{\mathrm{app}}_{\eps,\delta})
                  -8\pi\delta_0-24\pi\delta_a.
\) Then
\begin{equation}\label{cs:eq:residual}
 \|E\|_*\le C\eta^2,\qquad
 \|\eps^{-2}e^{u^{\mathrm{app}}_{\eps,\delta}}\|_*\le C.
\end{equation}
Moreover,
\begin{equation}\label{cs:eq:app-tail}
 \int_{\T\setminus\bigcup_jB_{r_c\delta}(q_j)}
                \eps^{-2}e^{u^{\mathrm{app}}_{\eps,\delta}}\dd\le C\eta^6,
 \qquad \sup_\T u^{\mathrm{app}}_{\eps,\delta}\longrightarrow U(0)<0.
\end{equation}
\end{lemma}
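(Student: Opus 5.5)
The plan is to split $\T$ into three regions: the inner disks $B_{r_c\delta}(q_j)$ where $\chi_j=1$, the transition annuli $r_c\delta\le|x-q_j|\le2r_c\delta$, and the outer region where $u^{\mathrm{app}}_{\eps,\delta}=O$. In each region I would compute $E$ from the defining formulas \eqref{cs:eq:B}--\eqref{cs:eq:app}. The Dirac masses are absorbed exactly by the singularities of $u_0$, and the measures $16\pi\delta_{q_j}$ carried by $O$ are replaced by the smooth profile cores.

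\emph{Inner region.} Here $u^{\mathrm{app}}=I_j$. Since $\Delta_x U((x-q_j)/\eps)=-\eps^{-2}F(U)$ and $\Delta B_j=0$ on $B_{2r_c\delta}(q_j)$ (the only pole of $u_0$ nearby is at $0$, which is at distance $\delta$; the $-32\pi$ from $u_0$ cancels against $\Delta R=1$ and the $+16\pi$ constants), we get
\[
E=\eps^{-2}\bigl[F(U(z)+\beta(z))-F(U(z))\bigr],\qquad \beta(z)=B_j(q_j+\eps z)-B_j(q_j).
\]
Note that $B_j$ contains $u_0$, and $u_0$ has the logarithmic singularity at $0$, at distance $\delta=|q_j|$. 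On $|x-q_j|\le2r_c\delta$ it is nevertheless smooth, with $D^kB_j=O(\delta^{-k})$ there; this is exactly what \eqref{cs:eq:Bbound} uses, through \eqref{cs:eq:Bgradient}. By the mean value theorem, $|E|\le C\eps^{-2}\sup|F'|\,|\beta|$ along the segment between $U$ and $U+\beta$.

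The key point is that $|\beta|\le C\eta^2(1+r^2)\le C$ for $r\le 2r_c/\eta$, so $U+\beta\le U+C$. Since $|F'(t)|\le Ce^t$, this gives
\[
|E|\le C\eps^{-2}(1+r)^{-8}\eta^2(1+r^2)\le C\eta^2\eps^{-2}(1+r)^{-6},
\]
which is bounded by $C\eta^2\omega_\eps$. The same bound $e^{I_j}\le Ce^{U}\le C(1+r)^{-8}$ yields $\|\eps^{-2}e^{u^{\mathrm{app}}}\|_*\le C$ there, and $\sup I_j\to U(0)$ because $\beta(0)=0$ and $\beta\to0$ locally uniformly.

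\emph{Outer region and matching.} On the outer region, $\Delta O=8\pi\delta_0+24\pi\delta_a-32\pi+32\pi=8\pi\delta_0+24\pi\delta_a$ away from $q_\pm$. Hence $E=\eps^{-2}F(O)$ there, up to $e^O$. I would expand $e^O=\eps^8e^{c_8}\,e^{u_0-B_j(q_j)}|x-q_+|^{-8}|x-q_-|^{-8}e^{O(1)}$ and use $e^{-B_j(q_j)}\asymp\delta^{-4}$ (from $u_0\sim4\log|x|$ near $0$ and $16\pi G(q_j,q_l)\sim-8\log\delta$) together with $e^{u_0}\lesssim|x|^4$. This gives $\eps^{-2}e^{O}\le C\eps^6\delta^{-4}|x|^{4}\prod_j|x-q_j|^{-8}$. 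For $|x|\lesssim\delta$ off the cores this is $C\eps^6/\delta^{8}\cdot(\delta/\dist)^{8}$, comparable to $\omega_\eps\cdot(\eps/\dist)^{3}\ll\eta^2\omega_\eps$. For $|x|\gg\delta$ it is $\le C\eps^6\delta^{-4}|x|^{-12}$, again bounded by $\eta^2\omega_\eps$ since $\eps^{-2}(\eps/|x|)^{5}\eta^{2}\ge\eps^{6}\delta^{-4}|x|^{-12}$ reduces to $|x|^{7}\delta^{-2}\ge \eps\,\delta^{2}\cdots$. I expect this bookkeeping to close with room, since $\eps\ll\delta$.

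On the transition annulus, $I_j-O$ is the difference between $U(r)+8\log(\eps r)-c_8$ and the logarithm plus $R$ corrections. By \eqref{cs:eq:tails} it is $O((\eps/|x-q_j|)^6)=O(\eta^6)$, with derivatives scaled by $\delta^{-1}$ and $\delta^{-2}$, after checking that the constants match via \eqref{cs:eq:C}. The cutoff terms $2\nabla\chi_j\cdot\nabla(I_j-O)+\Delta\chi_j(I_j-O)$ are then $O(\eta^6\delta^{-2})$. Since $\omega_\eps\asymp\eps^{-2}\eta^5$ there, the ratio is $\eta^6\delta^{-2}\eps^{2}\eta^{-5}=\eta^{3}\ll\eta^2$.

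\emph{Integral bound and remaining claims.} The integral bound \eqref{cs:eq:app-tail} follows by integrating the outer and transition bounds. Using \eqref{cs:eq:weight-tail} with $s=r_c\delta$ (and $s=3\delta$ for the far region) gives $(\eps/\delta)^3\eta^{3}=\eta^6$, or more directly from the explicit $(1+r)^{-8}$ decay, $\int_{r\ge r_c/\eta}(1+r)^{-8}r\,dr\asymp\eta^6$. The convergence $\sup u^{\mathrm{app}}\to U(0)$ follows because the outer values are $\le C+8\log\eta\to-\infty$.

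The main obstacle is verifying exact cancellation of the constant and linear terms in the matching. This requires that $C_{\eps,\delta}$ in \eqref{cs:eq:C} makes $O=I_j+O(\eta^6)+O(\eta^2r^2$-harmonic$)$ consistently, and that the quadratic harmonic part of $B_j$ appears identically in $O$ and $I_j$. It should, since both share $u_0+16\pi G(\cdot,q_l)$ and $16\pi G(x,q_j)=-8\log|x-q_j|+16\pi R(x-q_j)$.
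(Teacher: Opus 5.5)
Your three-region split (inner disks, matching annuli, outer region) is the paper's, and your inner and annulus computations are correct. In fact the matching is exact: $I_j-O=U(r)+8\log r-c_8$ with $r=|x-q_j|/\eps$. So no harmonic $O(\eta^2r^2)$ discrepancy survives, and the ``main obstacle'' you flag does not arise.

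The real error is in the outer region, in the size of the additive constant. You write $e^{-B_j(q_j)}\asymp\delta^{-4}$, but your own ingredients give
\[
 B_j(q_j)=u_0(q_j)+16\pi R(0)+16\pi G(q_j,q_l)
 =4\log\delta-8\log(2\delta)+O(1)=-4\log\delta+O(1).
\]
Hence $e^{-B_j(q_j)}\asymp\delta^{4}$ and $e^{C_{\eps,\delta}}\asymp\eps^8\delta^4$, as the paper notes right after \eqref{cs:eq:Bbound}.

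With your exponent the outer estimate fails, not merely stays unfinished:
\begin{itemize}
\item Your far-field bound $\eps^{-2}e^{O}\le C\eps^6\delta^{-4}|x|^{-12}$ has ratio $\eps\delta^{-2}|x|^{-7}$ to $\eta^2\omega_\eps\asymp\eps^5\delta^{-2}|x|^{-5}$. At $|x|=3\delta$ this is of order $\eps\delta^{-9}\asymp\eps^{-5}$, so the inequality you left as ``$|x|^7\delta^{-2}\ge\eps\delta^2\cdots$'' is false.
\item The tail integral in \eqref{cs:eq:app-tail} would be of order $\eps^6\delta^{-14}\to\infty$.
\item The matching would break: on $|x-q_j|\asymp r_c\delta$ the outer field must satisfy $e^{O}\asymp e^{c_8}r^{-8}\asymp\eta^8$, whereas $\delta^{-4}$ gives $\eps^8\delta^{-16}$ there.
\end{itemize}
Your near-cluster value $C\eps^6\dist^{-8}$ was actually computed with $\delta^{+4}$, so those two lines of your outer-region argument are inconsistent with each other.

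With the correct constant, $e^{u_0}\lesssim|x|^4$ near $0$, and $\max_j|x-q_j|\ge\delta$, you get:
\begin{itemize}
\item $\eps^{-2}e^{u^{\mathrm{app}}}\le C\eps^6\dist(x,\{q_\pm\})^{-8}$ for $|x|\le3\delta$ off the core balls (including the annuli, since $I_j-O=O(\eta^6)$ there);
\item $\eps^{-2}e^{u^{\mathrm{app}}}\le C\eps^6\delta^4|x|^{-12}$ for $3\delta\le|x|\le r_0$;
\item $\eps^{-2}e^{u^{\mathrm{app}}}\le C\eps^6\delta^4$ away from zero.
\end{itemize}
In each range the ratio to $\omega_\eps$ is at most $C\eta^3$; for example it is $\eps^3\delta^4|x|^{-7}\le C\eta^3$ in the middle range. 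This gives the outer part of \eqref{cs:eq:residual}. Direct integration gives $C\eps^6\int_{r_c\delta}^\infty t^{-7}\,dt+C\eps^6\delta^4\int_{3\delta}^\infty t^{-11}\,dt\le C\eps^6\delta^{-6}=C\eta^6$, which is \eqref{cs:eq:app-tail}.

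One more point on that integral. The far-field ratio $\eta^2$ you assert, combined with \eqref{cs:eq:weight-tail}, would only give $\eta^5$. You need the $\eta^3$ ratio or the direct integral above. With these corrections your argument coincides with the paper's.
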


\begin{proof}
In $B_{r_c\delta}(q_j)$, \eqref{cs:eq:Bbound} and \eqref{cs:eq:tails} bound
the residual by
$C\eps^{-2}\eta^2(1+r)^{-6}$.
On the matching annulus,
\(I_j-O=U(|x-q_j|/\eps)+8\log|x-q_j|-8\log\eps-c_8.\)
Its derivatives of order $l=0,1,2$ are bounded by
$C\delta^{-l}\eta^6$. The cutoff error is
$O(\delta^{-2}\eta^6)=O(\eps^{-2}\eta^8)$, whereas
$\omega_\eps\ge c\eps^{-2}\eta^5$ there.

Outside the core balls and with $|x|\le3\delta$,
$e^{u^{\mathrm{app}}_{\eps,\delta}}\le C\eta^8$. For $3\delta\le|x|\le r_0$,
\(e^{u^{\mathrm{app}}_{\eps,\delta}}\le C\eps^8\delta^4|x|^{-12},\)
and away from zero the bound is $C\eps^8\delta^4$.
The density vanishes at the extra vortex. These bounds prove
\eqref{cs:eq:residual} and the integral estimate in \eqref{cs:eq:app-tail}.
Inner convergence to $U$ and the same tail bounds give the supremum limit.
\end{proof}

\subsection{A uniform projected inverse for the collapsing pair}\label{cs:sec:inverse}

Let $\mathcal X_{\rm sym}$ be the space of functions even in both torus
coordinates. Fix $R_0>1$, independently of $\eps$, and define
\begin{equation}\label{cs:eq:K}
 Z=\partial_eU,\qquad
 K_j(x)=\eps^{-2}\chi(|x-q_j|/(R_0\eps))Z((x-q_j)/\eps),
 \qquad K=K_+-K_- .
\end{equation}
Then $K\in\mathcal X_{\rm sym}$ and $\int K_j=0$ separately.
Write
\(\mathcal J=\int_{\R^2}\chi(|z|/R_0)Z(z)^2\,dz>0.\)
The orthogonality condition is $\int_\T K\phi=0$.
For an even function it removes the surviving vertical translation
at either core; horizontal translations are odd in the first coordinate.

\begin{lemma}\label{cs:lem:inverse}
For all small $\eps$, the problem
\begin{equation}\label{cs:eq:linear}
 [\Delta+\eps^{-2}F'(u^{\mathrm{app}}_{\eps,\delta})]\phi=h+tK,\qquad
 \phi\in\mathcal X_{\rm sym},\qquad \int_\T K\phi=0
\end{equation}
has a unique pair $(\phi,t)$ for every smooth $h\in\mathcal X_{\rm sym}$.
Uniformly for $d\in\mathcal D$,
\begin{equation}\label{cs:eq:inverse}
 \|\phi\|_\infty\le CL\|h\|_*,\qquad |t|\le C\|h\|_*.
\end{equation}
For continuous $h$, the same estimates hold for the unique
distributional solution $\phi\in W^{2,p}(\T)$ for every
$1<p<\infty$, by approximation and elliptic regularity.
\end{lemma}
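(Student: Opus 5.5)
The plan is the standard Lyapunov--Schmidt a priori estimate, argued by contradiction and blow-up, with the logarithmic loss $L$ tracing the radial mode $Z_0$ that is not removed by the projection. First I would fix the orthogonality parameter $t$. Testing \eqref{cs:eq:linear} against a cut-off version of $Z((x-q_j)/\eps)$ near each core gives the identity
\[
 t\,\mathcal J\,\eps^{-2}\cdot\eps^2(1+o(1))=\int h\,\widetilde Z_j+\int\phi\,[\Delta+\eps^{-2}F'(u^{\mathrm{app}})]\widetilde Z_j .
\]
The operator applied to $\widetilde Z_j$ is $O(\eta^2)$ in $\|\cdot\|_*$, by \eqref{cs:eq:Bbound}, plus cutoff errors supported at scale $\delta$. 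Hence
\[
 |t|\le C\|h\|_*+o(1)\,\|\phi\|_\infty .
\]
The key point is that $o(1)$ must beat $L$. The cutoff of $Z$ should therefore be placed at scale $r_c\delta$, where $Z=O(|z|^{-1})$ and the matching errors are of order $\eta$. Once $\|\phi\|_\infty\le CL\|h\|_*$ is established, this yields $|t|\le C\|h\|_*$. To make that consistent, I would first prove $\|\phi\|_\infty\le CL(\|h\|_*+|t|)$ and then substitute back. This requires showing the $o(1)$ is $O(L^{-1})$, e.g.\ $O(\eta)$.

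Next, to bound $\phi$ in terms of $\tilde h=h+tK$, suppose there are $\eps_n\to0$ and $d_n\in\mathcal D$ with $\|\phi_n\|_\infty=1$ and $L_n\|\tilde h_n\|_*\to0$. I would argue in three regions.

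\emph{Core region.} Rescale $\psi_n(z)=\phi_n(q_++\eps_nz)$. Since $F'(u^{\mathrm{app}})$ converges to $F'(U)$ and $\tilde h$ is small in the weighted norm, a subsequence converges to a bounded kernel element of $\Delta+F'(U)$. By Lemma~\ref{cs:lem:profile} this is a combination of $\partial_1U$ and $\partial_2U$. Evenness in $x_1$ kills $\partial_1U$, and the orthogonality $\int K\phi=0$ (with the reflection symmetry linking the two cores) kills $\partial_2U$. So $\psi_n\to0$ locally uniformly.

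\emph{Outer region.} Away from the cores the potential $\eps^{-2}F'(u^{\mathrm{app}})$ is small in $L^1$ by \eqref{cs:eq:app-tail}. A Green representation on $\T$ then shows $\phi_n$ is close to a constant $c_n$ plus logarithmic contributions $\gamma_\pm\log|x-q_\pm|$.

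\emph{Fixing the constant.} Because $\T$ has no boundary, the constant is not fixed by decay; this is where $L$ enters. Integrating the equation over $\T$ gives
\[
 \int\eps^{-2}F'(u^{\mathrm{app}})\phi=\int\tilde h .
\]
The potential has total integral $\approx\int F'(U)\,dz$ per core, which must be nonzero. If it vanished, I would instead test against $Z_0(\,\cdot/\eps)$ from \eqref{cs:eq:radial-kernel}. Testing with $Z_0((x-q_j)/\eps)$ itself, whose $-\log r$ growth produces the factor $L$ through $\log(\delta/\eps)$ and $|\log\eps|$, determines the coefficient of the radial kernel in the core limit. The limit must then be a multiple of $Z_0$ matching the outer constant. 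Combining the identities, the radial mode is forced to zero, contradicting $\|\phi_n\|_\infty=1$ once a maximum-principle argument propagates smallness from cores and outer constant to the whole torus.

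The main obstacle is this collapsing intermediate region. Between scales $\eps$ and $\delta$ the two cores interact, and the logarithmic radial mode can live across scales $\eps\ll|x|\ll\delta$ and $\delta\ll|x|$. I expect to need a second blow-up at scale $\delta$ around the origin. There the limit equation is $\Delta\phi=0$ on $\R^2$ minus $\{\pm e\}$ with at most logarithmic singularities, and the even symmetry together with boundedness forces it to be constant. That constant then has to be tied to the core values through $Z_0$'s expansion, using the same uniform-in-$d\in\mathcal D$ barrier or Green estimate as Lemma~\ref{lem:weighted-green}.

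Uniqueness follows from the a priori estimate. Existence follows from Fredholm alternative on $\mathcal X_{\rm sym}\cap\{\int K\phi=0\}$. The $W^{2,p}$ statement is standard approximation.
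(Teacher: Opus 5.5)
Your treatment of $t$ (test function cut off at scale $r_c\delta$, error $O(\eta)$, then absorb using $\eta L\to0$), the inner blow-up at scale $\eps$ (bounded kernel, parity, orthogonality), and the Fredholm step agree with the paper. The gap is at the step you flag as the main obstacle: you never supply a mechanism that pins the coefficient of the radial mode $Z_0$, and neither of the two mechanisms you propose works.

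In the contradiction sequence the inner limit is already zero, so $\phi$ is \emph{not} approximately constant on the cores. Its circular average about $q_j$ is approximately $a_{0,j}Z_0(|x-q_j|/\eps)$, with $a_{0,j}\to0$ but $a_{0,j}L$ possibly of order one. This is exactly what would make $\phi$ of size one on the $\delta$ scale while it vanishes on the core scale. Evaluating $\int\eps^{-2}F'(u^{\mathrm{app}})\phi=\int\tilde h$ as ``$\int F'(U)\,dz$ per core times a value of $\phi$'' therefore only recovers $\phi\to0$ at the core scale, which you already know. It says nothing about the constant on the $\delta$ scale.

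Testing against $Z_0((x-q_j)/\eps)$ cannot determine the coefficient either. Since $Z_0$ solves the homogeneous mode-zero equation, the resulting identity $S(\bar\phi_j'Z_0-\bar\phi_jZ_0')(S)=\int_0^S sf_jZ_0\,ds$ is a Wronskian, and the $a_{0,j}Z_0$ component cancels from it identically. The same happens in the local flux identity obtained by integrating over a core disk. Also, a core limit that is a nonzero multiple of $Z_0$ is incompatible with $\|\phi\|_\infty=1$.

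The missing idea is a quantitative mode-zero analysis combined with the fact that $\T$ is closed. Work with the radial reference potential $V_\eps^0$, so that the averaged equation is an exact ODE on $r\le r_c\delta$. Variation of parameters then gives $\bar\phi_j(r)=a_{0,j}Z_0(r/\eps)+T_j(r)$ with $|T_j(r)|\le C\|h\|_*\log(2+r/\eps)$. The reflection $x_2\mapsto-x_2$ gives $a_{0,+}=a_{0,-}=a_0$; this is essential, since one scalar identity cannot control two coefficients.

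Insert this into the exact torus identity $\int_\T V_\eps^0\phi=\int_\T h$, which follows from periodicity and $\int K=0$. Its leading term is $4\pi a_0$ because $\int_{\R^2}F'(U)Z_0=2\pi$, i.e.\ $m'(\alpha_8)\ne0$. This, not $\int F'(U)$, is the relevant nondegeneracy. In flux terms, the $Z_0$ components emit a net flux $-4\pi a_0$ that has nowhere to go on a closed surface. One obtains $|a_0|\le C(\|h\|_*+\eta^6\|\phi\|_\infty)$, hence $\sup_{r\le r_c\delta}|\bar\phi_j|\le CL(\|h\|_*+\eta^6\|\phi\|_\infty)$. This is where the factor $L$ enters.

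These vanishing circular averages then kill the constants in the blow-ups at scales $\eps\ll s\ll\delta$ and $s\sim\delta$. Your outer description (``constant plus $\gamma_\pm\log|x-q_\pm|$'') must be replaced by the exact flux identity $2\pi r\bar\phi_0'(r)=-\int_{\T\setminus B_r(0)}h$ for $3\delta\le r\le r_0$. Together with the weight tail, it bounds the variation of the outer averages by $C\|h\|_*\eta^3$ and excludes a $\log r$ mode.
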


\begin{proof}
Adapting \cite[Appendix~A]{LinYanAIHP}, we explicitly control
radial averages, the global integral, and all intermediate scales.
Consider the radial reference potential
\(
 V_\eps^0=\sum_j\chi_j(x)\eps^{-2}F'(U((x-q_j)/\eps))
\)
and the corresponding projected problem
\begin{equation}\label{cs:eq:reference-linear}
 (\Delta+V_\eps^0)\phi=h+tK,\qquad
 \phi\in\mathcal X_{\rm sym},\qquad \int_\T K\phi=0.
\end{equation}
Until the final perturbation step, $(\phi,t)$ denotes a solution of
\eqref{cs:eq:reference-linear}.
Testing this equation against $\chi_jZ((x-q_j)/\eps)$ gives
\begin{equation}\label{cs:eq:t-linear}
 |t|\le C\{\|h\|_*+\eta\|\phi\|_\infty\}.
\end{equation}
Indeed, $2R_0\eps<r_c\delta$ for small $\eps$, so the test
function equals $Z$ on the support of $K_j$ and the coefficient of
$t$ has absolute value $\mathcal J$. Writing $Z_j(x)=Z((x-q_j)/\eps)$,
the reference operator applied to the test function is
\[
 (\Delta\chi_j)Z_j+2\nabla\chi_j\cdot\nabla Z_j
       +\eps^{-2}(\chi_j^2-\chi_j)F'(U((x-q_j)/\eps))Z_j.
\]
All terms are supported where $|x-q_j|\asymp\delta$.
The first two have $L^1$ norm $O(\eta)$ since
$Z_j=O(\eta)$ and $\nabla Z_j=O(\eps/\delta^2)$; the last has
$L^1$ norm $O(\eps^{-2}\delta^2\eta^9)=O(\eta^7)$ by
\eqref{cs:eq:tails}. This proves \eqref{cs:eq:t-linear}.

We prove the sup-norm estimate by contradiction. Otherwise normalize
$\|\phi\|_\infty=1$ and let $L\|h\|_*\to0$. By
\eqref{cs:eq:t-linear}, $t\to0$. The inner limits solve the bounded
kernel equation. Lemma~\ref{cs:lem:profile}, parity, and orthogonality
show that these limits are zero.

We give the quantitative radial estimate before treating the
intermediate scales. Write $H_* = \|h\|_*$,
$P=\|\phi\|_\infty$, and $R_\eta=r_c/\eta$. For $0\le s\le R_\eta$, set
\(
 y_j(s)=\frac1{2\pi}\int_0^{2\pi}
       \phi(q_j+\eps s(\cos\vartheta,\sin\vartheta))\,d\vartheta.
\)
On this disk the cutoff is one and the other core's potential and
projection are absent. The projection at $q_j$ has zero circular
average. Hence, with $f_j$ the circular average of $\eps^2h$,
\begin{equation}\label{cs:eq:radial-forcing}
 y_j''+s^{-1}y_j'+F'(U)y_j=f_j(s),\qquad
 |f_j(s)|\le CH_*(1+s)^{-5}.
\end{equation}
The contribution from the other center in $\omega_\eps$ obeys the
same bound because $s\le r_c/\eta$.

Choose a fundamental solution $Y_0$ with
$Z_0Y_0'-Z_0'Y_0=1/s$, for example by setting
$Y_0(s_*)=0$ and $Y_0'(s_*)=1/(s_*Z_0(s_*))$ at any
$s_*>0$ where $Z_0(s_*)\ne0$. Continue it by the regular ODE
across the simple zeros of $Z_0$. This avoids integrating the
reduction-of-order expression through a pole.
Near zero,
$Y_0(s)=Z_0(0)^{-1}\log s+O(1)$.
At infinity, $F'(U)=O(s^{-8})$ implies that both solutions are
linear combinations of $1+o(1)$ and $\log s+O(1)$.

With $a_{0,j}=y_j(0)/Z_0(0)$, the globally valid Volterra formula is
\begin{equation}\label{cs:eq:radial-volterra}
 \begin{split}
 y_j(s)&=a_{0,j}Z_0(s)+\mathcal T_j(s),\\
 \mathcal T_j(s)&=Y_0(s)\int_0^s tZ_0(t)f_j(t)\,dt
                  -Z_0(s)\int_0^s tY_0(t)f_j(t)\,dt.
 \end{split}
\end{equation}
Both integrals converge at zero, and this formula contains no
division by $Z_0(s)$. For $0<t\le s$, its kernel satisfies
\[
 |Y_0(s)Z_0(t)-Z_0(s)Y_0(t)|
   \le C\log(2+s)\{1+|\log t|+\log(2+t)\}.
\]
Integrating against $t(1+t)^{-5}$ proves
\begin{equation}\label{cs:eq:radialaverage}
 \bar\phi_j(r)=a_{0,j}Z_0(r/\eps)+T_j(r),\qquad
 |T_j(r)|\le CH_*\log(2+r/\eps),\quad r\le r_c\delta,
\end{equation}
where $T_j(r)=\mathcal T_j(r/\eps)$. Moreover,
\begin{equation}\label{cs:eq:radial-weighted-error}
 \int_0^{R_\eta}|F'(U(s))\mathcal T_j(s)|s\,ds\le CH_*.
\end{equation}
This bound is uniform in the growing upper limit, since
$\int_0^\infty s(1+s)^{-8}\log(2+s)\,ds<\infty$.

Reflection interchanges the cores, so $a_{0,+}=a_{0,-}=a_0$.
Integrate the reference equation on the torus. Periodicity and
$\int K=0$ give exactly $\int V_\eps^0\phi=\int h$.
Separating the disks where $\chi_j=1$ from the cutoff annuli gives
\begin{equation}\label{cs:eq:radial-torus-identity}
 \int_\T h
   =4\pi a_0\int_0^{R_\eta}F'(U(s))Z_0(s)s\,ds
     +2\pi\sum_{j=\pm}\int_0^{R_\eta}
                     F'(U(s))\mathcal T_j(s)s\,ds+E_{\rm cut},
\end{equation}
where
\[
 |E_{\rm cut}|\le CP\int_{R_\eta}^{2R_\eta}s(1+s)^{-8}\,ds
       \le C\eta^6P.
\]
Outside the cutoff disks the reference potential is exactly zero.
Also $|\int h|\le CH_*$, and \eqref{cs:eq:radial-kernel} gives
\[
 \int_0^{R_\eta}F'(U)Z_0s\,ds=1+O(\eta^6L).
\]
Thus \eqref{cs:eq:radial-torus-identity} yields
\[
 4\pi a_0=O(H_*+\eta^6P+|a_0|\eta^6L).
\]
Absorb the last term, using $\eta^6L\to0$, to obtain
\begin{equation}\label{cs:eq:radial-quantitative}
 |a_0|\le C(H_*+\eta^6P),\qquad
 \sup_{r\le r_c\delta}|\bar\phi_j(r)|
       \le CL(H_*+\eta^6P).
\end{equation}
In the contradiction sequence $P=1$ and $LH_*\to0$, so
\begin{equation}\label{cs:eq:avgozero}
 \sup_{r\le r_c\delta}|\bar\phi_j(r)|\longrightarrow0.
\end{equation}
Here reflection makes the two radial coefficients equal, which is
what permits the global identity to control each one.

On the $\delta$ scale the rescaled reference potential is
$O(\eta^6)$ and the rescaled forcing is $O(H_*\eta^3)$ on compact
sets away from $\pm e$; the projection supports shrink to these
two points. Every limit is therefore bounded and harmonic on
$\R^2\setminus\{\pm e\}$. Removability and Liouville's theorem
make it constant. Its average on a circle of fixed radius less
than $r_c$ around either core is zero by \eqref{cs:eq:avgozero}.
Thus the cluster-scale limit is zero.

The outer logarithmic mode is ruled out by an exact flux identity.
For $3\delta\le r\le r_0$, all reference potentials and projection
terms lie in $B_r(0)$. If $\bar\phi_0$ denotes the circular average
about zero, subtraction of the integrated torus equation gives
\begin{equation}\label{cs:eq:outer-flux}
 2\pi r\bar\phi_0'(r)
      =\int_{B_r(0)}h-\int_\T h
      =-\int_{\T\setminus B_r(0)}h.
\end{equation}
Using \eqref{cs:eq:weight-tail} and integrating in $r$ yields
\begin{equation}\label{cs:eq:outer-average}
 |\bar\phi_0(r)-\bar\phi_0(3\delta)|
     \le CH_*\eps^3\int_{3\delta}^{r}s^{-4}\,ds
     \le CH_*\eta^3.
\end{equation}
The average at $3\delta$ tends to zero by the cluster-scale limit.
Consequently all these outer averages tend uniformly to zero;
an independent term proportional to $\log r$ is excluded by
\eqref{cs:eq:outer-flux}.

To pass from local limits to the global supremum, choose points
$x_\eps$ with $|\phi(x_\eps)|=1$. After taking a subsequence,
exactly one of the following alternatives applies:
\begin{enumerate}[label=(\roman*),leftmargin=*]
\item $\dist(x_\eps,\{q_+,q_-\})=O(\eps)$.
The inner limit is zero by the bounded-kernel classification,
parity, and orthogonality proved above.
\item $s=|x_\eps-q_j|$ satisfies $\eps\ll s\ll\delta$ for one core.
After scaling about $q_j$ by $s$, the potential on compact annuli
is $O((\eps/s)^6)$ and the forcing is $O(H_*(\eps/s)^3)$.
The other core leaves every compact set and the projection
contracts to the origin. A bounded harmonic limit on
$\R^2\setminus\{0\}$ is constant, and its unit-circle average is
zero by \eqref{cs:eq:avgozero}.
\item $|x_\eps|=O(\delta)$ and its distance from both cores is
bounded below by a positive multiple of $\delta$.
The cluster-scale limit just established is zero.
\item $s=|x_\eps|$ satisfies $\delta\ll s\ll1$.
The reference potential and projection are supported in a disk
of radius $O(\delta/s)$ after scaling. On compact annuli the
forcing is $O(H_*(\eps/s)^3)$. The bounded harmonic limit on
$\R^2\setminus\{0\}$ is constant with zero average by
\eqref{cs:eq:outer-average}.
\item $|x_\eps|$ stays bounded away from zero.
The limiting function is bounded and harmonic on the punctured
torus, extends across zero, and is constant. Its average on any
fixed circle of radius less than $r_0$ is zero by
\eqref{cs:eq:outer-average}.
\end{enumerate}
In each case elliptic compactness holds near the rescaled maximum
point, which lies away from the listed punctures, so its value
converges to zero. This contradicts $|\phi(x_\eps)|=1$ and proves
$\|\phi\|_\infty\le CLH_*$ for the reference operator.
Equation~\eqref{cs:eq:t-linear} and $\eta L\to0$ give $|t|\le CH_*$.

The augmented map
$(\phi,t)\mapsto((\Delta+V_\eps^0)\phi-tK,\int K\phi)$
is Fredholm of index zero on the even periodic $H^2$ and $L^2$
spaces with one added scalar variable. The estimate makes its
kernel trivial; it is therefore invertible.

We now return to the potential in \eqref{cs:eq:linear}.
The same inner and annular bounds used in Lemma~\ref{cs:lem:residual} give
\(
 \|(\eps^{-2}F'(u^{\mathrm{app}}_{\eps,\delta})-V_\eps^0)\phi\|_*
                      \le C\eta^2\|\phi\|_\infty .
\)
Since $\eta^2L\to0$, a Neumann-series perturbation of the reference
augmented inverse proves \eqref{cs:eq:linear}--\eqref{cs:eq:inverse}.
\end{proof}

\subsection{The projected nonlinear solution and its mass}

\begin{proposition}\label{cs:prop:projected}
There is $C_0>0$ such that, for every $d\in\mathcal D$ and small
$\eps$, there is a unique correction $\phi=\phi_{\eps,d}\in\mathcal X_{\rm sym}$
in the ball $\|\phi\|_\infty\le C_0\eta^2L$, with
$\int K\phi=0$, for which $u=u^{\mathrm{app}}_{\eps,\delta}+\phi$ solves
\begin{equation}\label{cs:eq:projected}
 \Delta u+\eps^{-2}F(u)
       =8\pi\delta_0+24\pi\delta_a+t_{\eps,d}K.
\end{equation}
It depends continuously on $d$ and satisfies
\begin{equation}\label{cs:eq:phi-bound}
 \|\phi\|_\infty\le C\eta^2L,\qquad |t_{\eps,d}|\le C\eta^2.
\end{equation}
There is a fixed $\gamma>0$ such that $u\le-\gamma$, and
\begin{equation}\label{cs:eq:mass}
 \int_\T\eps^{-2}F(u)\dd=32\pi,\qquad
 \int_{\T\setminus\bigcup_jB_{r_c\delta}(q_j)}
                     \eps^{-2}F(u)\dd=O(\eta^6).
\end{equation}
\end{proposition}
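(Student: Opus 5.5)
We set $u=u^{\mathrm{app}}_{\eps,\delta}+\phi$ and rewrite \eqref{cs:eq:projected} as the projected fixed-point problem
\[
 \mathcal L\phi:=[\Delta+\eps^{-2}F'(u^{\mathrm{app}}_{\eps,\delta})]\phi
 =-E-N(\phi)+tK,\qquad
 N(\phi)=\eps^{-2}\bigl[F(u^{\mathrm{app}}_{\eps,\delta}+\phi)-F(u^{\mathrm{app}}_{\eps,\delta})-F'(u^{\mathrm{app}}_{\eps,\delta})\phi\bigr],
\]
with $\phi\in\mathcal X_{\rm sym}$ and $\int_\T K\phi=0$. Here $E$ is the residual of Lemma~\ref{cs:lem:residual}. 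Since $u^{\mathrm{app}}_{\eps,\delta}$ is even in both coordinates, both $E$ and $N(\phi)$ lie in $\mathcal X_{\rm sym}$.

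Let $\mathcal T$ denote the inverse from Lemma~\ref{cs:lem:inverse}, and set $\mathcal M(\phi)=\mathcal T(-E-N(\phi))$ on the ball $\|\phi\|_\infty\le C_0\eta^2L$.

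\textbf{Contraction.} The sup of $u^{\mathrm{app}}_{\eps,\delta}$ is bounded and $\phi$ is small, so $|F''|$ and $|F'''|$ are controlled by $e^{u}$ on this range. Lemma~\ref{cs:lem:residual} gives $\|\eps^{-2}e^{u^{\mathrm{app}}}\|_*\le C$, hence
\[
 \|N(\phi)\|_*\le C\|\phi\|_\infty^2,\qquad
 \|N(\phi_1)-N(\phi_2)\|_*\le C(\|\phi_1\|_\infty+\|\phi_2\|_\infty)\|\phi_1-\phi_2\|_\infty .
\]
Combining this with $\|E\|_*\le C\eta^2$ and \eqref{cs:eq:inverse}:
\begin{itemize}
 \item $\mathcal M$ maps the ball into itself once $C_0$ is larger than $C$ times the inverse constant, since $CL(\eta^2+C_0^2\eta^4L^2)\le C_0\eta^2L$;
 \item $\mathcal M$ is a contraction with factor $O(\eta^2L^2)\to0$.
\end{itemize}
This gives existence and uniqueness in the ball, and $|t|\le C(\eta^2+\eta^4L^2)\le C\eta^2$.

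\textbf{Continuity in $d$.} The map $d\mapsto u^{\mathrm{app}}_{\eps,\delta}$ is continuous in the relevant norms, as is the data $E$, $K$. The uniform inverse estimate then gives continuity of the linear inverse, and uniform contraction gives continuity of the fixed point. Elliptic regularity applied to the distributional solution gives $u-u_0\in C^\infty$, since $K$ is smooth.

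\textbf{Upper bound and mass.} The bound $u\le-\gamma$ follows from $\sup u^{\mathrm{app}}\to U(0)<0$ and $\|\phi\|_\infty\to0$.

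For the total mass, integrate \eqref{cs:eq:projected} over $\T$. We have $\int\Delta u=0$ by periodicity of $u-u_0$ together with the distributional Laplacian of the singular part, and $\int K=0$. Hence
\[
 \int_\T\eps^{-2}F(u)=8\pi+24\pi=32\pi
\]
exactly, independently of $t$.

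For the exterior tail, $F(u)\le e^u\le e^{u^{\mathrm{app}}}e^{\|\phi\|_\infty}\le Ce^{u^{\mathrm{app}}}$. Then \eqref{cs:eq:app-tail} gives the $O(\eta^6)$ bound off $\bigcup_jB_{r_c\delta}(q_j)$, using $F\ge0$ since $u<0$.

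\textbf{Main obstacle.} The step needing care is the quadratic bound on $N$ in the weighted norm. Near the cores, $\eps^{-2}F''(u^{\mathrm{app}})$ is not small; only its weighted size $\|\cdot\|_*$ is bounded. One must check that
\[
 \eps^{-2}\sup|F''|\le C\eps^{-2}e^{u^{\mathrm{app}}}\le C\omega_\eps
\]
holds uniformly, including on the matching annuli and near $a$, where $e^{u^{\mathrm{app}}}$ vanishes. This is exactly the second estimate in \eqref{cs:eq:residual}, so the step reduces to Lemma~\ref{cs:lem:residual}.

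The loss of $L$ in the inverse is absorbed because $\eta^2L^2\to0$.
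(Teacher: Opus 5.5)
Your proposal is correct and follows essentially the same route as the paper: the contraction on the ball $\|\phi\|_\infty\le C_0\eta^2L$ uses $\|E\|_*\le C\eta^2$, the weighted quadratic bounds on $N$ via $\|\eps^{-2}e^{u^{\mathrm{app}}_{\eps,\delta}}\|_*\le C$, and the uniform projected inverse. Continuity in $d$, $u\le-\gamma$, the exact mass from $\int K=0$, and the tail bound from \eqref{cs:eq:app-tail} are obtained just as in the paper.
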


\begin{proof}
Write $u^{\mathrm{app}}=u^{\mathrm{app}}_{\eps,\delta}$. The equation for $\phi$ has right-hand side
$-E-N(\phi)+tK$, where
\[
 N(\phi)=\eps^{-2}\{F(u^{\mathrm{app}}+\phi)-F(u^{\mathrm{app}})-F'(u^{\mathrm{app}})\phi\}.
\]
For $\|\phi\|_\infty,\|\widetilde\phi\|_\infty\le o(1)$,
\[
 \|N(\phi)\|_*\le C\|\phi\|_\infty^2,\quad
 \|N(\phi)-N(\widetilde\phi)\|_*
 \le C(\|\phi\|_\infty+\|\widetilde\phi\|_\infty)
                                      \|\phi-\widetilde\phi\|_\infty .
\]
For large fixed $C_0$, Lemmas~\ref{cs:lem:residual} and
\ref{cs:lem:inverse} give a self-map contraction with constant
$O(\eta^2L^2)=o(1)$ and the bound for $t$.
Uniqueness and continuity of the coefficients and augmented inverse
give continuity in $d$. Elliptic regularity gives $u-u_0\in C^\infty$,
while Lemma~\ref{cs:lem:residual} gives $u\le-\gamma$.
Integrating \eqref{cs:eq:projected} and using $\int K=0$ gives the exact
mass; \eqref{cs:eq:app-tail} and $\|\phi\|_\infty=o(1)$ give the tail bound.
\end{proof}

\subsection{Actual masses, centers, and quadrupole moments}\label{cs:sec:moments}

In this section $u$ denotes the projected solution.
In the coordinate whose real axis is $e$, define the effective measure
\begin{equation}\label{cs:eq:nu}
 d\nu=\frac1{2\pi}\{\eps^{-2}F(u)-tK\}\dd,\quad
 D_j=B_{r_c\delta}(q_j),\quad \nu_j=\nu|_{D_j},\quad
 \nu_{\rm out}=\nu-\nu_+-\nu_-.
\end{equation}
The outside measure is positive; the signed core measures have
uniformly bounded total variations. Proposition~\ref{cs:prop:projected}
and $\int_{D_j}K_j=0$ give their positive masses:
\begin{equation}\label{cs:eq:actual-mass}
 \nu(\T)=16,\qquad
 \widetilde m:=\nu(D_+)=\nu(D_-)=8+O(\eta^6),\qquad
 \nu_{\rm out}(\T)=O(\eta^6).
\end{equation}

All complex moments below are real because of reflection in the
transverse coordinate. The original-center first moments satisfy
\begin{equation}\label{cs:eq:firstmoment}
 \left|\int_{D_j}(x-q_j)\,d\nu_j(x)\right|\le C\eps\eta^2L.
\end{equation}
Indeed, the radial measure
\(d\nu_j^0=(2\pi)^{-1}\eps^{-2} F(U((x-q_j)/\eps))1_{D_j}\dd\)
has zero nonconstant complex moments, and
\begin{equation}\label{cs:eq:density-difference}
 |F(u(q_j+\eps z))-F(U(z))|
       \le C(1+|z|)^{-8}\eta^2(|z|^2+L).
\end{equation}
The first moment of the projection has size $O(\eps|t|)$.
This proves \eqref{cs:eq:firstmoment}.
The same density bound also gives
\begin{equation}\label{cs:eq:second-absolute-moment}
 \int_{D_j}|x-q_j|^2\,d|\nu_j|(x)\le C\eps^2.
\end{equation}
Indeed, the radial term has a finite second absolute moment,
the density difference contributes
$C\eps^2\eta^2\int_0^{r_c/\eta}r^3(1+r)^{-8}(r^2+L)\,dr
\le C\eps^2\eta^2L$, and the projection contributes
$O(\eps^2|t|)$. The estimate remains valid about $\zeta_j$ below.

Recenter each signed core measure at its center of mass:
\begin{equation}\label{cs:eq:centers}
 \zeta_j=q_j+\frac1{\widetilde m}\int_{D_j}(x-q_j)d\nu_j(x),\qquad
 \zeta_\pm=\pm\widehat\delta e,\quad
 |\widehat\delta-\delta|\le C\eps\eta^2L=o(\eps).
\end{equation}
These expansion centers are well defined since $\widetilde m$ stays positive. Let
\(M_{j,n}=\int_{D_j}(x-\zeta_j)^n\,d\nu_j(x).\)
Then $M_{j,1}=0$ and $M_{-,n}=(-1)^nM_{+,n}$.

\begin{lemma}\label{cs:lem:moments}
Uniformly for $d\in\mathcal D$,
\begin{align}
 M_{j,2}&=-2\mathfrak b\frac{\eps^4}{\delta^2}
      +O\left(\eps^2\eta^4L^2+\eps^4\right),\label{cs:eq:secondmoment}\\
 \sum_{n\ge3}\frac{|M_{j,n}|}{\widehat\delta^{\,n+1}}
                       &\le C\frac{\eta^5L}{\delta}.\label{cs:eq:highermom}
\end{align}
\end{lemma}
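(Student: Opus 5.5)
The second moment is governed by the anisotropic part of the harmonic field that the opposite core and the background exert on each core. I will compute it by projecting the correction $\phi$ onto the second angular mode of the linearized operator $\Delta+F'(U)$ and matching the regular solution $\Phi$ of Lemma~\ref{cs:lem:profile}. The higher moments \eqref{cs:eq:highermom} only need size bounds.

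\textbf{Step 1: the effective potential near $q_+$.} Write
\[
u(q_++\eps z)=U(z)+\psi(z),\qquad \psi=B_+(q_++\eps z)-B_+(q_+)+\phi(q_++\eps z).
\]
By \eqref{cs:eq:Bgradient}, the $B_+$ part is $\eps\delta(\calA e)\cdot z+\tfrac12\eps^2 z^TD^2B_+(q_+)z+O(\eps^3|z|^3)$.

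\emph{Quadratic part of $B_+$.} The dominant quadratic piece is $16\pi\,\eps^2 z^T D^2G(q_+,q_-)z$, coming from the opposite core. Its trace-free second-mode coefficient is of size $\eps^2\cdot\delta^{-2}/(\ldots)$, namely $\eta^2$ up to a constant. Explicitly,
\[
16\pi\cdot\tfrac12\eps^2\,\mathrm{Re}\bigl(\partial_z^2(-\tfrac1{2\pi}\log|z-q_-|)\bigr)\cdot z^2
\;\Longrightarrow\;
\text{coefficient } \tfrac{2\eps^2}{(2\delta)^2}=\tfrac{\eta^2}{2}\ \text{ on } \mathrm{Re}\, z^2 .
\]
The $\calA$-terms and background Hessians contribute $O(\eps^2)$.

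\emph{Equation for $\phi$.} Rescaling \eqref{cs:eq:projected} gives
\[
\Delta\phi+F'(U)\phi=-F'(U)(B_+\text{-part})+O\bigl(\eta^4(1+|z|)^4(1+|z|)^{-8}\bigr)+\eps^2 tK ,
\]
where the quadratic error is bounded by \eqref{cs:eq:phi-bound}.

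\textbf{Step 2: the second mode.} Take the $\cos2\theta$ Fourier coefficient $\phi_2(r)$ of $\phi(q_++\eps\,\cdot)$ on $r\le r_c/\eta$. Set $c_2=\eta^2/2+O(\eps^2)$. The function $g_2=c_2r^2+\phi_2$ is the full second-mode part of $u-U$, and it satisfies the mode-$2$ equation \eqref{cs:eq:phi} with forcing $O(\eta^4L^2(1+r)^{-4})$. The projection $K$ has no mode-$2$ part, since $Z$ is mode $1$. At the matching radius $r\sim r_c/\eta$, $\phi_2$ is $O(\eta^2L)$, and the outer harmonic structure forces $g_2=c_2r^2+O(\ldots)$ with no additional $r^2$ growth.

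\emph{Variation of parameters.} Use $\Phi$ and the singular solution $\sim r^{-2}$ as the basis. This gives $g_2=c_2\Phi+O(\eta^4L^2)$ times weights. In particular the decaying tail of $g_2$ at intermediate radii is $c_2\mathfrak b r^{-2}$.

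\textbf{Step 3: reading off the moment.} By Green's identity for $\Delta g_2=-\eps^2(\text{mode-2 density})$, the $r^{-2}$ coefficient equals $\tfrac14\int r^3(\text{mode-2 density})\,dr$. Equivalently, expanding $F(U+g)-F(U)=F'(U)g_2+\ldots$, one gets
\[
\int\bar z^{\,2}F'(U)g_2
\]
proportional to $c_2\int r^3F'(U)\Phi=4\mathfrak b\,c_2$. Hence
\[
M_{+,2}=\frac{\eps^2}{2\pi}\int(z)^2\text{-moment}=-\eps^2\cdot 4\mathfrak b\,c_2\cdot(\text{normalization})=-2\mathfrak b\,\eps^2\eta^2 .
\]
The sign is fixed by the orientation, since $z^2$ along $e$ is real in the chosen coordinate.

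\emph{Errors.} Recentring from $q_+$ to $\zeta_+$ costs $2\widetilde m(\zeta_+-q_+)\cdot(\text{first moment})+\widetilde m|\zeta-q|^2=O(\eps^2\eta^4L^2)$ by \eqref{cs:eq:firstmoment}. The quadratic nonlinearity contributes $O(\eps^2\eta^4L^2)$, the background contributes $O(\eps^4)$, and the cutoff region $r\ge r_c/\eta$ contributes $O(\eps^2\eta^6)$.

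\textbf{Higher moments.} The bound $|M_{j,n}|\le\int_{D_j}|x-\zeta_j|^n\,d|\nu_j|$ uses the density $\le C\eps^{-2}(1+|x-q_j|/\eps)^{-8}$ on $D_j$, with $|x-\zeta_j|\le 2r_c\delta$. For $n\ge 6$ this gives $|M_{j,n}|\le C\eps^6(r_c\delta)^{n-6}L$, and the ratio $(2r_c)^n$ sums geometrically. For $n=3,4,5$ the moments converge absolutely, giving $|M_{j,n}|\le C\eps^n$. The odd moments $3$ and $5$ need only this size, since $\eps^3/\delta^4=\eta^3\eps^{0}\ldots$. Checking the powers, $\eps^3/\delta^4=\eta^3\delta^{-1}\cdot\eps^0$, which is too large.

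\emph{Sharpening $n=3$.} Since $\eps^3/\delta^4=\eta^3/\delta$ exceeds the target $\eta^5L/\delta$, I must use that the radial profile has zero third moment. The deviation \eqref{cs:eq:density-difference} then contributes
\[
\eps^3\eta^2\int r^4(1+r)^{-8}(r^2+L)\,dr\le C\eps^3\eta^2L ,
\]
and similarly for $n=4,5$ (giving $\eps^4$ and $\eps^5$ directly). The recentring shift adds $O(\eps\eta^2L\cdot\eps^2)$ to the third moment.

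\emph{Main obstacle.} The main difficulty is Step 2: justifying that $g_2$ has exactly the growth coefficient $c_2$ with error $O(\eta^4L^2)$, uniformly as $\delta\to0$. For this I would use the matching of $\phi$ at radius $\sim\delta$ together with the sup bound on $\phi$.
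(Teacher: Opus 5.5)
The architecture of your proof matches the paper's: the mode-two coefficient of $u(q_++\eps z)-U(z)$, matched at $R_\eta=r_c/\eta$ to the incident harmonic field, compared with $\Phi$, and read off through $\int_0^\infty r^3F'(U)\Phi\,dr=4\mathfrak b$. However, the computation that fixes the sign and size of $M_{j,2}$ is wrong. You reach the stated formula only because two errors cancel.

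\emph{The incident coefficient.} In Step 1 you treat $u_0$ as a smooth background whose Hessian contributes $O(\eps^2)$ after rescaling. It does not: $u_0=4\log|x|+O(1)$ near the strength-two vortex, which sits at distance $\delta$ from $q_+$. Use the complex coordinate $w=x-q_+$ directed along $e$.
\begin{itemize}
\item The vortex term $4\log|\delta+w|$ contributes $-2\delta^{-2}$ to the coefficient of $\operatorname{Re}w^2$.
\item The opposite core, $-8\log|2\delta+w|$, contributes $+\delta^{-2}$. In the rescaled variable that is $+\eta^2$, not $\eta^2/2$.
\end{itemize}
So the incident coefficient is $a_2^0=-\delta^{-2}+O(1)$, that is, $c_2=-\eta^2+O(\eps^2)$. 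The vortex term dominates and fixes the sign; this is the partial cancellation of vortex and core fields that drives the construction.

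\emph{The reading-off step.} Step 3 must use the exact identity (the projection has no second complex moment about $q_+$)
\[
 \int_{D_+}(x-q_+)^2\,d\nu_+=\frac{\eps^2}{2}\int_0^{R_\eta}r^3\{F'(U)\upsilon_2+n_2\}\,dr .
\]
With $\upsilon_2=c_2\Phi+p_\eta$, this gives $M_{+,2}\approx2\mathfrak b\eps^2c_2=-2\mathfrak b\eps^2\eta^2$. Your relation $M_{+,2}=-4\mathfrak b\eps^2c_2\cdot(\text{normalization})$ has the wrong sign and factor. Applied correctly to your $c_2=\eta^2/2$, the identity would give $+\mathfrak b\eps^2\eta^2$. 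The sign cannot be ``fixed by orientation'': it is exactly what gives $\lambda d-\mathfrak b/(2d^5)$ a positive zero.

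\emph{Your ``main obstacle'' in Step 2.} The paper handles it as follows. $B_+$ is harmonic on $\overline{D_+}$ and the cutoff equals one on $\partial D_+$, so $\upsilon_2(R_\eta)=\eps^2a_2^0R_\eta^2+O(\eta^2L)$ exactly. Writing $\upsilon_2=A_\eta\Phi+p_\eta$ with $p_\eta(R_\eta)=0$ then determines $A_\eta$.

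Two smaller points.

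First, your mode-two forcing bound $O(\eta^4L^2(1+r)^{-4})$ loses a logarithm: $\int_1^{R_\eta}r^{-1}\,dr\sim L$, which yields $\eps^2\eta^4L^3$ rather than the stated $\eps^2\eta^4L^2$. Use instead $|n_2|\le C|F''(U)|\upsilon^2\le C\eta^4(r^{-4}+L^2r^{-8})$ for $r\ge1$, and $O(\eta^4L^2r^2)$ for $r\le1$ from interior $C^2$ bounds.

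Second, for \eqref{cs:eq:highermom}, if you bound $|M_{j,4}|$ by $C\eps^4$ ``directly'', as your parenthetical suggests, this fails: $\eps^4/\delta^5=\eta^4/\delta\gg\eta^5L/\delta$. The radial cancellation is needed for every $n\ge3$. The paper does this in one stroke:
\begin{itemize}
\item $\int_{D_j}|x-q_j|^3\,d|\nu_j-\nu_j^0|\le C\eps^3\eta^2L$;
\item about $\zeta_j$, the $n$th moment of $\nu_j^0$ is its mass times $(q_j-\zeta_j)^n$;
\item all supports lie within $2r_c\delta$ of $\zeta_j$.
\end{itemize}
A single geometric series then gives $C\widehat\delta^{-4}\{\eps^3\eta^2L+|q_j-\zeta_j|^3\}\le C\eta^5L/\delta$.
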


\begin{proof}
Write $\upsilon(z)=u(q_++\eps z)-U(z)$, $R_\eta=r_c/\eta$, and
\[
 \upsilon_2(r)=\frac1\pi\int_0^{2\pi}\upsilon(r,\theta)\cos2\theta\,d\theta.
\]
On $r\le R_\eta$, \eqref{cs:eq:Bbound} and \eqref{cs:eq:phi-bound} give
\(|\upsilon|\le C\eta^2(r^2+L).\)
Let $n_2$ be the mode-two coefficient of
$F(U+\upsilon)-F(U)-F'(U)\upsilon$. The projected forcing has only angular mode one
at this core, so
\begin{equation}\label{cs:eq:mode2}
 \upsilon_2''+r^{-1}\upsilon_2'-4r^{-2}\upsilon_2+F'(U)\upsilon_2=-n_2 .
\end{equation}
Taylor's theorem gives
\begin{equation}\label{cs:eq:n2}
 |n_2(r)|\le C\eta^4(r^{-4}+L^2r^{-8})\quad(r\ge1).
\end{equation}
For $r\le1$ the bound is $C\eta^4L^2r^2$. To justify the factor
$r^2$, interior elliptic estimates in the projected equation give
$\|\upsilon\|_{C^2(B_2)}\le C\eta^2L$; the nonlinear Taylor remainder has
$C^2$ norm $O(\eta^4L^2)$, and its second angular mode vanishes to
order two at the origin.

The harmonic incident field $B_+$ has quadratic coefficient
\(a_2^{\,0}=-\delta^{-2}+O(1).\)
Since the cutoff in \eqref{cs:eq:app} is one on $\partial D_+$,
\(\upsilon_2(R_\eta)=\eps^2a_2^{\,0}R_\eta^2+O(\eta^2L).\)
Write $\upsilon_2=A_\eta\Phi+p_\eta$, where $p_\eta$ is regular at zero and
vanishes at $R_\eta$. It follows that
\begin{equation}\label{cs:eq:Aeta}
 A_\eta=-\eta^2+O(\eta^4L+\eta^2\delta^2).
\end{equation}
Here $\Phi(R_\eta)/R_\eta^2=1+O(\eta^4)$.

For completeness, variation of parameters for \eqref{cs:eq:mode2} can
be written using the positive solution $\Phi$:
\[
 p_\eta(r)=
 \Phi(r)\int_r^{R_\eta}\frac1{s\Phi(s)^2}
                  \int_0^s t\Phi(t)n_2(t)\,dt\,ds .
\]
The estimates $\Phi(r)\asymp r^2$ at both endpoints and
\eqref{cs:eq:n2} imply
\[
 |p_\eta(r)|\le
 \begin{cases}
 C\eta^4L^2,&0\le r\le1,\\
 C\eta^4r^{-2}\{L^2+\log(2+r)\},&1\le r\le R_\eta.
 \end{cases}
\]
Consequently
\[
 \int_0^{R_\eta}r^3
       \bigl(|F'(U)p_\eta|+|n_2|\bigr)\,dr\le C\eta^4L^2.
\]
Using \eqref{cs:eq:beta}, \eqref{cs:eq:Aeta}, and
$\int_{R_\eta}^\infty r^3F'(U)\Phi\,dr=O(\eta^2)$ gives
\[
 \int_{D_+}(x-q_+)^2\,d\nu_+(x)
 =\frac{\eps^2}{2}\int_0^{R_\eta}r^3
                      \{F'(U)\upsilon_2+n_2\}\,dr
 =-2\mathfrak b\eps^2\eta^2
        +O\bigl(\eps^2\eta^4L^2+\eps^2\eta^2\delta^2\bigr).
\]
The projection has zero second complex moment about $q_+$.
Recentering changes this expression by
$O(\eps^2\eta^4L^2)$, by \eqref{cs:eq:firstmoment}. This proves
\eqref{cs:eq:secondmoment}; reflection treats the other core.

For the higher moments, \eqref{cs:eq:density-difference} and
$|t|\le C\eta^2$ imply
\begin{equation}\label{cs:eq:thirdabsolute}
 \int_{D_j}|x-q_j|^3\,d|\nu_j-\nu_j^0|
                       \le C\eps^3\eta^2L.
\end{equation}
The integral in the scaled variable is finite since
$\int_1^\infty r^6(1+r)^{-8}dr<\infty$.
The radial measure $\nu_j^0$ has no nonconstant complex moments
about $q_j$; about $\zeta_j$ its $n$th moment is its mass times
$(q_j-\zeta_j)^n$. All supports lie in a disk of radius
$2r_c\delta$ about $\zeta_j$. Summing the geometric series beginning
at order three, using \eqref{cs:eq:thirdabsolute} and
\eqref{cs:eq:centers}, therefore gives
\[
 \sum_{n\ge3}\frac{|M_{j,n}|}{\widehat\delta^{\,n+1}}
 \le C\widehat\delta^{-4}
       \{\eps^3\eta^2L+|q_j-\zeta_j|^3\}
 \le C\frac{\eta^5L}{\delta}.
\]
\end{proof}

\subsection{The translation equation with a controlled error}

Green representation for \eqref{cs:eq:projected} is exact:
\begin{equation}\label{cs:eq:representation}
 u(x)=u_0(x)+2\pi\int_\T G(x,y)\,d\nu(y)+\overline c .
\end{equation}
Consider the positive core and the circle
$|x-\zeta_+|=4r_c\delta$. The core supports lie inside radius
$2r_c\delta$ about their respective centers.
On this circle the outside-core density and its potential obey
\begin{equation}\label{cs:eq:neck-potential}
 \eps^{-2}|\mathcal P(u)|\le C\delta^{-2}\eta^6,\qquad
 \left|\nabla\left(2\pi\int G(x,y)d\nu_{\rm out}(y)\right)\right|
                         \le C\eta^6/\delta.
\end{equation}
For the second estimate, integrate the locally integrable logarithmic
gradient against the cluster density $C\delta^{-2}\eta^6$ and the outer
bound $C\eps^6\delta^4|y|^{-12}$ from Lemma~\ref{cs:lem:residual}.
Each contributes $O(\eta^6/\delta)$; the distant part is smaller.

After subtracting this potential, the remaining field has constant
Laplacian $4\pi(\widetilde m-8)$ on the matching annulus.
In a complex coordinate $w$ centered at $\zeta_+$ and directed along
$e$, subtracting $\pi(\widetilde m-8)|w|^2$ therefore gives a harmonic
field with expansion
\begin{equation}\label{cs:eq:Laurent}
 H(w)=-\widetilde m\log|w|
             +\operatorname{Re}\sum_{n\ge1}(a_nw^n+b_nw^{-n}),
 \qquad b_n=M_{+,n}/n .
\end{equation}
An additive constant has been suppressed.
The quadratic correction accounts for the regular Green parts when
only $\nu_++\nu_-$ is retained. Its gradient on the matching circle is
$O(\eta^6\delta)$ and is absorbed in \eqref{cs:eq:neck-potential}.
Thus $|\nabla u-\nabla H|\le C\eta^6/\delta$ there.
All coefficients are real, and $b_1=0$.

We compute the incident field term by term. Put
$D=\widehat\delta$ and $\Delta m=\widetilde m-8=O(\eta^6)$.
For $|w|<D$, the singular incident part, up to a constant, is
\[
 \operatorname{Re}\left\{\begin{aligned}
 &4\log(1+w/D)-8\log(1+w/(2D))\\
 &\quad-\Delta m\log(1+w/(2D))+\frac{b_2}{(2D+w)^2}
       +\sum_{n\ge3}\frac{M_{-,n}}{n(2D+w)^n}
 \end{aligned}\right\}.
\]
The signs of the moments here follow from expanding
$-\int\log|2D+w-(x-\zeta_-)|\,d\nu_-(x)$.
In particular $M_{-,2}=M_{+,2}=2b_2$. The relevant coefficients are
\[
\begin{array}{c|ccc}
 \text{term} & [w] & [w^2] & [w^3]\\ \hline
 4\log(1+w/D) & 4/D & -2/D^2 & 4/(3D^3)\\
 -8\log(1+w/(2D)) & -4/D & 1/D^2 & -1/(3D^3)\\
 -\Delta m\log(1+w/(2D))
      & -\Delta m/(2D)&\Delta m/(8D^2)&-\Delta m/(24D^3)\\
 b_2/(2D+w)^2 &-b_2/(4D^3)&3b_2/(16D^4)&-b_2/(8D^5)
\end{array}
\]
Thus the two leading logarithms cancel in degree one and have
cubic coefficient $D^{-3}$.

Replacing both cores by masses $8$ at $\pm De$ in the regular Green
field gives linear coefficient $\lambda D+O(D^3)$ by
\eqref{cs:eq:Bgradient} and cubic coefficient $O(1)$.
The actual-mass correction is $O(\eta^6)$ in each fixed derivative.
The source derivatives are uniformly bounded; hence zero centered
dipoles and \eqref{cs:eq:second-absolute-moment} give a Taylor error
$O(\eps^2)=O(\delta^3)$, using $\eps^2=\delta^3/d^3$.
Also, $\Delta R=1$ makes $\nabla R$ componentwise harmonic, so radial
measures act exactly as point masses.

For degree one the moments of order $n\ge3$ contribute at most
$C\sum_{n\ge3}|M_{-,n}|/D^{n+1}
\le C\eta^5L/\delta$.
For degree three their coefficients contain
$\binom{n+2}{3}/(n2^{n+3})$, which is bounded uniformly in $n$.
Consequently their total cubic contribution is
$O(\eta^5L/\delta^3)$.
The quadrupole has $|b_2|/D^2=O(\eta^4)$ by
\eqref{cs:eq:secondmoment}, while the mass correction contributes
$O(\eta^6/\delta)$ and $O(\eta^6/\delta^3)$ to the linear and
cubic coefficients, respectively. Combining these separate terms,
\begin{align}
 a_1={}&\lambda D-\frac{b_2}{4D^3}
       +O\left(\delta^3+\frac{\eta^6}{\delta}
                              +\frac{\eta^5L}{\delta}\right),\label{cs:eq:a1}\\
 a_3={}&D^{-3}\{1+O(\eta^4+\eta^5L+\eta^6+\delta^3)\},\qquad
 |a_n|\le \frac{C}{nD^n}\quad(n\ge1).\label{cs:eq:an}
\end{align}
For the last bound, the vortex is at distance $D$ and the opposite
core at distances at least $(2-3r_c)D>D$. Their logarithmic expansions
and bounded total variations give $C/(nD^n)$ uniformly in $n$.
The smooth field is analytic on a fixed-radius disk, so its
geometric coefficient bounds satisfy the same estimate for small $\eps$.

\begin{lemma}\label{cs:lem:reduced}
With $\mathcal J$ as in Section~\ref{cs:sec:inverse}, uniformly for
$d\in\mathcal D$ as $\eps\to0$,
\begin{equation}\label{cs:eq:reduced}
 \frac{t_{\eps,d}}{\eps^{5/3}}
   =-\frac{16\pi}{\mathcal J}
        \left(\lambda d-\frac{\mathfrak b}{2d^5}
                            +O(\eps^{1/3}L)\right).
\end{equation}
\end{lemma}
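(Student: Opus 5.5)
We plan to obtain the reduced equation from the translation (Pohozaev) identity on the positive core, tested in the direction $e$ over the disk $B_{4r_c\delta}(\zeta_+)$. We multiply \eqref{cs:eq:projected} by $\partial_e u$ and integrate over this disk. The left side becomes a boundary integral on the matching circle $|w|=4r_c\delta$:
\[
 \oint\Bigl[\partial_\nu u\,\partial_e u-\tfrac12|\nabla u|^2\nu_e+\eps^{-2}\mathcal P(u)\nu_e\Bigr]dS .
\]
There is no interior term, because the only singular source of \eqref{cs:eq:projected} lies outside the disk and the coefficients are constant. The right side is $t\int K_+\partial_eu\,dx$. Since $u(q_++\eps z)=U(z)+\upsilon$, with $\|\upsilon\|_{C^1}$ on the support of $K_+$ bounded by $C\eta^2L$ via interior estimates, this integral equals $t\eps^{-1}(\mathcal J+O(\eta^2L))$.

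On the circle we replace $\nabla u$ by $\nabla H$. By \eqref{cs:eq:neck-potential}, the error in $\nabla u$ is $O(\eta^6/\delta)$, and the $\mathcal P$ term contributes $O(\delta^{-1}\eta^6)$. Since $|\nabla H|=O(1/\delta)$ there, the total boundary error is $O(\eta^6/\delta)$. For the harmonic field \eqref{cs:eq:Laurent}, the circle integral is the standard residue computation. The quadratic boundary form $\oint[\partial_\nu H\partial_eH-\tfrac12|\nabla H|^2\nu_e]$ is independent of the radius and picks out only cross terms between $w^{n}$ and $w^{-n}$ modes of matching degree, together with the logarithm. Specifically:
\begin{itemize}
 \item the $-\widetilde m\log|w|$ term paired with $a_1$ gives $-2\pi\widetilde m\,a_1$ (up to sign convention);
 \item $b_n$ paired with $a_{n+1}$ gives a contribution of size $\propto n\,a_{n+1}b_n$;
 \item $b_1=0$ removes the $a_2$ term.
\end{itemize}
The leading contributions are therefore $-2\pi\widetilde m\,a_1$ and a term $c\,a_3b_2$. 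Higher pairings are controlled by \eqref{cs:eq:an} and \eqref{cs:eq:highermom}, giving $O(\eta^5L/\delta)$.

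We then insert \eqref{cs:eq:a1} and \eqref{cs:eq:an}, with $\widetilde m=8+O(\eta^6)$ and $b_2=M_{+,2}/2=-\mathfrak b\eps^4/\delta^2+O(\eps^2\eta^4L^2+\eps^4)$ from \eqref{cs:eq:secondmoment}. The flux becomes
\[
 -16\pi\lambda D+4\pi\frac{b_2}{D^3}-c\,\pi\frac{b_2}{D^3}
\]
plus errors. The two $b_2/D^3$ contributions, one from the opposite core's quadrupole field inside $a_1$ and one from this core's own quadrupole against the vortex-plus-core cubic field $a_3=D^{-3}$, combine to a multiple of $\mathfrak b\eps^4/\delta^5$. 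Matching against the stated constant, we expect the combination $-16\pi(\lambda\delta-\mathfrak b\eps^4/(2\delta^5))$.

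Substituting $\delta=d\eps^{2/3}$ makes both terms of order $\eps^{2/3}$. Dividing the identity $t\eps^{-1}\mathcal J=\text{flux}$ by $\eps^{2/3}$ then yields \eqref{cs:eq:reduced}. The error terms relative to $\eps^{2/3}$ are:
\begin{itemize}
 \item $\delta^3/\delta=\delta^2$;
 \item $(\eta^6/\delta)/\delta=\eta^6/\delta^2$;
 \item $(\eta^5L/\delta)/\delta\asymp\eta L=\eps^{1/3}L/d$;
 \item from the second moment error, $\eps^2\eta^4L^2/\delta^3\cdot\delta^{-1}$, which is $O(\eta^2L^2)$;
 \item from replacing $D$ by $\delta$, $O(\eps\eta^2L/\delta)$.
\end{itemize}
All are $O(\eps^{1/3}L)$.

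The main obstacle is the exact bookkeeping of the constant in front of $b_2/D^3$. Two separate quadrupole couplings must be tracked with correct signs (the table's $-b_2/(4D^3)$ in $a_1$ and the $a_3b_2$ pairing in the boundary form), since their sum produces $\mathfrak b/(2d^5)$. We would verify the residue formula for the pairing by computing $\oint\operatorname{Re}(w^{n+1})'\,\overline{\operatorname{Re}(w^{-n})'}$ explicitly in complex form, using $\partial_{\bar w}$ notation: the translation flux equals $\operatorname{Re}\,\tfrac{i}{2}\oint(\partial_wH)^2dw$ up to normalization. With $\partial_wH=-\widetilde m/(2w)+\tfrac12\sum(na_nw^{n-1}-nb_nw^{-n-1})$, the residue is $-\tfrac12\widetilde m a_1-\tfrac12\sum_n n(n+1)a_{n+1}b_n$ times constants. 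This makes both the $n=2$ pairing ($6a_3b_2$) and the tail bound transparent, the tail using $|a_{n+1}|\le C/((n+1)D^{n+1})$ and \eqref{cs:eq:highermom}.
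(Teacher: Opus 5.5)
Your proposal is correct and follows essentially the same route as the paper. The stress-tensor (translation) flux through $\partial B_{4r_c\delta}(\zeta_+)$ equals $t\eps^{-1}(\mathcal J+O(\eta^2L))$, the pressure term and the replacement $\nabla u\to\nabla H$ cost $O(\eta^6/\delta)$, and the residue $\Res_{w=0}(H_w)^2=-\tfrac12\bigl(\widetilde m a_1+\sum_{n\ge1}n(n+1)b_na_{n+1}\bigr)$ gives the combination $a_1+\tfrac{6}{\widetilde m}b_2a_3$, whose coefficient on $b_2/D^3$ is $-\tfrac14+\tfrac68=\tfrac12$. Your stated flux normalization ($\operatorname{Re}\tfrac{i}{2}\oint(\partial_wH)^2\,dw$) is off by a constant factor; the correct one is $4\pi\operatorname{Re}\Res_{w=0}(H_w)^2$, which gives $c=12$ in your display and the flux $-16\pi(\lambda D+b_2/(2D^3))$. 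This causes no gap, because your $-2\pi\widetilde m a_1$ term already fixes that normalization consistently.
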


\begin{proof}
Use the stress tensor
\[
 \mathsf T(u)=\nabla u\otimes\nabla u-\tfrac12|\nabla u|^2I_2
                              +\eps^{-2}\mathcal P(u)I_2 .
\]
On the positive-core disk,
$\operatorname{div}\mathsf T(u)=tK_+\nabla u$.
With $\boldsymbol n$ the outward unit normal, its flux in the $e$
direction is therefore
\begin{equation}\label{cs:eq:stressrhs}
 \int_{\partial B_{4r_c\delta}(\zeta_+)}\mathsf T(u)\boldsymbol n\cdot e\,ds
       =\frac{t}{\eps}\{\mathcal J+O(\eta^2L)\}.
\end{equation}
Indeed, on the fixed support of the scaled projection,
\(\|u(q_++\eps\,\cdot)-U\|_{C^1(B_{2R_0})}\le C\eta^2L.\)
This follows from \eqref{cs:eq:Bbound}, \eqref{cs:eq:phi-bound},
the bound on $t$, and interior elliptic estimates in the projected
equation. In particular the factor multiplying $t/\eps$ stays
bounded away from zero, uniformly in $d$.

The pressure term on the left is $O(\eta^6/\delta)$ by
\eqref{cs:eq:neck-potential}. Replacing the gradient by that of $H$
has the same error, since $|\nabla H|\le C/\delta$ on the circle.
Write $w=\xi+i\upsilon$ with the $\xi$-axis directed along $e$,
and use the convention $H_w=(H_\xi-iH_\upsilon)/2$.
For the harmonic expansion \eqref{cs:eq:Laurent}, direct Laurent
multiplication gives
\[
 \Res_{w=0}(H_w)^2
   =-\frac12\left(\widetilde m a_1+
                    \sum_{n\ge1}n(n+1)b_na_{n+1}\right).
\]
The stress flux in the $e$ direction is $4\pi$ times this real residue.
The $n=1$ term vanishes. Moreover, \eqref{cs:eq:an} and
\eqref{cs:eq:highermom} give
\(\sum_{n\ge3}n(n+1)|b_na_{n+1}| \le C\eta^5L/\delta=o(\delta).\)
We used $\delta\asymp\eta^2$.
Equations \eqref{cs:eq:actual-mass}, \eqref{cs:eq:secondmoment},
\eqref{cs:eq:a1}, and \eqref{cs:eq:an} now imply
\begin{align*}
 a_1+\frac6{\widetilde m} b_2a_3
  &=\lambda\widehat\delta
       +\left(-\frac14+\frac68\right)\frac{b_2}{\widehat\delta^3}
       +o(\delta)\\
  &=\lambda\widehat\delta
       -\frac{\mathfrak b}{2}\frac{\eps^4}{\widehat\delta^5}+o(\delta).
\end{align*}
We detail uniformity of the discarded terms. The identities
$\delta=d^3\eta^2$ and
$|D/\delta-1|\le C\eta^3L$ hold uniformly on $\mathcal D$.
The force errors relative to the leading size $\delta$ are
\[
\begin{array}{l|c|c}
 \text{source} & \text{force error} & \text{error divided by }\delta\\ \hline
 \text{smooth field}&O(\delta^3)&O(\eps^{4/3})\\
 \text{mass, outer potential, pressure}
     &O(\eta^6/\delta)&O(\eps^{2/3})\\
 \text{moments of order }n\ge3
     &O(\eta^5L/\delta)&O(\eps^{1/3}L)\\
 \text{second-moment remainder}
     &O((\eps^2\eta^4L^2+\eps^4)/\delta^3)
     &O(\eps^{2/3}L^2+\eps^{4/3})\\
 \text{change from }D\text{ to }\delta
     &O(\delta\eta^3L)&O(\eps L)
\end{array}
\]
The second-moment row uses the quantitative remainder in
\eqref{cs:eq:secondmoment}. The relative error in $a_3$ contributes
$O(\eta^4+\eta^5L+\eta^6+\delta^3)$, because
$b_2/D^3=O(\delta)$. Replacing $\widetilde m$ by $8$ contributes
$O(\eta^6)$ relatively, and division by the coefficient in
\eqref{cs:eq:stressrhs} contributes $O(\eta^2L)$ relatively.
Every entry tends to zero and is bounded by $C\eps^{1/3}L$ for
small $\eps$, uniformly in $d\in\mathcal D$.
For recentering, the derivative of
$\lambda D-\mathfrak b\eps^4/(2D^5)$ is bounded when
$D\asymp\delta=d\eps^{2/3}$, which justifies the indicated error.

Combining the stress identity and these estimates therefore gives
\[
 \frac{t}{\eps}
   =-\frac{16\pi}{\mathcal J}
        \left(\lambda\delta-\frac{\mathfrak b\eps^4}{2\delta^5}
                         +O(\delta\eps^{1/3}L)\right).
\]
Substitution of $\delta=d\eps^{2/3}$ proves \eqref{cs:eq:reduced}.
Only uniform convergence and continuity in $d$ are used to choose
the scale; no derivative estimate for the remainder is needed.
\end{proof}

\subsection{The positive periodic coefficient and the choice of scale}

For the square torus, the quarter-turn symmetry and $\Delta R=1$
give $D^2R(0)=\frac12I$. Summing the Green Fourier series in the
second frequency gives, for $y=0$ away from the source,
\[
 G(x,0)=C_0+\sum_{n\ge1}\frac{\coth(\pi n)}{2\pi n}\cos(2\pi nx).
\]
The part with $\coth(\pi n)$ replaced by $1$ is Abel summed; the
difference has exponentially convergent derivatives.
At $a=(1/2,0)$ this yields
\begin{equation}\label{cs:eq:Gxx}
 G_{xx}(a,0)=\frac\pi2
       -4\pi\sum_{n\ge1}\frac{(-1)^n n}{e^{2\pi n}-1}>\frac\pi2>\frac12.
\end{equation}
The alternating-series test gives the strict inequality.
Reflection and the equation give $G_{xy}(a,0)=0$ and
$G_{yy}(a,0)=1-G_{xx}(a,0)$. Thus
$\calA=\operatorname{diag}(-\lambda,\lambda)$ with $\lambda>0$,
establishing the signs in \eqref{cs:eq:lambda} and \eqref{cs:eq:a1}
for the vertical direction.

\begin{proof}[Proof of Theorem~\ref{cs:thm:existence}]
The function $\mathscr G(d)=\lambda d-\mathfrak b/(2d^5)$ has unique
positive zero $d_*$, with $\mathscr G'(d_*)=6\lambda>0$.
Fix $d_-<d_*<d_+$ in $\mathcal D$. By
Proposition~\ref{cs:prop:projected} and Lemma~\ref{cs:lem:reduced},
$t_{\eps,d}$ is continuous and has opposite endpoint signs for every
small $\eps$. The intermediate value theorem gives
$d_\eps\in(d_-,d_+)$ with $t_{\eps,d_\eps}=0$, hence an exact solution
of \eqref{cs:eq:target}. Uniform convergence in \eqref{cs:eq:reduced}
and uniqueness of the zero imply $d_\eps\to d_*$.

Equations \eqref{cs:eq:inner}, \eqref{cs:eq:Bbound},
\eqref{cs:eq:phi-bound}, and elliptic regularity give inner convergence.
Since $D^2U(0)=-\tfrac12 F(U(0))I<0$, local strict concavity gives
maxima at $q_\pm+o(\eps)$, which reflection places on the vertical axis
in an opposite pair. Their positive distance from zero can replace
$\delta$ in \eqref{cs:eq:scale} and \eqref{cs:eq:profiles}.

The nonnegative density has total mass $32\pi$, exhausted by the
two inner limits of mass $16\pi$. This proves \eqref{cs:eq:measure}
and the no-neck assertion; \eqref{cs:eq:mass} gives the cluster-scale bound.
Away from zero the approximation is the outer field and
$C_{\eps,\delta}-2\log\eps
=6\log\eps+4\log\delta+O(1)\to-\infty$.
This proves the asserted behavior of $w_\eps$.
At a core, $u_0(q_\pm)=4\log\delta+O(1)$, so
\[
 w_\eps(q_{\eps,+})+6\log|q_{\eps,+}|
       =2\log(\delta/\eps)+O(1)\longrightarrow+\infty .
\]
The uniform negative bound and regularity were proved in
Proposition~\ref{cs:prop:projected}. This completes the construction.
\end{proof}

\section{Construction of a non-simple mean-field cluster}\label{sec:mf-construction}

We prove Theorem~\ref{mf:thm:main} on the fixed disk $B_1\subset\C$,
with a strength-one vortex at the origin and the boundary traces
in \eqref{mf:eq:boundary}. The seed uses the branched map $z\mapsto z^2$,
as in \cite[Section~2]{DAprileWeiZhangCVPDE}; the perturbation solves
the full Chern--Simons equation \eqref{mf:eq:CS}.

\subsection{The explicit Liouville family}

For $s>0$, set
\begin{equation}\label{mf:eq:seed}
v_s(z)=\log\frac{32s^4}{\bigl(s^4+|z^2-s|^2\bigr)^2},
\qquad H_s(z)=|z|^2e^{v_s(z)}.
\end{equation}
For $f_s(z)=(z^2-s)/s^2$, the identity
\(\Delta\log(1+|f|^2)=\frac{4|f'|^2}{(1+|f|^2)^2}\)
for holomorphic $f$ gives
\begin{equation}\label{mf:eq:seedPDE}
-\Delta v_s=H_s,
\qquad H_s=\frac{8|f_s'|^2}{(1+|f_s|^2)^2}.
\end{equation}
Since $f_s$ has degree two, the change-of-variables formula with
multiplicity yields
\begin{equation}\label{mf:eq:totalmass}
\int_{\R^2}H_s\dd
=2\int_{\R^2}\frac8{(1+|w|^2)^2}\,dw=16\pi.
\end{equation}

The maxima of $v_s$ are $q_\pm(s)=\pm\sqrt{s}$. Define
\begin{equation}\label{mf:eq:scales}
\delta_s=\sqrt{s},\qquad \ell_s=\frac{s^{3/2}}2.
\end{equation}
Then $\ell_s/\delta_s=s/2\to0$. Direct substitution gives the exact
pointwise identity
\begin{equation}\label{mf:eq:exactcore}
\ell_s^2H_s(q_\pm+\ell_s\xi)
=\frac{8|\pm1+(s/2)\xi|^2}
{\bigl(1+|\pm\xi+(s/4)\xi^2|^2\bigr)^2}.
\end{equation}
On each fixed $B_R$, this converges in $C^2$ to
$8/(1+|\xi|^2)^2$ with error $O_R(s)$.

For a sufficiently small fixed $c>0$, factorization of $z^2-s$
implies
\begin{equation}\label{mf:eq:pointwise}
H_s(z)\asymp
\ell_s^{-2}\left(1+\frac{|z-q_\pm|^2}{\ell_s^2}\right)^{-2}
\qquad (|z-q_\pm|\le c\sqrt{s}),
\end{equation}
where the comparison constants are independent of small $s$.
We shall also use
\begin{equation}\label{mf:eq:supH}
\|H_s\|_{L^\infty(B_1)}\le64s^{-3},\qquad 0<s<1.
\end{equation}
Indeed, for $|z|^2\le2s$ this follows from \eqref{mf:eq:seed} and the
lower bound $s^4$ for $s^4+|z^2-s|^2$. For $|z|^2>2s$, the bound
$|z^2-s|\ge |z|^2/2$ gives $H_s\le512s^4/|z|^6\le64s$.

\subsection{Selection of nondegenerate seeds}

Fix a H\"older exponent $\sigma_H\in(0,1)$ and let
\[
X=\{\phi\in C^{2,\sigma_H}(\overline{B_1}):
\phi|_{\partial B_1}=0\},\qquad
Y=C^{0,\sigma_H}(\overline{B_1}).
\]
Define
\begin{equation}\label{mf:eq:linear}
L_s=-\Delta-H_s:X\longrightarrow Y.
\end{equation}

\begin{lemma}\label{mf:lem:generic}
There is a discrete subset $\mathcal E_D\subset(0,\infty)$ such that $L_s$ is
invertible for every $s\in(0,\infty)\setminus \mathcal E_D$.
\end{lemma}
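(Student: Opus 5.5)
The plan is to use analytic Fredholm theory for the family $s\mapsto L_s$. First, $L_s$ is Fredholm of index zero for every $s>0$. Indeed, $-\Delta:X\to Y$ is an isomorphism by Dirichlet Schauder theory. Multiplication by $H_s$ is bounded from $X$ into $C^{1,\sigma_H}(\overline{B_1})$, and this space embeds compactly into $Y$. So $L_s=(-\Delta)\bigl(I-(-\Delta)^{-1}H_s\bigr)$ is the isomorphism $-\Delta$ composed with $I-T_s$, where $T_s=(-\Delta)^{-1}H_s$ is compact on $X$.

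Next, I will check analyticity in $s$. The weight
\[
H_s(z)=\frac{32 s^4|z|^2}{\bigl(s^4+|z^2-s|^2\bigr)^2}
\]
is real analytic in $s\in(0,\infty)$. Its denominator is at least $s^8>0$ uniformly on $\overline{B_1}$, and this lower bound is locally uniform in $s$. Hence $H_s$ extends holomorphically to a complex neighborhood of each $s_0>0$, with values in $C^{1}(\overline{B_1})\subset Y$. This is seen by writing the denominator as a polynomial in $s$ with coefficients smooth in $z$ and using its nonvanishing on a small complex disk. Thus $s\mapsto T_s$ is a holomorphic family of compact operators on $X$, defined on a connected complex neighborhood $\Omega$ of $(0,\infty)$.

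By the analytic Fredholm alternative, either $I-T_s$ is noninvertible for every $s\in\Omega$, or its inverse exists except on a discrete subset of $\Omega$. The discrete set $\mathcal E_D$ is then that exceptional set intersected with $(0,\infty)$.

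The main obstacle is to exhibit one $s_0$ at which $L_{s_0}$ is invertible. I would take $s_0$ large. By \eqref{mf:eq:supH}-type reasoning, for $s$ large and $|z|\le1$ we have $s^4+|z^2-s|^2\ge s^4$, so
\[
\|H_s\|_{L^\infty(B_1)}\le 32s^{-4}\to0.
\]
The same smallness holds for its derivatives, so $\|H_s\|_{C^{0,\sigma_H}}\to0$. Then $\|T_s\|_{X\to X}\to0$, and $I-T_s$ is invertible by a Neumann series. Alternatively, one can stay in $L^\infty$: a small first Dirichlet eigenvalue perturbation argument shows that $-\Delta-H_s$ is coercive on $H_0^1$ once $\|H_s\|_\infty<\lambda_1(B_1)$, and Schauder regularity then lifts injectivity to $X$, with index zero giving surjectivity.

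Finally, since $(0,\infty)$ is connected, a single invertible point rules out the degenerate branch of the alternative. This gives discreteness of $\mathcal E_D$ in $(0,\infty)$. Accumulation is possible only at $0$ or $\infty$, and accumulation at $\infty$ is excluded because $L_s$ is invertible for all large $s$.
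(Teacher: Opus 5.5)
Your proposal is correct and follows essentially the same route as the paper. Both arguments extend $H_s$ holomorphically to a connected complex neighborhood of $(0,\infty)$, apply the analytic Fredholm theorem to $(-\Delta)^{-1}H_s$, and use one invertible parameter to rule out the degenerate alternative. The differences are cosmetic: you apply the theorem directly on the H\"older space $X$ and obtain the invertible parameter from a Neumann series at large $s$, where $\|H_s\|_{C^1(\overline{B_1})}=O(s^{-4})$. The paper instead works on $L^2(B_1)$, checks invertibility at $s=2$ via $H_2\le 2<\pi^2/2\le\lambda_1(B_1)$, and transfers the result to $X\to Y$ by elliptic regularity.
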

\begin{proof}
Writing $r^2=x^2+y^2$, we have
\(
H_s(x,y)=\frac{32s^4r^2}
{\bigl(s^4+s^2-2s(x^2-y^2)+r^4\bigr)^2}.
\)
For positive real $s$, the denominator is nonzero on the closed disk.
Small complex disks about the positive parameters keep the denominator
uniformly separated from zero on $\overline{B_1}$. Their union is a
connected neighborhood of $(0,\infty)$ on which $H_s$ is holomorphic.

Let $A=-\Delta_D$ on $L^2(B_1)$ and let $M_{H_s}$ denote
multiplication by $H_s$. Then $A^{-1}M_{H_s}$ is a holomorphic family
of compact operators on $L^2(B_1)$. For a concrete invertible parameter, $H_2\le2$ on $B_1$ and
\(\lambda_1(B_1)\ge\lambda_1((-1,1)^2)=\pi^2/2>2.\)
Hence $I-A^{-1}M_{H_2}$ is invertible. The analytic Fredholm theorem
\cite[Theorem~5.3.1]{FredholmNotes} makes the exceptional set discrete;
elliptic regularity and the Fredholm alternative give the same
invertibility between $X$ and $Y$.
\end{proof}

Choose
\begin{equation}\label{mf:eq:selects}
s_k\in\left(\frac1{k+2},\frac1{k+1}\right)\setminus \mathcal E_D.
\end{equation}
This is possible because $\mathcal E_D$ contains no interval.
Then $s_k\downarrow0$ and each $L_{s_k}$ is invertible;
no uniform bound for $L_{s_k}^{-1}$ is required.

\subsection{The nonlinear perturbation}

Fix $s\notin \mathcal E_D$, put $\tau=\eps^2$, and seek a solution in the form
\begin{equation}\label{mf:eq:ansatz}
u=\log\tau+2\log|z|+v_s+\phi,\qquad \phi\in X.
\end{equation}
Since $\Delta(2\log|z|)=4\pi\delta_0$, equations
\eqref{mf:eq:seedPDE} and \eqref{mf:eq:ansatz} reduce \eqref{mf:eq:CS} exactly to
\begin{equation}\label{mf:eq:nonlinear}
\mathcal F_s(\phi,\tau):=
-\Delta\phi-H_s(e^\phi-1)+\tau H_s^2e^{2\phi}=0.
\end{equation}
The map $\mathcal F_s:X\times\R\to Y$ is smooth, and
\(\mathcal F_s(0,0)=0,\qquad D_\phi\mathcal F_s(0,0)=L_s.\)
The Banach-space implicit function theorem supplies a solution
$\phi_s(\tau)$ for $|\tau|$ sufficiently small, with $\phi_s(0)=0$.
Moreover,
\begin{equation}\label{mf:eq:phiExpansion}
\phi_s(\tau)=-\tau L_s^{-1}(H_s^2)+O_s(\tau^2)
\qquad\text{in }C^{2,\sigma_H}(\overline{B_1}).
\end{equation}
The allowed interval and the constants may depend on $s$.

For $s_k$ chosen in \eqref{mf:eq:selects}, take $\tau_k>0$ in its
implicit-function interval so small that
\begin{equation}\label{mf:eq:diagonal}
\|\phi_{s_k}(\tau_k)\|_{C^{2,\sigma_H}}\le s_k,
\qquad \tau_k\le s_k^4,
\qquad \tau_k<\tfrac14\tau_{k-1}\quad(k>1).
\end{equation}
Set $\eps_k=\sqrt{\tau_k}$, $\phi_k=\phi_{s_k}(\tau_k)$, and
\begin{equation}\label{mf:eq:solution}
u_k=2\log\eps_k+2\log|z|+v_{s_k}+\phi_k.
\end{equation}
These solve \eqref{mf:eq:CS} exactly, with smooth regular parts by
elliptic bootstrapping. Since $\phi_k=0$ on $\partial B_1$, they have
the traces \eqref{mf:eq:boundary}, whose oscillations are $O(s_k)$.

\subsection{Blow-up type, core estimates, and concentration}

By \eqref{mf:eq:supH} and \eqref{mf:eq:diagonal},
\begin{equation}\label{mf:eq:MF}
e^{u_k}=\tau_kH_{s_k}e^{\phi_k}
\le64e^{s_k}\tau_ks_k^{-3}
\le64e^{s_k}s_k\longrightarrow0.
\end{equation}
Thus $\sup_{B_1}u_k\to-\infty$. After discarding finitely many terms,
$u_k<0$ and $D_k\ge0$.

The regularized functions are $w_k=v_{s_k}+\phi_k$. At
$q_{\pm,k}=\pm\sqrt{s_k}$, we have
\begin{align}\label{mf:eq:failure}
w_k(q_{\pm,k})+4\log|q_{\pm,k}|
&=\log32-4\log s_k+2\log s_k+\phi_k(q_{\pm,k})\notag\\
&=2\log(1/s_k)+\log32+o(1)\longrightarrow+\infty.
\end{align}
Since $w_k\to-\infty$ locally away from zero and
$w_k(q_{\pm,k})\to+\infty$, the origin is its unique blow-up point;
\eqref{mf:eq:failure} proves non-simplicity.

Equations \eqref{mf:eq:exactcore} and \eqref{mf:eq:diagonal} yield
\begin{equation}\label{mf:eq:localprofile}
u_k(q_{\pm,k}+\ell_k\xi)-u_k(q_{\pm,k})
=-2\log(1+|\xi|^2)+O_R(s_k)
\quad\text{in }C^2(B_R)
\end{equation}
for each fixed $R$, where $\ell_k=s_k^{3/2}/2$.
Indeed, normalize \eqref{mf:eq:exactcore} by its value at zero,
take logarithms, and use $\|\phi_k\|_{C^2}\le s_k$; its numerator
stays positive on fixed $B_R$. The nondegenerate limiting maximum
gives maxima at $q_{\pm,k}+o(\ell_k)$, with separation
$2\sqrt{s_k}(1+o(1))$ and relative width $\ell_k/\sqrt{s_k}=s_k/2\to0$.

The exact nonlinear source is
\begin{equation}\label{mf:eq:D}
D_k=H_{s_k}e^{\phi_k}
\bigl(1-\tau_kH_{s_k}e^{\phi_k}\bigr).
\end{equation}
Using \eqref{mf:eq:totalmass}, \eqref{mf:eq:supH}, and \eqref{mf:eq:diagonal},
we obtain
\begin{align}\label{mf:eq:L1}
\|D_k-H_{s_k}\|_{L^1(B_1)}
&\le C\|\phi_k\|_\infty\int_{B_1}H_{s_k}\dd
 +C\tau_k\|H_{s_k}\|_\infty\int_{B_1}H_{s_k}\dd\notag\\
&\le Cs_k\longrightarrow0.
\end{align}

For mass exhaustion, fix $c\in(0,1/4)$. Outside
$B_{c\sqrt{s}}(\sqrt{s})\cup B_{c\sqrt{s}}(-\sqrt{s})$, write
$y=z/\sqrt{s}$. Both $|y-1|$ and $|y+1|$ are at least $c$, so
$|y^2-1|\ge c^2$ and $|f_s(z)|\ge c^2/s$. The degree-two
change of variables in \eqref{mf:eq:seedPDE} bounds the mass of this
region by
\[
2\int_{|w|\ge c^2/s}\frac8{(1+|w|^2)^2}\,dw=O(s^2).
\]
Symmetry and \eqref{mf:eq:totalmass} give mass $8\pi+O(s^2)$ in
each of the two core disks. These disks lie in $B_1$ for small $s$.
Combining this fact with \eqref{mf:eq:L1} proves
\(D_k\dd\rightharpoonup16\pi\delta_0.\)
Furthermore, \eqref{mf:eq:exactcore} and \eqref{mf:eq:L1} give
\[
\lim_{R\to\infty}\lim_{k\to\infty}
\int_{B_{R\ell_k}(q_{\pm,k})}D_k\dd
=\int_{\R^2}\frac8{(1+|\xi|^2)^2}\,d\xi=8\pi.
\]
The pointwise comparison \eqref{mf:eq:pointwise} remains valid for
$D_k$ on the core disks, up to uniform multiplicative constants,
by \eqref{mf:eq:MF} and \eqref{mf:eq:D}. This completes the proof of
Theorem~\ref{mf:thm:main}.

The corresponding boundary flux follows by integrating \eqref{mf:eq:CS}:
\begin{equation}\label{mf:eq:flux}
\int_{\partial B_1}\partial_{\boldsymbol n}u_k\,ds
=4\pi-\int_{B_1}D_k\dd\longrightarrow-12\pi.
\end{equation}
Thus one strength-one vortex can support mass $16\pi$ on the disk.
On a closed surface, total mass $4\pi N_{\mathrm{tot}}$ would instead
require $N_{\mathrm{tot}}=4$ for an exhausting mean-field pair.

\appendix
\section{Proofs and moment formulas for finite-height cores}
\label{app:pointwise-proofs}

This appendix proves Theorem~\ref{prop:refined-profile} and develops
its moment-sampling consequences. We retain the finite-height
assumption and notation of Section~\ref{sec:pointwise}.

\subsection{Local normalization and initial estimates}
\label{app:core-normalization}

Under Assumption~\ref{ass:profile}, retain the first-scale equation,
centers, and actual masses from Section~\ref{sec:cluster}. We use the
logarithmic comparison of Lemma~\ref{lem:isolated-core-log} and the
normalized-kernel rigidity of Lemma~\ref{thm:linearized-classification}.

Choose the local disk inside a slightly larger isolated-core
neighborhood. On a fixed annulus containing its boundary, the Green
representation and exterior oscillation estimate bound
$v_\eps-\fint_{\partial B_r(q_{t,\eps})}v_\eps$ uniformly.
The source is bounded there by \eqref{eq:isolated-core-log};
interior elliptic estimates and differentiation of the equation
therefore give uniform $C^{4,\sigma}$ bounds on a smaller annulus,
for any fixed $0<\sigma<1$.
For each $t$, let $\widetilde{\mathfrak h}_{t,\eps}$ be the harmonic function
satisfying
\begin{equation}
  \begin{cases}
  \Delta\widetilde{\mathfrak h}_{t,\eps}=0
  &\text{in} \ B_r(q_{t,\eps}),\\
  \displaystyle
  \widetilde{\mathfrak h}_{t,\eps}
  =v_\eps-\fint_{\partial B_r(q_{t,\eps})}v_\eps\,dS
  &\text{on}\ \partial B_r(q_{t,\eps}).
  \end{cases}
  \label{eq:harmonic-correction}
\end{equation}
Let $q_{t,\eps}^*$ be the maximum point of
$v_\eps-\widetilde{\mathfrak h}_{t,\eps}$ in $B_r(q_{t,\eps})$.
The bounded harmonic correction and \eqref{eq:isolated-core-log}
give $q_{t,\eps}^*-q_{t,\eps}=O(\etae)$, so this point remains a
uniform positive distance from the boundary. Subtract its value at
$q_{t,\eps}^*$ to normalize
\begin{equation}
  \mathfrak h_{t,\eps}
  :=\widetilde{\mathfrak h}_{t,\eps}
  -\widetilde{\mathfrak h}_{t,\eps}(q_{t,\eps}^*),
  \qquad
  \mathfrak h_{t,\eps}(q_{t,\eps}^*)=0.
  \label{eq:phi-normalization}
\end{equation}
Thus $\mathfrak u_{t,\eps}:=v_\eps-\mathfrak h_{t,\eps}$ is constant on the boundary.
Boundary Schauder estimates also give
$\|\mathfrak h_{t,\eps}\|_{C^{4,\sigma}(\overline{B_r(q_{t,\eps})})}\le C$.
All subsequent coefficient Taylor estimates consequently hold on the
full closed local disk.

Set
\begin{equation}\label{eq:H-alpha-local}
 \begin{aligned}
 H_{t,\eps}&:=a_\eps e^{\mathfrak h_{t,\eps}},\quad
 s_{t,\eps}:=\etae e^{-\varphi_i(\de q_{t,\eps}^*)/2},\quad
\alpha_{t,\eps}:=v_\eps(q_{t,\eps}^*)+2\log\etae
                   +\log a_\eps(q_{t,\eps}^*).
 \end{aligned}
\end{equation}
The two maximum properties and the Lipschitz bound on $\mathfrak h_{t,\eps}$ give
\[
 0\le v_\eps(q_{t,\eps})-v_\eps(q_{t,\eps}^*)
 \le C|q_{t,\eps}-q_{t,\eps}^*|=O(\etae).
\]
Together with \eqref{eq:a-uniform}, this yields
$\alpha_{t,\eps}=\vartheta_{t,\eps}
+\log a_\eps(q_{t,\eps})+O(\etae)\to\beta$.
Choosing $I$ in \eqref{eq:profile-hyp} with $\beta$ in its interior,
we have $\alpha_{t,\eps}\in I$ for small $\eps$ and
$m_{t,\eps}:=m(\alpha_{t,\eps})>m_*>4$. Define {
\begin{align}
\widehat v_{t,\eps}(z)
  &:={\mathfrak u}_{t,\eps}(q_{t,\eps}^*+s_{t,\eps}z)
    +2\log\etae+\log H_{t,\eps}(q_{t,\eps}^*),
    \quad
  A_{t,\eps}(z)
   :=\frac{H_{t,\eps}(q_{t,\eps}^*+s_{t,\eps}z)}
  {H_{t,\eps}(q_{t,\eps}^*)}.
  \label{eq:A-local}
\end{align}
Writing $\ell_{t,\eps}=\log A_{t,\eps}$ and
$\Gamma_{t,\eps}(z)=\varphi_i(\de(q_{t,\eps}^*+s_{t,\eps}z))
-\varphi_i(\de q_{t,\eps}^*)$, we obtain
\begin{equation}\label{eq:local-rescaled-equation}
 \Delta\widehat v_{t,\eps}+e^{\Gamma_{t,\eps}}
 F(\widehat v_{t,\eps}+\ell_{t,\eps})=0,\qquad
 \widehat v_{t,\eps}(0)=\alpha_{t,\eps},\quad
 \nabla\widehat v_{t,\eps}(0)=0.
\end{equation}

By local compactness and the regular-profile classification recalled
in \cite[pp.~840--841]{CKL}, with
$U_{t,\eps}:=U_{\alpha_{t,\eps}}$,
$\widehat v_{t,\eps}-U_{t,\eps}\to0$ in $C^2_{\loc}(\R^2)$.
Positivity of the source gives
$\liminf\widetilde m_{t,\eps}\ge m(\beta)>4$ along a subsequence
with $\alpha_{t,\eps}\to\beta$.
By the strict choice in \eqref{eq:profile-hyp}, both the actual local
slopes $\widetilde m_{t,\eps}$ and the profile slopes exceed the fixed
$m_*$ for all sufficiently small $\eps$.
The isolated-core estimate \eqref{eq:isolated-core-log}, used
with its actual slope, now gives}
\begin{equation}
  e^{\widehat v_{t,\eps}(z)}
  +|F'(\widehat v_{t,\eps}(z))|
  \le C(1+|z|)^{-m_*}
  \label{eq:scaled-source-decay}
\end{equation}
throughout the rescaled local disk. Local convergence together with
this integrable tail bound gives
$\widetilde m_{t,\eps}=m_{t,\eps}+o(1)$.

Put $W_{t,\eps}:=\widehat v_{t,\eps}-U_{t,\eps}$.
The rescaled disk is
\begin{equation}\label{eq:rescaled-core-disk}
 D_{t,\eps}=B_{R_{t,\eps}}(\zeta_{t,\eps}),\qquad
 R_{t,\eps}=\frac r{s_{t,\eps}},\qquad
 \zeta_{t,\eps}=\frac{q_{t,\eps}-q_{t,\eps}^*}{s_{t,\eps}}.
\end{equation}
The symbol $\mathfrak u_{t,\eps}$ always denotes this local harmonic
regularization; $u_\eps$ remains the original physical solution.
We suppress fixed-core indices: $\alpha=\alpha_{t,\eps}$,
$U=U_{t,\eps}$, $m=m_{t,\eps}$, while
$s_\eps,D_\eps,R_\eps,\zeta_\eps$ retain only the coupling index.

The local logarithmic coefficient has a small trace on the first scale.
Indeed, $\log H_{t,\eps}=\log a_\eps+\mathfrak h_{t,\eps}$
is harmonic on $B_r(q_{t,\eps})$, which avoids the vortex.
Thus $\operatorname{tr}\mathsf B_{t,\eps}=0$ exactly, and only the
conformal factor contributes to \(
 \left|
 \left.\Delta_y\log\bigl(e^{\varphi_i(\de y)}H_{t,\eps}(y)\bigr)
 \right|_{y=q_{t,\eps}^*}
 \right|\le C\de^2.\)

\subsection{Preliminary center and first-moment estimates}
\label{app:preliminary-estimates}

\begin{lemma}[Preliminary comparison of the two centers]
\label{lem:preliminary-center}
With the notation above,
\begin{equation}
  |q_{t,\eps}^*-q_{t,\eps}|\le C\etae^2.
  \label{eq:preliminary-center}
\end{equation}
\end{lemma}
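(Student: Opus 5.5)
We compare the two maximum points in the rescaled variable $z$, where the core has width one. There the claim reads $|\zeta_\eps|\le C\etae$, since $s_\eps\asymp\etae$ and $\zeta_\eps=(q_{t,\eps}-q^*_{t,\eps})/s_\eps$.

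By definition, $q^*_{t,\eps}$ maximizes $\mathfrak u_{t,\eps}=v_\eps-\mathfrak h_{t,\eps}$, so $0$ is a critical point of $\widehat v_{t,\eps}$. The point $q_{t,\eps}$ maximizes $v_\eps=\mathfrak u_{t,\eps}+\mathfrak h_{t,\eps}$, so $\zeta_\eps$ is a critical point of
\[
\widehat v_{t,\eps}(z)+\mathfrak h_{t,\eps}(q^*_{t,\eps}+s_\eps z).
\]
Hence
\[
\nabla\widehat v_{t,\eps}(\zeta_\eps)=-s_\eps\nabla\mathfrak h_{t,\eps}(q^*_{t,\eps}+s_\eps\zeta_\eps)=O(\etae),
\]
by the uniform $C^{4,\sigma}$ bound on $\mathfrak h_{t,\eps}$. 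We already know $\zeta_\eps=O(1)$, because $q^*-q=O(\etae)$ was recorded in the normalization. Local $C^2$ convergence $\widehat v_{t,\eps}-U_{t,\eps}\to0$ then places $\zeta_\eps$ in a fixed ball on which $D^2\widehat v_{t,\eps}$ is close to $D^2U_\alpha(0)=-\tfrac{\kappa_\alpha}{2}I_2$ by \eqref{eq:kappa-alpha}, with $\kappa_\alpha$ bounded below uniformly for $\alpha\in I$. Indeed, $U_\alpha$ is strictly concave only near the origin, so we first use the qualitative fact that $\zeta_\eps\to0$: $\widehat v$ has its critical points near those of $U$, and $U$ has the unique critical point $0$ because $U'<0$ for $r>0$.

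Once $\zeta_\eps\to0$, we write
\[
\nabla\widehat v_{t,\eps}(\zeta_\eps)-\nabla\widehat v_{t,\eps}(0)=\Bigl(\int_0^1D^2\widehat v_{t,\eps}(\theta\zeta_\eps)\,d\theta\Bigr)\zeta_\eps .
\]
The matrix is uniformly negative definite, and $\nabla\widehat v_{t,\eps}(0)=0$ by \eqref{eq:local-rescaled-equation}. This gives $|\zeta_\eps|\le C|\nabla\widehat v_{t,\eps}(\zeta_\eps)|\le C\etae$. Multiplying by $s_\eps\le C\etae$ yields \eqref{eq:preliminary-center}.

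The main obstacle is making the Hessian control uniform without circularity. It needs $C^2$ closeness of $\widehat v_{t,\eps}$ to $U_{t,\eps}$ on a fixed ball, uniformly over the moving heights $\alpha_{t,\eps}\in I$, rather than only along subsequences. I would argue by contradiction. If the ratio $|\zeta_\eps|/\etae$ were unbounded along some sequence, extract a subsequence with $\alpha_{t,\eps}\to\beta'\in I$. Then $\widehat v_{t,\eps}\to U_{\beta'}$ in $C^2_{\loc}$, by \eqref{eq:local-rescaled-equation}, the decay \eqref{eq:scaled-source-decay}, and the smooth dependence of $U_\alpha$ on $\alpha$. The coefficients $\Gamma_{t,\eps}$ and $\ell_{t,\eps}$ are $O(\etae)$ in $C^2$ on fixed balls, so elliptic estimates upgrade the convergence to $C^{2,\sigma}$. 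The concavity argument above then applies on this subsequence and contradicts the unbounded ratio.
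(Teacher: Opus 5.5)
Your proposal is correct and follows the paper's proof. Both use the critical-point identity $\nabla\widehat v_{t,\eps}(\zeta_{t,\eps})=-s_{t,\eps}\nabla\mathfrak h_{t,\eps}(q_{t,\eps})$, together with $\zeta_{t,\eps}\to0$ and uniform invertibility of the averaged Hessian near $D^2U_\alpha(0)=-\tfrac{\kappa_\alpha}{2}I_2$, to get $|\zeta_{t,\eps}|\le C\etae$, and then multiply by $s_{t,\eps}$; your derivation of $\zeta_{t,\eps}\to0$ from boundedness plus uniqueness of the critical point of $U_\alpha$ just makes explicit the paper's appeal to isolated-core comparison and strict radial monotonicity.
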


\begin{proof}
The isolated-core comparison and strict radial monotonicity give
$\zeta_{t,\eps}\to0$. Since $q_{t,\eps}$ is a critical point of
$v_\eps=\mathfrak u_{t,\eps}+\mathfrak h_{t,\eps}$,
\begin{equation}
  0={s_{t,\eps}\nabla v_\eps(q_{t,\eps})
  =\nabla\widehat v_{t,\eps}(\zeta_{t,\eps})+s_{t,\eps}\nabla\mathfrak h_{t,\eps}(q_{t,\eps}).
  \label{eq:preliminary-center-gradient}}
\end{equation}
Taylor's formula between $0$ and { $\zeta_{t,\eps}$,} the convergence
{ $\widehat v_{t,\eps}\to U_\alpha$} in $C^2_{\loc}$, and
$D^2U_\alpha(0)=-\kappa_\alpha I_2/2$ show that the Hessian in this
formula is uniformly invertible.  Since { $\nabla\mathfrak h_{t,\eps}$} is uniformly bounded,
\eqref{eq:preliminary-center-gradient} gives { $|\zeta_{t,\eps}|\le C\etae$,} which is
\eqref{eq:preliminary-center}.
\end{proof}

\begin{lemma}[Preliminary weighted estimate]
\label{lem:preliminary-pointwise}
Under the assumptions of Theorem~\ref{prop:refined-profile},
\begin{equation}\label{eq:preliminary-pointwise}
 |W_{t,\eps}(z)|\le C\etae(1+|z|)^\sigma,
 \qquad |\nabla W_{t,\eps}(z)|\le C\etae
 \quad\left(|z|\le\frac{3r}{4s_{t,\eps}}\right).
\end{equation}
In fact the estimate for $W_{t,\eps}$ holds on the full rescaled disk.
\end{lemma}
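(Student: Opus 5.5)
The estimate \eqref{eq:preliminary-pointwise} will be proved by a blow-up (contradiction) argument for the linearized equation satisfied by $W=W_{t,\eps}$, in the weighted norm $\|W\|_\sigma:=\sup_{D_\eps}|W|(1+|z|)^{-\sigma}$, using the sublinear kernel classification of Lemma~\ref{thm:linearized-classification}. First I write the equation for $W$. Subtracting $\Delta U+F(U)=0$ from \eqref{eq:local-rescaled-equation} gives
\[
 \Delta W+c_\eps(z)W=-E_\eps,\qquad W(0)=0,\quad\nabla W(0)=0,
\]
where $c_\eps=\int_0^1F'(U+\theta W)\,d\theta$ and
\[
 E_\eps=e^{\Gamma_{t,\eps}}F(\widehat v_{t,\eps}+\ell_{t,\eps})-F(\widehat v_{t,\eps}).
\]
Since $\ell_{t,\eps}(0)=0$, $|\nabla\ell_{t,\eps}|\le Cs_{t,\eps}$, $|\Gamma_{t,\eps}|\le C\de^2s_{t,\eps}|z|$, and $s_{t,\eps}\asymp\etae$, the decay \eqref{eq:scaled-source-decay} gives $|E_\eps(z)|\le C\etae(1+|z|)^{1-m_*}$. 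Because $m_*>4$, this is $O(\etae(1+|z|)^{-2-\rho})$ with $\rho=m_*-3>0$. Likewise $|c_\eps|\le C(1+|z|)^{-m_*}$ by \eqref{eq:scaled-source-decay}, together with a convexity bound on $F'$ between $U$ and $\widehat v$.

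Next comes the Green representation on $D_\eps$. The function $\mathfrak u_{t,\eps}$ is constant on $\partial B_r(q_{t,\eps})$, so $\widehat v$ is constant on $\partial D_\eps$, while $U$ on $\partial D_\eps$ equals $-m\log R_\eps+O(1)$ with oscillation $O(R_\eps^{-1}|\zeta_\eps|)$. Hence $W|_{\partial D_\eps}$ is a constant plus $O(\etae)$. I write
\[
 W(z)-W(0)=\int_{D_\eps}\bigl(G(z,y)-G(0,y)\bigr)(c_\eps W+E_\eps)(y)\,dy+\bigl(h(z)-h(0)\bigr),
\]
with $h$ harmonic and of boundary oscillation $O(\etae)$. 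Lemma~\ref{lem:weighted-green}, which covers disks whose centers $\zeta_\eps=O(\etae)$ are close to the normalization point by Lemma~\ref{lem:preliminary-center}, then yields
\[
 \|W\|_\sigma\le C\etae+C\Bigl\|\,(1+|y|)^{2+\rho}c_\eps W\Bigr\|_{L^\infty}.
\]
The gradient bound \eqref{eq:weighted-green-gradient} gives $|\nabla W|\le C(\etae+\|W\|_\sigma)(1+|z|)^{\sigma-1}$, which is at most $C\etae$ once the first estimate is known.

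Now argue by contradiction. Suppose $\Lambda_\eps:=\|W\|_\sigma/\etae\to\infty$ and set $\widetilde W=W/\|W\|_\sigma$. Then $\Delta\widetilde W+c_\eps\widetilde W=o(1)(1+|z|)^{-2-\rho}$ with $\widetilde W(0)=0$ and $\nabla\widetilde W(0)=0$, and $c_\eps\to F'(U_\beta)$ locally, because $W\to0$ in $C^2_{\loc}$. The weighted bound and elliptic estimates give $C^2_{\loc}$ compactness. The limit $\widetilde W_\infty$ satisfies $L_\beta\widetilde W_\infty=0$ with growth $(1+|z|)^\sigma$ and normalization zero at the origin, so $\widetilde W_\infty\equiv0$ by Lemma~\ref{thm:linearized-classification}.

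The main obstacle is ruling out that the supremum defining $\|\widetilde W\|_\sigma=1$ escapes to $|z_\eps|\to\infty$. For this I reuse the Green representation for $\widetilde W$ at $z_\eps$. The source $c_\eps\widetilde W$ is bounded by $(1+|y|)^{\sigma-m_*}$, and $m_*-\sigma>4>2$, so the contribution from $|y|\ge T$ is at most $C T^{-(m_*-\sigma-2)+\sigma'}(1+|z_\eps|)^\sigma$, which is small for large $T$. The contribution from $|y|\le T$ tends to zero by local convergence. Evaluating at $z_\eps$ gives $1\le o(1)+C T^{-\kappa}$, a contradiction; this is where the choice $\sigma<m_*-4$ is used. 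It follows that $\|W\|_\sigma\le C\etae$, and the gradient bound follows as above. On the full rescaled disk, the sup bound uses the same representation, since the boundary data were controlled exactly and not merely on the $3r/4$ disk.
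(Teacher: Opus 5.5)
Your proposal is correct and follows essentially the same route as the paper: exact linearization, a harmonic boundary part with small oscillation, the Green formula subtracted at the origin together with Lemma~\ref{lem:weighted-green}, a normalized blow-up contradiction using Lemma~\ref{thm:linearized-classification} plus tail smallness at the weighted maximizer, and differentiation of the Green formula for the gradient bound. The differences are cosmetic. You linearize between $U$ and $\widehat v$, with forcing $e^{\Gamma}F(\widehat v+\ell)-F(\widehat v)$, whereas the paper linearizes between $U+\ell$ and $\widehat v+\ell$, so its forcing $F(U)-e^{\Gamma}F(U+\ell)$ involves only the profile; also, the tail step here only needs $\sigma<m_*-2$, and the stronger restriction \eqref{eq:sigma-choice} is used later.
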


\begin{proof}
Suppress the core indices in $U$, $W$, $\ell$, and $\Gamma$.
By Lemma~\ref{lem:preliminary-center}, $\zeta_\eps=O(\etae)$.
The coefficient bounds on the full local disk give
\begin{equation}\label{eq:A-crude}
 |\ell(z)|+|\Gamma(z)|\le C\etae|z|\le C
 \qquad(z\in D_\eps).
\end{equation}
Subtract the equations by exact linearization:
\begin{equation}\label{eq:W-equation-exact}
 \Delta W+c_\eps W=\mathfrak f_\eps,
 \qquad
 \begin{aligned}
 c_\eps(z)&=e^{\Gamma(z)}\int_0^1
 F'(U(z)+\ell(z)+\theta W(z))\,d\theta,\\
 \mathfrak f_\eps(z)&=F(U(z))-e^{\Gamma(z)}F(U(z)+\ell(z)).
 \end{aligned}
\end{equation}
The exponential decay \eqref{eq:scaled-source-decay} applies at both
endpoints $U+\ell$ and $\widehat v+\ell$. Their geometric interpolation
therefore gives
\begin{equation}\label{eq:exact-linearization-bounds}
 |c_\eps(z)|\le C(1+|z|)^{-m_*},\qquad
 |\mathfrak f_\eps(z)|\le C\etae(1+|z|)^{1-m_*}.
\end{equation}
Also $c_\eps\to F'(U_\beta)$ locally along any subsequence for which
$\alpha_{t,\eps}\to\beta$. No bound on $\|W\|_\infty$ has been used.

Let $h_\eps$ be the harmonic extension of $W|_{\partial D_\eps}$.
Since $\widehat v$ is constant there, radiality and
$|U'(r)|\le C/r$ give
\begin{equation}\label{eq:harmonic-boundary-error}
 \operatorname{osc}_{D_\eps}h_\eps
 \le\operatorname{osc}_{\partial D_\eps}U
 \le C|\zeta_\eps|/R_\eps\le C\etae^2.
\end{equation}
For the positive Dirichlet Green function of $-\Delta$,
$-\Delta W=c_\eps W-\mathfrak f_\eps$. Since $W(0)=0$, subtraction at the
origin yields
\begin{equation}\label{eq:green-preliminary-contradiction}
 W(z)=h_\eps(z)-h_\eps(0)+\int_{D_\eps}
 [G_\eps(z,y)-G_\eps(0,y)]
 [c_\eps(y)W(y)-\mathfrak f_\eps(y)]\,dy.
\end{equation}
Lemma~\ref{lem:weighted-green} also applies to $D_\eps$: translate
the disk and subtract its values at the translated origin; the weights
are uniformly equivalent since $\zeta_\eps=O(\etae)$.

Suppose that $\mathcal N_\eps:=\max_{\overline D_\eps}
|W(z)|/(1+|z|)^\sigma$ satisfies
$\Lambda_\eps:=\mathcal N_\eps/\etae\to\infty$, and set
$\widehat W_\eps=W/\mathcal N_\eps$. Then
$|\widehat W_\eps(z)|\le(1+|z|)^\sigma$ and
$\widehat W_\eps(0)=\nabla\widehat W_\eps(0)=0$.
Local elliptic compactness and \eqref{eq:exact-linearization-bounds}
show that every local limit solves $L_\beta\widehat W=0$ with these
normalizations. Lemma~\ref{thm:linearized-classification} makes
that limit zero. A maximizer of the weighted norm must consequently
escape every fixed disk.

Divide \eqref{eq:green-preliminary-contradiction} at such a maximizer
$z_\eps$ by $\mathcal N_\eps(1+|z_\eps|)^\sigma$. The harmonic term is
$O(\etae/\Lambda_\eps)$, and the forcing potential is
$O(\Lambda_\eps^{-1})$ by Lemma~\ref{lem:weighted-green}. For the
remaining potential first restrict to a fixed $B_R$, where
$\widehat W_\eps\to0$. On its complement choose
$0<\rho<m_*-2-\sigma$ and use
\[
 |c_\eps\widehat W_\eps|
 \le C(1+|y|)^{-m_*+\sigma}
 \le C R^{-(m_*-2-\sigma-\rho)}(1+|y|)^{-2-\rho}.
\]
The weighted Green estimate makes the tail arbitrarily small,
uniformly in $\eps$. The resulting zero limit contradicts the unit
weighted value at $z_\eps$. Hence $\mathcal N_\eps\le C\etae$ on the full
disk. Differentiating the Green formula on $|z|\le3r/(4s_\eps)$ gives
$|\nabla W|\le C\etae$: the source is bounded by
$C\etae(1+|z|)^{-m_*+1}$, and the harmonic gradient is $O(\etae^3)$.

In particular $\|W\|_{L^\infty(D_\eps)}
\le C\etae^{1-\sigma}\to0$, which justifies the quadratic Taylor
expansions used below.
\end{proof}

\begin{lemma}[First moment on a tangent circle]\label{lem:first-moment}
Suppress the core indices, including $\mathfrak u=\mathfrak u_{t,\eps}$.
Put $\mathbf{d}=q^*-q$, $r_\eps=r-|\mathbf{d}|$, and
$\boldsymbol\omega(\theta)=(\cos\theta,\sin\theta)$.
Then
\begin{equation}\label{eq:first-moment-coarse}
 \frac1{2\pi}\int_0^{2\pi}\mathfrak u(q^*+r_\eps \boldsymbol\omega(\theta))\boldsymbol\omega(\theta)\,d\theta
 =-\frac{m}{2r}\mathbf{d}+O(\etae^2).
\end{equation}
\end{lemma}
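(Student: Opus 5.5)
The circle $\partial B_{r_\eps}(q^*)$ is internally tangent to $\partial B_r(q)$, because $r_\eps+|\mathbf d|=r$. It therefore lies in the closed local disk, where $\mathfrak u$ and the Green representation are available. I would compute the first Fourier coefficient of $\mathfrak u$ on this circle from the Dirichlet Green representation of $\mathfrak u$ on $B_r(q)$.

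\textbf{Green representation.} Since $\mathfrak u=v_\eps-\mathfrak h$ is constant on $\partial B_r(q)$, we have
\[
\mathfrak u(x)=\mathrm{const}+\int_{B_r(q)}G_{B_r(q)}(x,y)f_\eps(y)\,dy .
\]
The constant has zero first moment. Writing $G_{B_r(q)}(x,y)=\frac1{2\pi}\log\frac1{|x-y|}+\mathcal H(x,y)$, we split the first moment into a singular part and a regular part.

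\textbf{Singular part.} For $|y-q^*|<r_\eps$, the standard expansion gives
\[
\frac1{2\pi}\int_0^{2\pi}\log\frac1{|q^*+r_\eps\boldsymbol\omega-y|}\,\boldsymbol\omega\,d\theta=-\frac{y-q^*}{2r_\eps}.
\]
Integrated against $f_\eps/(2\pi)$, this contributes $-\frac1{2r_\eps}\int(y-q^*)\,d\mu$, where $d\mu=f_\eps\,dy/(2\pi)$. By \eqref{eq:scaled-source-decay} with $m_*>4$, the first moment of the density about $q^*$ is bounded by $C\etae\cdot\etae$ times a convergent integral; the odd part of the radial profile vanishes and the deviation is $O(\etae)$ by Lemma~\ref{lem:preliminary-pointwise}. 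Hence this term is $O(\etae^2)$. The mass of $\mu$ outside $B_{r_\eps}(q^*)$ is $O(\etae^{m_*-2})$, which is negligible.

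\textbf{Regular part.} For the disk one has
\[
\mathcal H(x,y)=\frac1{2\pi}\log\frac{|y-q|\,|x-y^\#|}{r},\qquad y^\#=q+\frac{r^2(y-q)}{|y-q|^2}.
\]
For $y$ near $q$, $\log|x-y^\#|\approx\log\frac{r^2}{|y-q|}-\frac{(y-q)\cdot(x-q)}{r^2}$. Hence $\mathcal H(x,y)$ is smooth in $x$ with linear part $-\frac{(y-q)\cdot(x-q)}{2\pi r^2}$. Taking the first moment in $x$ over the circle, with $x-q=\mathbf d+r_\eps\boldsymbol\omega$, gives $-\frac{r_\eps}{4\pi r^2}(y-q)$ plus $O(|y-q|^2)$ corrections. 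Integrating against $f_\eps$, and using that the first moment about $q$ equals $\widetilde m(q^*-q)+O(\etae^2)=\widetilde m\,\mathbf d+O(\etae^2)$, yields $-\frac{r_\eps}{2r^2}\widetilde m\,\mathbf d+O(\etae^2)$.

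\textbf{Combining.} Since $r_\eps=r+O(\etae^2)$ by Lemma~\ref{lem:preliminary-center}, and $\widetilde m=m+o(1)$ multiplies $\mathbf d=O(\etae^2)$, the total is $-\frac{m}{2r}\mathbf d+O(\etae^2)$, which is \eqref{eq:first-moment-coarse}.

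The main obstacle is the exact disk-Green expansion in the regular part. One must check that the quadratic Taylor terms in $(y-q)$ of $\mathcal H$ have zero first circular moment, or are bounded by the second moment $O(s_\eps^2)=O(\etae^2)$. The first-moment bound on the density only needs the $O(\etae)$ weighted estimate, which is uniform because the tail decays faster than $|z|^{-4}$ when $m_*>4$. I would first verify the regular-part expansion explicitly for a point mass and then integrate.
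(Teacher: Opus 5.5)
Your proposal is correct and follows essentially the paper's proof: the Dirichlet Green representation of $\mathfrak u$ on $B_r(q)$ is split into a logarithmic part, whose first mode vanishes against the radial profile up to an $O(\etae^2)$ error from the weighted bound $\int(1+|z|)|\mathcal F_\eps-F(U)|\,dz\le C\etae$, and a regular part, which produces $-\frac{m}{2r}\mathbf d$. The differences are cosmetic. The paper uses the exact projected regular kernel $-\frac{r_\eps y}{4\pi(r^2-\overline{\mathbf d}y)}$ with the mean-value property at $\mathbf d$, where you linearize about $q$ and bound the correction by the second moment. Also, your singular identity should read $+\frac{y-q^*}{2r_\eps}$ rather than $-\frac{y-q^*}{2r_\eps}$; this is harmless because that term is $O(\etae^2)$ either way.
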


\begin{proof}
Suppress $t,\eps$, translate $q$ to zero, and write $q^*=\mathbf{d}$. Since $\mathfrak u$
is constant on $\partial B_r$, its Green representation is
\(
 \mathfrak u(x)=C+\int_{B_r}G_r(x,y)f_\eps(y)\,dy.
\)
Here
$G_r(x,y)=-(2\pi)^{-1}\log|x-y|
 +(2\pi)^{-1}\log(|r^2-\bar yx|/r)=:G_r^{\rm sing}+G_r^{\rm reg}$.
Under $y=\mathbf{d}+s_\eps z$, the source becomes
\begin{equation}\label{eq:correct-source-change-variable}
 f_\eps(y)\,dy=\mathcal F_\eps(z)\,dz,\qquad
 \mathcal F_\eps:=e^\Gamma F(U+W+\ell).
\end{equation}
Restoring the core index, we write $\mathcal F_{t,\eps}=\mathcal F_\eps$.
Lemma~\ref{lem:preliminary-pointwise} and the coefficient expansion give
\begin{equation}\label{eq:source-moment-error}
 |\mathcal F_\eps-F(U)|\le C\etae(1+|z|)^{1+\sigma-m_*},
 \qquad
 \int(1+|z|)|\mathcal F_\eps-F(U)|\,dz\le C\etae.
\end{equation}
The integrals are over the rescaled disk. The unweighted source mass
in the far tail $|z|\ge c/\etae$ is
$O(\etae^{m_*-2})=o(\etae^2)$.

For $\Pi_\eps f=(2\pi)^{-1}
\int_0^{2\pi}f(\mathbf{d}+r_\eps \boldsymbol\omega(\theta))\boldsymbol\omega(\theta)\,d\theta$, the identity
\[
 \frac1{2\pi}\int_0^{2\pi}
 \log|r_\eps \boldsymbol\omega(\theta)-\zeta|\,\boldsymbol\omega(\theta)\,d\theta
 =-\frac12
 \begin{cases}
 \zeta/r_\eps,&|\zeta|<r_\eps,\\
 r_\eps\zeta/|\zeta|^2,&|\zeta|>r_\eps
 \end{cases}
\]
shows that the radial leading source has zero singular first mode on
$|z|<r_\eps/s_\eps$; its remaining tail is $o(\etae^2)$.
The source error is controlled by \eqref{eq:source-moment-error}, giving
\(
 \int_{B_r}\Pi_\eps[G_r^{\rm sing}(\,\cdot\,,y)]f_\eps(y)\,dy
 =O(\etae^2).
\)
The regular projected kernel has the exact complex formula
\begin{equation}\label{eq:regular-projected-kernel}
 K_\eps(y):=\Pi_\eps[G_r^{\rm reg}(\,\cdot\,,y)]
 =-\frac{r_\eps y}{4\pi(r^2-\overline{\mathbf{d}} y)}.
\end{equation}
It is harmonic with uniformly bounded derivatives on $B_r$.
The radial mean-value identity, with the same tail bound, gives
\begin{equation}\label{eq:regular-radial-reduction}
 \int K_\eps(\mathbf{d}+s_\eps z)F(U(z))\,dz
 =2\pi mK_\eps(\mathbf{d})+o(\etae^2).
\end{equation}
\begin{equation}\label{eq:regular-leading-first-mode}
 2\pi mK_\eps(\mathbf{d})=-\frac{m \mathbf{d}}{2(r+|\mathbf{d}|)}
 =-\frac{m}{2r}\mathbf{d}+O(\etae^4).
\end{equation}
Because $K_\eps(\mathbf{d})=O(|\mathbf{d}|)$ and $K_\eps$ is uniformly Lipschitz near
$\mathbf{d}$, replacing $F(U)$ by $\mathcal F_\eps$ costs $O(\etae^2)$.
Combining the two parts proves \eqref{eq:first-moment-coarse}.
\end{proof}

\subsection{Proof of the quantitative approximation theorem}
\label{app:pointwise-proof}

\begin{proof}[Proof of Theorem~\ref{prop:refined-profile}]
We suppress the core indices and put $\kappa=F(\alpha)$,
$\mathbf{d}=q^*-q$, and
{ 
\begin{equation}
  \mathbf{b}:=\nabla\log H(q^*),
  \qquad
  \mathbf{g}:=\de\nabla_y\varphi_i(y)\Big|_{y=\de q^*},
  \qquad
  \mathbf{b}^{\rm eff}:=\mathbf{b}+\frac m4\mathbf{g},
  \label{eq:b-g-effective}
\end{equation}
}
The conformal normalization gives
\begin{equation}
  \mathbf{g}=O(\de^2),
  \qquad
  s_\eps=\etae(1+O(\de^2)).
  \label{eq:s-g-size}
\end{equation}

Lemma~\ref{lem:preliminary-center} gives $\mathbf{d}=O(\etae^2)$ and
$\zeta_\eps=-\mathbf{d}/s_\eps=O(\etae)$. The maximum condition is
\begin{equation}
  0
  =
  s_\eps\nabla v_\eps(q)
  =
  \nabla\widehat v(\zeta_\eps)
  +s_\eps\nabla\mathfrak h(q).
  \label{eq:max-condition-scaled}
\end{equation}

\needspace{5\baselineskip}
\medskip
\noindent
{ 
\emph{Step 1: the conformal first Fourier correction.}
}

Taylor expansion gives
{ 
\begin{align}
  \log A(z)
  &=
  s_\eps \mathbf{b}\cdot z
  +O(s_\eps^2|z|^2),
  \ \
  \varphi_i(\de(q^*+s_\eps z))
  -\varphi_i(\de q^*)
  =
  s_\eps \mathbf{g}\cdot z
  +O(\de^2s_\eps^2|z|^2).
  \label{eq:conformal-first}
\end{align}
}
By Lemma~\ref{lem:preliminary-pointwise},
{ 
\begin{align}
  L_\alpha W
  ={}&
  -s_\eps(\mathbf{b}\cdot z)F'(U_\alpha)
  -s_\eps(\mathbf{g}\cdot z)F(U_\alpha)
  +E_\eps^{(2)},
  \label{eq:W-first-decomposition}
\end{align}
}
where \begin{equation}
\begin{aligned}
  |E_\eps^{(2)}(z)|
  &\le 
  c\left(\eta_\eps^2(1+\de^2)
  (1+|z|)^{2-m_*}
 +
  (1+|z|)^{-m_*}|W(z)|^2\right)  \le C \eta_\eps^2
  (1+|z|)^{2-m_*}.
  \label{eq:E2-crude}
\end{aligned}\end{equation}

For the coefficient first mode set
\begin{equation}
  Z_{\mathbf{b}}(z)
  :=
  -s_\eps \mathbf{b}\cdot z
  -\frac{2s_\eps}{\kappa}
   \mathbf{b}\cdot\nabla U_\alpha(z).
  \label{eq:explicit-first-mode}
\end{equation}
Then
\(L_\alpha Z_{\mathbf{b}} = -s_\eps(\mathbf{b}\cdot z)F'(U_\alpha).\)

{ 
For the conformal first mode, let $Y_\alpha$ be the normalized regular
solution of
\begin{equation}
  Y_\alpha''
  +\frac1{\varrho}Y_\alpha'
  -\frac1{\varrho^2}Y_\alpha
  +F'(U_\alpha)Y_\alpha
  =
  -\varrho F(U_\alpha),
  \qquad
  Y_\alpha(0)=Y_\alpha'(0)=0.
  \label{eq:Y-first-mode}
\end{equation}}
The normalized solution has the explicit form
\begin{equation}\label{eq:Y-explicit}
 Y_\alpha(\varrho)=\frac{\varrho^2}{4}U_\alpha'(\varrho).
\end{equation}
Indeed, let $L_{\alpha,1}=\partial_\varrho^2+
\varrho^{-1}\partial_\varrho-\varrho^{-2}+F'(U_\alpha)$.
Since $L_{\alpha,1}U_\alpha'=0$, the radial profile equation gives
\[
 L_{\alpha,1}(\varrho^2U_\alpha')
 =4(\varrho U_\alpha''+U_\alpha')
 =-4\varrho F(U_\alpha).
\]
Formula~\eqref{eq:Y-explicit} has the required normalization at zero;
uniqueness follows because the regular homogeneous first mode is a
multiple of $U_\alpha'$, whose derivative at zero is $-\kappa/2\ne0$.
It also gives, with differentiated expansions,
\[
 \begin{aligned}
 Y_\alpha(\varrho)&=-\kappa\varrho^3/8+O(\varrho^5)
 &&(\varrho\to0),\quad\
 Y_\alpha(\varrho)&=-\frac m4\varrho+O(\varrho^{3-m_*})
 &&(\varrho\to\infty).
 \end{aligned}
\]
Consequently
\begin{equation}\label{eq:Q-first-mode}
 \begin{aligned}
 Q_\alpha(\varrho)
 &:=Y_\alpha(\varrho)+\frac m4\varrho
       +\frac{m}{2\kappa}U_\alpha'(\varrho) =\left(\frac{\varrho^2}{4}+\frac{m}{2\kappa}\right)
       U_\alpha'(\varrho)+\frac m4\varrho
 \end{aligned}
\end{equation}
satisfies $Q_\alpha=O(\varrho^3)$, $Q_\alpha'=O(\varrho^2)$ at zero
and $Q_\alpha=O(\varrho^{-1})$, $Q_\alpha'=O(\varrho^{-2})$ at infinity.

Set
{ 
\begin{equation}
  \Xi_1(z)
  :=
  s_\eps Q_\alpha(|z|)
  \mathbf{g}\cdot\frac z{|z|}.
  \label{eq:Xi1-definition}
\end{equation}
}
Then
\begin{equation}
  |\Xi_1(z)|+|\nabla\Xi_1(z)|
  \le C\etae\de^2.
  \label{eq:Xi1-bound}
\end{equation}

With $Z_{\mathbf g}(z):=s_\eps Y_\alpha(|z|)\mathbf g\cdot z/|z|$,
the definitions of $\mathbf b^{\rm eff}$ and $Q_\alpha$ give
\begin{align}
  Z_{\mathbf{b}}(z)+Z_{\mathbf{g}}(z)
  ={}&
  -s_\eps \mathbf{b}^{\rm eff}\cdot z
  -\frac{2s_\eps}{\kappa}
   \mathbf{b}^{\rm eff}\cdot\nabla U_\alpha(z)
  +\Xi_1(z).
  \label{eq:first-mode-effective-decomposition}
\end{align}

Let \(\widetilde W:=W-\Xi_1\).
Then \eqref{eq:W-first-decomposition} implies that
{ 
\begin{equation}
  L_\alpha\widetilde W
  =
  -s_\eps(\mathbf{b}^{\rm eff}\cdot z)F'(U_\alpha)
  + E_\eps^{(2)},
  \label{eq:Wtilde-equation}
\end{equation}
}
with the same remainder \eqref{eq:E2-crude}.

\medskip
\noindent{ 
\emph{Step 2: a refined weighted estimate.}
}
We claim that
\begin{equation}
  |\widetilde W(z)|
  \le
  Cs_\eps\bigl(|\mathbf{b}^{\rm eff}|+s_\eps\bigr)(1+|z|)^\sigma, 
  \qquad
  |z|\le\frac{3r}{4s_\eps}.
  \label{eq:Wtilde-refined}
\end{equation}
To prove this, apply the full-disk normalization and Green argument
of Lemma~\ref{lem:preliminary-pointwise} to
$\widetilde W=W-\Xi_1$, with comparison scale
$\mathfrak n_\eps=s_\eps(|\mathbf{b}^{\rm eff}|+s_\eps)$. The source in
\eqref{eq:Wtilde-equation} divided by $\mathfrak n_\eps$ is bounded by
$C(1+|z|)^{2-m_*}$, and the boundary oscillation is
$O(\etae^2)+O(s_\eps^2\de^2)=O(\mathfrak n_\eps)$.
Consequently the normalized forcing and boundary terms vanish in a
contradiction sequence. Local limits vanish by
Lemma~\ref{thm:linearized-classification}, and the Green tails are
uniformly small since $m_*>4+\sigma$. This proves
\eqref{eq:Wtilde-refined}; its weighted bound holds on the full disk
$D_\eps$.

We next claim that
\begin{equation}
  |\mathbf{b}^{\rm eff}|\le Cs_\eps.
  \label{eq:beff-O-s}
\end{equation}
Otherwise, along a subsequence,
$|\mathbf{b}^{\rm eff}|/s_\eps\to\infty$,
$\mathbf{b}^{\rm eff}/|\mathbf{b}^{\rm eff}|\to\mathbf e_*$, $|\mathbf e_*|=1$, and
$\alpha_\eps\to\alpha_*$. The functions
$\widetilde W/(s_\eps|\mathbf{b}^{\rm eff}|)$ converge locally to $\Psi$ with
\begin{equation}\label{eq:Psi-effective-limit}
 L_{\alpha_*}\Psi=-(\mathbf e_*\cdot z)F'(U_{\alpha_*}),\qquad
 \Psi(0)=0,\quad\nabla\Psi(0)=0,\quad
 |\Psi(z)|\le C(1+|z|)^\sigma.
\end{equation}
Rotate so that $\mathbf e_*=(1,0)$. The cosine first mode $\Psi_1$ satisfies
the forced $k=1$ equation, so $\Psi_1(\varrho)+\varrho$ solves its homogeneous
equation. Regularity at zero implies
$\Psi_1(\varrho)=-\varrho+cU_{\alpha_*}'(\varrho)=-\varrho+O(\varrho^{-1})$, contradicting
$\sigma<1$. This proves \eqref{eq:beff-O-s}.

Combining \eqref{eq:beff-O-s} with
\eqref{eq:Wtilde-refined}, we obtain
\begin{equation}
  |\widetilde W(z)|
  \le
  C\eta_\eps^2(1+|z|)^\sigma,
  \qquad
  |z|\le\frac{3r}{4s_\eps}.
  \label{eq:Wtilde-O2-weighted}
\end{equation}
Moreover, Lemma~\ref{lem:weighted-green}, together with the standard
interior gradient estimate for the harmonic boundary part, gives
\begin{equation}
  |\nabla\widetilde W(z)|
  \le
  C\eta_\eps^2,
  \qquad
  |z|\le\frac{3r}{4s_\eps}.
  \label{eq:grad-Wtilde-O2}
\end{equation}

\medskip
\noindent
{ 
\emph{Step 3: sharp first Fourier estimate.}
}
For $\Pi_1f(\varrho)=(2\pi)^{-1}\int_0^{2\pi}
f(\varrho \boldsymbol\omega(\theta))\boldsymbol\omega(\theta)\,d\theta$, set
\begin{equation}\label{eq:P1-definition}
 Z_{\rm eff}(z)=-s_\eps \mathbf{b}^{\rm eff}\cdot z
 -\frac{2s_\eps}{\kappa}\mathbf{b}^{\rm eff}\cdot\nabla U_\alpha(z),
 \qquad \mathcal R=\widetilde W-Z_{\rm eff}.
\end{equation}
Then $L_\alpha\mathcal R=E_\eps^{(2)}$ and
$\mathcal R(0)=\nabla\mathcal R(0)=0$.
To make the first-mode cancellation explicit, put
\[
 \begin{aligned}
 a^{(1)}(z)&=\Xi_1(z)+s_\eps\mathbf b\cdot z,
 &\Gamma^{(1)}(z)&=s_\eps\mathbf g\cdot z,\\
 \ell^{(2)}(z)&=\frac{s_\eps^2}{2}z^T\mathsf Bz,
 &\Gamma^{(2)}(z)&=\frac{s_\eps^2}{2}z^T\mathsf Cz.
 \end{aligned}
\]
where $\mathsf B,\mathsf C$ are defined in \eqref{eq:B-tracefree}.
The quadratic coefficient terms and the quadratic products of first
modes contribute $-\mathcal Q_2$ to $E_\eps^{(2)}$, where
\[
 \begin{aligned}
 \mathcal Q_2={}&F'(U)\ell^{(2)}+F(U)\Gamma^{(2)}
  +\frac12F''(U)(a^{(1)})^2 +F'(U)\Gamma^{(1)}a^{(1)}
  +\frac12F(U)(\Gamma^{(1)})^2.
 \end{aligned}
\]
At each fixed radius, $a^{(1)}$ and $\Gamma^{(1)}$ are pure first
Fourier modes, whereas $\ell^{(2)}$ and $\Gamma^{(2)}$ contain only
modes $0$ and $2$. Products of two first modes also contain only
modes $0$ and $2$, so $\Pi_1\mathcal Q_2=0$ exactly.
On $|z|\le3r/(4s_\eps)$, the remaining terms are estimated using
\[
 \begin{aligned}
 |a^{(1)}|+|\Gamma^{(1)}|\le Cs_\eps(1+|z|),\ \ \
 |\ell^{(2)}|+|\Gamma^{(2)}|\le Cs_\eps^2(1+|z|)^2,\ \ \
 |\widetilde W|\le Cs_\eps^2(1+|z|)^\sigma.
 \end{aligned}
\]
The coefficient Taylor remainders and cubic products give the power
$(1+|z|)^{3-m_*}$ below. The remaining mixed products with
$\widetilde W$ and its square give the powers
$(1+|z|)^{2+\sigma-m_*}$, $(1+|z|)^{1+\sigma-m_*}$, and
$(1+|z|)^{2\sigma-m_*}$; higher products are absorbed using
$s_\eps(1+|z|)\le C$ and $\sigma<1$.
Together with \eqref{eq:E2-crude} on the outer annulus, this yields
\begin{align}
 |\Pi_1E_\eps^{(2)}(\varrho)|
 &\le Cs_\eps^3\big[(1+\varrho)^{3-m_*}+(1+\varrho)^{2+\sigma-m_*}
       \notag\\[-1mm]
 &\hspace{32mm} +(1+\varrho)^{1+\sigma-m_*}+(1+\varrho)^{2\sigma-m_*}\big],
 &&\varrho\le\frac{3r}{4s_\eps},\label{eq:P1-error-after-refinement}\\
 |\Pi_1E_\eps^{(2)}(\varrho)|
 &\le Cs_\eps^2(1+\varrho)^{2-m_*},
 &&\frac{3r}{4s_\eps}\le \varrho\le\frac{r_\eps}{s_\eps},
 \label{eq:P1-error-outer}
\end{align}
where $r_\eps=r-|\mathbf{d}|$. Let $h_1=U_\alpha'$ and choose $h_2$ so that
$\mathcal W(h_1,h_2)=1/\varrho$; uniformly on $I$,
$|h_1|\le C/(1+\varrho)$ and $|h_2|\le C(\varrho+\varrho^{-1})$.
Variation of parameters gives
\begin{equation}\label{eq:P1-R-variation}
 \Pi_1\mathcal R(\varrho)
 =-h_1(\varrho)\int_0^\varrho h_2(\tau)\Pi_1E_\eps^{(2)}(\tau)\tau\,d\tau
 +h_2(\varrho)\int_0^\varrho h_1(\tau)\Pi_1E_\eps^{(2)}(\tau)\tau\,d\tau.
\end{equation}
Since $m_*>4$ and $\sigma<m_*-4$, these integrals satisfy
\begin{align}
 \int_0^{r_\eps/s_\eps}|h_1(\tau)|
       |\Pi_1E_\eps^{(2)}(\tau)|\tau\,d\tau
 &\le Cs_\eps^3,\label{eq:P1-growing-moment}\\
 |h_1(\varrho)|\int_0^\varrho|h_2(\tau)|
       |\Pi_1E_\eps^{(2)}(\tau)|\tau\,d\tau
 &\le Cs_\eps^2
 \quad(1\le \varrho\le r_\eps/s_\eps).
 \label{eq:P1-decaying-part}
\end{align}
The outer contribution in the first estimate is
$O(s_\eps^{m_*-1})=o(s_\eps^3)$. The growing term is also
$O(s_\eps^2)$ because $h_2(\varrho)=O(\varrho)$, and hence
\begin{equation}\label{eq:first-mode-remainder}
 |\Pi_1\mathcal R(\varrho)|\le Cs_\eps^2
 \qquad(1\le \varrho\le r_\eps/s_\eps).
\end{equation}
At $\varrho=r_\eps/s_\eps$, use $\mathbf{b}^{\rm eff}=O(s_\eps)$,
$U_\alpha'(\varrho)=O(s_\eps)$, and
$\Pi_1\Xi_1(\varrho)=O(s_\eps^2\de^2)$ to obtain
\begin{align}
 \Pi_1\widehat v(r_\eps/s_\eps)
 &=-\frac{r_\eps}{2}\mathbf{b}^{\rm eff}+O(s_\eps^2),
 \qquad 
 \Pi_1\widehat v(r_\eps/s_\eps)
 =-\frac m{2r}\mathbf{d}+O(\etae^2)=O(\etae^2).
 \label{eq:P1-green-boundary}
\end{align}
The second line is Lemma~\ref{lem:first-moment}. Comparing the two,
with $r_\eps=r+O(\etae^2)$ and $s_\eps\asymp\etae$, proves
$|\mathbf{b}^{\rm eff}|\le C\etae^2$, namely assertion~(ii).

\medskip\noindent
\emph{Step 4: the center shift.}
By \eqref{eq:Wtilde-O2-weighted}--\eqref{eq:grad-Wtilde-O2},
$\|W-\Xi_1\|_{C^1(B_R)}\le C_R\etae^2$ for every fixed $R$.
At $\zeta_\eps=-\mathbf d/s_\eps=O(\etae)$, the expansion
$Q_\alpha(\varrho)=O(\varrho^3)$ gives
$|\nabla\Xi_1(\zeta_\eps)|\le C\etae^3\de^2$, while
$\nabla U_\alpha(\zeta_\eps)=-\kappa\zeta_\eps/2+O(|\zeta_\eps|^3)$.
Insert these estimates and
$\nabla\mathfrak h(q)=\nabla\mathfrak h(q^*)+O(\etae^2)$
into \eqref{eq:max-condition-scaled} and multiply by $s_\eps\asymp\etae$:
\(
 \mathbf d=-\frac{2s_\eps^2}{\kappa}\nabla\mathfrak h(q^*)
             +O(\etae^3+\etae^4\de^2).
\)
This proves assertion~\textup{(iii)}.

\medskip
\noindent
\emph{Step 5: the second Fourier mode.}
For $\mathsf B,\mathsf C$ from \eqref{eq:B-tracefree}, Taylor expansion yields
\begin{align}
 \ell(z)=\log A(z)
 &=s_\eps \mathbf{b}\cdot z+\frac{s_\eps^2}{2}z^T\mathsf B z+O(s_\eps^3|z|^3),
 \label{eq:logA-second-correct}\\
 \Gamma(z)
 &=s_\eps \mathbf{g}\cdot z+\frac{s_\eps^2}{2}z^T\mathsf C z
   +O(s_\eps^3\de^3|z|^3).
 \label{eq:logGamma-second-correct}
\end{align}
The decomposition $z^T\mathsf S z=\tfrac12(\tr\mathsf S)|z|^2+z^T\mathsf S^\circ z$
separates modes $0$ and $2$, giving
\begin{align}
 e^\Gamma F(U+W+\ell)
 ={}&F(U)+F'(U)W+s_\eps[F'(U)\mathbf{b}+F(U)\mathbf{g}]\cdot z
 \notag\\
 &+\frac{s_\eps^2}{4}[F'(U)\tr\mathsf B+F(U)\tr\mathsf C]|z|^2
 \notag\\
 &+\frac{s_\eps^2}{2}[F'(U)z^T\mathsf B^\circ z+F(U)z^T\mathsf C^\circ z]
   +\mathcal R_\eps^{(2)}+\mathcal R_\eps^{(3)},
 \label{eq:full-second-order-expansion}
\end{align}
where $\mathbf{b}=O(\etae^2+\de^2)$, $W=\widetilde W+\Xi_1$, and the
established weighted bounds give
\begin{align}
 |\mathcal R_\eps^{(2)}|
 &\le C\big[\etae^4(1+|z|)^{-m_*+2\sigma}
       +\etae^4(1+|z|)^{-m_*+4}
 +(\etae^6+\etae^2\de^4)(1+|z|)^{-m_*+2}\big],
 \label{R1}\\
 |\mathcal R_\eps^{(3)}|
 &\le C\etae^3(1+|z|)^{-m_*+3}.
 \label{R2}
\end{align}
For either angular component of \eqref{eq:Psi2-equation}, let $f(\varrho)$
be its radial right-hand side. Let $h_0\sim\varrho^2$ at zero and
$h_\infty\sim\varrho^{-2}$ at infinity solve the homogeneous $k=2$ equation.
Their constant Wronskian factor
$\omega=\varrho(h_0h_\infty'-h_0'h_\infty)$ is nonzero by
Lemma~\ref{thm:linearized-classification}. The regular solution with
no growing branch is
\[
 \psi(\varrho)=\frac1\omega\left[
 h_\infty(\varrho)\int_0^\varrho t h_0(t)f(t)\,dt
 +h_0(\varrho)\int_\varrho^\infty t h_\infty(t)f(t)\,dt\right].
\]
Indeed, $f(\varrho)=O(\varrho^2)$ at zero and
$f(\varrho)=O(\varrho^{2-m_*})$ at infinity;
the formula gives $\psi=O(\varrho^2)$ at zero and bounded
$|\psi|+\varrho|\psi'|$ at infinity since $m_*>4$.
The bounds are uniform for $\alpha\in I$ by compactness and
$\omega\ne0$; the same kernel classification gives uniqueness.
Combining the two angular components defines $\Psi_2$ and gives
\begin{equation}\label{eq:Psi2-bound}
 \Psi_2(0)=0,\quad\nabla\Psi_2(0)=0,\qquad
 |\Psi_2(z)|+(1+|z|)|\nabla\Psi_2(z)|\le C.
\end{equation}
For \(
 \mathcal T=W-\Xi_1-s_\eps^2\Psi_2,\)
we obtain
\begin{align}
 L_\alpha\mathcal T
 ={}&-\frac{s_\eps^2}{4}[F'(U)\tr\mathsf B+F(U)\tr\mathsf C]|z|^2
 \notag\\
 &-s_\eps(\mathbf{b}^{\rm eff}\cdot z)F'(U)
   -\mathcal R_\eps^{(2)}-\mathcal R_\eps^{(3)},
 \qquad\mathcal T(0)=\nabla\mathcal T(0)=0.
 \label{eq:W-second-equation-correct}
\end{align}
Since $|\mathbf{b}^{\rm eff}|=O(\etae^2)$,
$|\tr\mathsf B|+|\tr\mathsf C|=O(\de^2)$, and $|z|\le C/\etae$,
\begin{equation}\label{eq:second-mode-equation-bound}
 |L_\alpha\mathcal T|
 \le C[\etae^3(1+|z|)^{-m_*+3}
       +\etae^2\de^2(1+|z|)^{-m_*+2}].
\end{equation}
Set $\tau_\eps=\etae^{2+\sigma}+\etae^2\de^2$ and choose
$0<\rho<m_*-4-\sigma$. The normalization of the first source term in
\eqref{eq:second-mode-equation-bound} is justified, uniformly for
$|z|\le C/\etae$, by
\[
 \etae^{1-\sigma}(1+|z|)^{5+\rho-m_*}
 \le C\bigl(\etae^{1-\sigma}
             +\etae^{m_*-4-\sigma-\rho}\bigr)\le C.
\]
Indeed, use $1+|z|\ge1$ when $5+\rho-m_*\le0$ and
$1+|z|\le C/\etae$ otherwise. Both powers of $\etae$ are positive,
since $\sigma<1$ and $m_*-4-\sigma-\rho>0$.
For the second source term,
\(\frac{\etae^2\de^2}{\tau_\eps} (1+|z|)^{4+\rho-m_*}\le1,\)
because $\tau_\eps\ge\etae^2\de^2$ and $4+\rho-m_*<0$.
Consequently,
$\tau_\eps^{-1}|L_\alpha\mathcal T|
\le C(1+|z|)^{-2-\rho}$, without any relation between $\etae$ and $\de$.
The harmonic extension $h_\eps$ of $\mathcal T|_{\partial D_\eps}$
has oscillation $O(\etae^2)$. Recall that
$D_\eps=B_{R_\eps}(\zeta_\eps)$, with
$R_\eps\asymp\etae^{-1}$ and $\zeta_\eps=O(\etae)$.
On $|z|\le R_\eps/2$, the interior gradient estimate gives
$|h_\eps(z)-h_\eps(0)|\le C\etae^3|z|$.
On the remaining part of $D_\eps$, the same inequality follows from
the oscillation bound and $|z|\ge R_\eps/2$.
Thus, using $\sigma<1$ and $\tau_\eps\ge\etae^{2+\sigma}$,
\[
 |h_\eps(z)-h_\eps(0)|\le C\etae^3|z|
 \le C\tau_\eps(1+|z|)^\sigma\qquad(z\in D_\eps).
\]
The full-disk Green and normalized-kernel argument of
Lemma~\ref{lem:preliminary-pointwise} now gives
$|\mathcal T|\le C\tau_\eps(1+|z|)^\sigma$.
This proves assertion~(iv).

\medskip\noindent
\emph{Step 6: the local mass and uniform estimate.}
By \eqref{eq:correct-source-change-variable},
$2\pi\widetilde m_{t,\eps}=\int_{D_\eps}\mathcal F_\eps$.
In \eqref{eq:full-second-order-expansion}, the first and trace-free
second modes integrate to zero on centered disks. The displacement
$\zeta_\eps=O(\etae)$ of $D_\eps$ contributes only a decaying tail;
indeed, the omitted radial mass is $O(\etae^{m_*-2})$.
The trace terms are $O(\etae^2\de^2)$, and the largest remainder
integrals are
\[
 \etae^3\int_0^{C/\etae}(1+\varrho)^{3-m_*}\varrho\,d\varrho
 +\etae^4\int_0^{C/\etae}(1+\varrho)^{4-m_*}\varrho\,d\varrho
 =O(\etae^{2+\sigma}).
\]
At $m_*=5$ or $6$ the additional logarithm is absorbed by the strict
choice \eqref{eq:sigma-choice}. Moreover,
$W=\mathcal T+\Xi_1+s_\eps^2\Psi_2$, so the same angular cancellation
and the weighted bound on $\mathcal T$ give
\[
 2\pi\widetilde m_{t,\eps}
 =2\pi m_{t,\eps}+\int_{D_\eps}F'(U_\alpha)W\,dz+O(\tau_\eps)
 =2\pi m_{t,\eps}+O(\tau_\eps).
\]
This proves assertion~(v).
Finally, on $|z|\le r/(2s_\eps)$, since $\sigma<1$,
\[
 \tau_\eps(1+|z|)^\sigma
 \le C(\etae^2+\etae^{2-\sigma}\de^2)
 \le C(\etae^2+\etae\de^2).
\]
Hence $|W|\le|\mathcal T|+|\Xi_1|+s_\eps^2|\Psi_2|
\le C(\etae^2+\etae\de^2)$, and interior elliptic estimates give
the same bound for $\nabla W$.
If $\varphi_i$ is constant, $\Xi_1$ and the metric trace vanish;
$\log H$ is already harmonic by its definition. This proves assertion~(i) and
completes the theorem.
\end{proof}

\subsection{Moment-sampling consequences}
\label{app:moment-consequences}

\begin{proposition}[Second-moment sampling formula]
\label{prop:moment-sampling}
Use the first-scale source $f_\eps$ from
\eqref{eq:first-scale-equation-sec3}. For
$f\in C^{2,\sigma}(\overline{B_r(q_{t,\eps})})$,
\begin{align}
  \int_{B_r(q_{t,\eps})}f(y)f_\eps(y)\,dy
  ={}&2\pi m_{t,\eps}f(q_{t,\eps}^*)
  -\etae^2J(\alpha_{t,\eps})
   \Delta f(q_{t,\eps}^*)
  \notag\\
  &+O\bigl(\etae^{2+\sigma}
  +\etae^2\de^2\bigr)
  \|f\|_{C^{2,\sigma}(B_r(q_{t,\eps}))}.
  \label{eq:moment-sampling}
\end{align}
In particular, the $\etae^2$ term vanishes whenever $f$ is harmonic in the
local disk.
\end{proposition}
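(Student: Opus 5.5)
We change variables $y=q_{t,\eps}^*+s_{t,\eps}z$, so that by \eqref{eq:correct-source-change-variable} $f_\eps(y)\,dy=\mathcal F_\eps(z)\,dz$ with $\mathcal F_\eps=e^\Gamma F(U+W+\ell)$ on the rescaled disk $D_\eps$. We Taylor expand the test function to second order about $q^*=q^*_{t,\eps}$:
\[
 f(q^*+s_\eps z)=f(q^*)+s_\eps\nabla f(q^*)\cdot z+\tfrac{s_\eps^2}{2}z^TD^2f(q^*)z+\mathcal E_f(z),
 \qquad |\mathcal E_f(z)|\le C\|f\|_{C^{2,\sigma}}(s_\eps|z|)^{2+\sigma}.
\]
The integral then splits into four pieces, tested against $\mathcal F_\eps$: the constant term $f(q^*)\int\mathcal F_\eps$, the first moment, the second moment, and the remainder. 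For the constant term we use $\int_{D_\eps}\mathcal F_\eps=2\pi\widetilde m_{t,\eps}$ together with Theorem~\ref{prop:refined-profile}(v). This gives $2\pi m_{t,\eps}f(q^*)$ up to $O(\etae^{2+\sigma}+\etae^2\de^2)$.

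For the first moment we write $\mathcal F_\eps=F(U)+\bigl(\mathcal F_\eps-F(U)\bigr)$. The radial part $F(U)$ has zero first moment on centered disks, and the displacement $\zeta_\eps=O(\etae)$ of $D_\eps$ costs only a tail of size $O(\etae^{m_*-2})$. For the difference we use the expansion \eqref{eq:full-second-order-expansion}: by (ii), $\mathbf b^{\rm eff}=O(\etae^2)$, and $W=\mathcal T+\Xi_1+s_\eps^2\Psi_2$. The first-mode content of the linear terms then reduces to $F'(U)\mathcal T$, $F'(U)\Xi_1$, and $s_\eps[F'(U)\mathbf b^{\rm eff}+F(U)\mathbf g]\cdot z$ plus cubic remainders. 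We should exploit the identity $L_\alpha\mathcal R=E^{(2)}$ rather than estimating $\int z\,F'(U)W$ crudely. Testing the equation for $W$ against the linear function $z_k$ (integrating $\Delta W\cdot z_k$ by parts) converts the first moment of the source into boundary terms. Those boundary terms are controlled by $|\nabla W|\le C\etae^2$ and by $\Pi_1\widehat v$ on the boundary from Lemma~\ref{lem:first-moment}. After multiplication by $s_\eps$, the first moment should be $O(s_\eps\etae^{1+\sigma}+s_\eps\etae\de^2)$, which is within the error.

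For the second moment we use the decomposition $z^TSz=\tfrac12(\tr S)|z|^2+z^TS^\circ z$. The trace part against $F(U)$ gives $\tfrac{s_\eps^2}{4}\Delta f(q^*)\int|z|^2F(U)\,dz=-s_\eps^2J(\alpha)\Delta f(q^*)$ by Lemma~\ref{lem:J-negative}. Since $s_\eps^2=\etae^2(1+O(\de^2))$, this becomes $-\etae^2J\Delta f(q^*)$ up to $O(\etae^2\de^2)$. The trace-free part vanishes against the radial $F(U)$. Against $\mathcal F_\eps-F(U)$, which is $O(\etae)(1+|z|)^{1+\sigma-m_*}$ by \eqref{eq:source-moment-error}, it gives $s_\eps^2\cdot O(\etae)$ up to the growth factor of $|z|^2$; since $m_*>4+\sigma$ the integral converges and the contribution is $O(\etae^3)$. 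Finally, the remainder $\mathcal E_f$ integrated against $\mathcal F_\eps\le C(1+|z|)^{-m_*}$ gives $O(s_\eps^{2+\sigma})$, because $\int|z|^{2+\sigma}(1+|z|)^{-m_*}$ is finite when $m_*>4+\sigma$ by \eqref{eq:sigma-choice}.

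The main obstacle is the first-moment term. A naive bound gives only $s_\eps\cdot O(\etae)=O(\etae^2)$ with no $\sigma$ gain, because $F'(U)\Xi_1$ and the $\mathbf g$-term are genuinely first-mode. What is needed is to show that these combine exactly into $L_\alpha$ of something whose moment is a boundary flux. Concretely, we would verify that $\int z\,(F'(U)Z+F(U)\mathbf g\cdot z)$ vanishes for the explicit $Y_\alpha$ from \eqref{eq:Y-explicit}, using $\int z_kL_\alpha\phi=\text{boundary}$. We would also check that the residual $\Pi_1E^{(2)}$ has vanishing leading order by \eqref{eq:P1-error-after-refinement}, which gives the moment $O(s_\eps^3)$. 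The harmonic-case statement follows because $\Delta f=0$ kills the $J$ term.
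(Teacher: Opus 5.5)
Your architecture matches the paper's: rescale by $y=q^*_{t,\eps}+s_{t,\eps}z$, Taylor-expand $f$ to second order, take the zeroth moment from Theorem~\ref{prop:refined-profile}(v) and the trace part from Lemma~\ref{lem:J-negative}, discard the trace-free part by radiality, and integrate the $(2+\sigma)$-remainder using $m_*>4+\sigma$. The gap is the first-moment step, which you single out as the crux but do not close.

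The obstacle you describe is not there. By \eqref{eq:s-g-size}, $\mathbf g=O(\de^2)$, and by \eqref{eq:Xi1-bound}, $|\Xi_1|\le C\etae\de^2$. So the $\mathbf g$-term and $F'(U)\Xi_1$ have first moments $O(\etae\de^2)$. After the factor $s_\eps\nabla f(q^*)$ they cost $O(\etae^2\de^2)$, which is exactly the second part of the admissible error.

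The flux argument you put in its place does not yield the bound. On a centered circle of radius $\rho\asymp\etae^{-1}$ (for instance the tangent circle of Lemma~\ref{lem:first-moment}), Green's identity gives exactly $\int_{B_\rho}z_k\mathcal F_\eps\,dz=-\oint_{\partial B_\rho}(z_k\partial_\nu W-W\nu_k)\,dS$, since the radial profile drops out. This merely re-expresses the moment through first-mode boundary data of $W$. With the controls you cite, the gradient term is at most $\rho\cdot2\pi\rho\cdot C\etae^2=O(1)$, and that gradient bound is in any case proved only for $|z|\le3r/(4s_\eps)$. The value term is $2\pi\rho\,\Pi_1\widehat v(\rho)=O(\etae)$. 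Neither is $O(\etae^{1+\sigma})$, so your asserted bound $O(s_\eps\etae^{1+\sigma}+s_\eps\etae\de^2)$ does not follow. The exact vanishing of the first moment of $F'(U)Z_{\mathbf g}+s_\eps(\mathbf g\cdot z)F(U)$ is true but beside the point.

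What closes the step is the direct count, for which you had already written the decomposition.
\begin{itemize}
\item Restrict to the centered disk $|z|\le3r/(4s_\eps)$. Since $\mathcal F_\eps\le C(1+|z|)^{-m_*}$, the rest of $D_\eps$ costs $O(\etae^{m_*-3})=O(\etae^{1+\sigma})$ in the first moment.
\item In \eqref{eq:full-second-order-expansion} with $W=\mathcal T+\Xi_1+s_\eps^2\Psi_2$, the radial term, $F'(U)\Psi_2$, and the second-order coefficient terms vanish against $z$ by parity.
\item The linear coefficient carries $\mathbf b$, not $\mathbf b^{\rm eff}$. But $\mathbf b=\mathbf b^{\rm eff}-\frac m4\mathbf g=O(\etae^2+\de^2)$ by (ii), so that term gives $O(\etae^3+\etae\de^2)$.
\item By (iv), $\int|z|\,|F'(U)|\,|\mathcal T|\,dz\le C(\etae^{2+\sigma}+\etae^2\de^2)$.
\item \eqref{R1}--\eqref{R2} with weight $|z|$ contribute $O(\etae^{1+\sigma})$; the worst case is $\etae^3\int_0^{C/\etae}(1+\varrho)^{4-m_*}\varrho\,d\varrho$.
\end{itemize}
These slowly decaying remainders, not $\mathbf g$ or $\Xi_1$, are what limit the gain to $\etae^{1+\sigma}$.

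There is also a slip in your second-moment step. The integral $\int|z|^2(1+|z|)^{1+\sigma-m_*}\,dz$ converges only for $m_*>5+\sigma$, not for $m_*>4+\sigma$. For $4+\sigma<m_*<5+\sigma$ its truncation to $|z|\le C/\etae$ is of order $\etae^{m_*-5-\sigma}$. Your term is then $O(\etae^{m_*-2-\sigma})$, which is not $O(\etae^{2+\sigma})$ when $m_*<4+2\sigma$. There are two ways to fix this:
\begin{itemize}
\item use the refined expansion as the paper does, where every perturbation of the second moment is $O(\etae^\sigma(1+\de^2))$ with slowest tail $\etae^{m_*-4}$; or
\item use the sharper crude bound $|\mathcal F_\eps-F(U)|\le C\etae(1+|z|)^{1-m_*}$, valid because $\sigma<1$, which gives $O(\etae^{m_*-2}+\etae^3|\log\etae|)=O(\etae^{2+\sigma})$.
\end{itemize}
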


\begin{proof}
Use \eqref{eq:correct-source-change-variable} on $D_{t,\eps}$.
Its zeroth moment is \eqref{eq:local-mass-refined}. The first two
weighted moments satisfy
\begin{align}
 \int_{D_{t,\eps}} z\,\mathcal F_{t,\eps}(z)\,dz
 &=O(\etae^{1+\sigma}+\etae\de^2),
 \label{eq:source-first-moment-correct}\\
 \int_{D_{t,\eps}} z_jz_k\,\mathcal F_{t,\eps}(z)\,dz
 &=\frac{\delta_{jk}}2\int_{\R^2}|z|^2F(U_{t,\eps})\,dz
   +O(\etae^\sigma(1+\de^2)),
 \label{eq:source-second-moment-correct}
\end{align}
Here is the tail and angular bookkeeping. For $j=0,1,2$, we have 
\(
 \int_{|z|\ge c/\etae}|z|^jF(U_{t,\eps})\,dz
 \le C\etae^{m_*-2-j}.
\)
Thus replacing the shifted disk by a centered one has admissible
error in each moment. In the first moment, the radial and second-mode
terms in \eqref{eq:full-second-order-expansion} vanish by parity.
The linear coefficient and $\Xi_1$ terms contribute
$O(\etae^3+\etae\de^2)$; the $\mathcal T$ term and
\eqref{R1}--\eqref{R2}, integrated with weight $|z|$, contribute
$O(\etae^{1+\sigma}+\etae\de^2)$.
For example the cubic term is bounded by
$C\etae^3\int_0^{C/\etae}(1+\varrho)^{4-m_*}\varrho\,d\varrho
=O(\etae^{1+\sigma})$.
With weight $|z|^2$, all perturbation terms are
$O(\etae^\sigma(1+\de^2))$; the slowest tail is
$\etae^{m_*-4}=O(\etae^\sigma)$.
These estimates prove \eqref{eq:source-first-moment-correct}--
\eqref{eq:source-second-moment-correct}; borderline logarithms are
again absorbed by \eqref{eq:sigma-choice}.

Taylor-expand $f(q_{t,\eps}^*+s_{t,\eps}z)$ to order two.
Its linear term is within the stated error by
\eqref{eq:source-first-moment-correct}. Radiality and
Lemma~\ref{lem:J-negative} together with  $s_{t,\eps}^2=\etae^2(1+O(\de^2))$ give the quadratic term
\[
 \frac{s_{t,\eps}^2}{4}\Delta f(q_{t,\eps}^*)
       \int_{\R^2}|z|^2F(U_{t,\eps})\,dz
 =-\etae^2J(\alpha_{t,\eps})\Delta f(q_{t,\eps}^*)
   +O(\etae^2\de^2)\|f\|_{C^{2,\sigma}}.
\]
The Taylor remainder is integrable by
$2+\sigma<m_*-2$. This proves \eqref{eq:moment-sampling}.
\end{proof}

\section*{Acknowledgments}

The authors are grateful to Professor Ky Ho of the University of Economics
Ho Chi Minh City for valuable discussions throughout the development
and completion of this work.

The authors also acknowledge the use of OpenAI's Codex AI assistant
in developing the construction and its proof, checking algebraic
identities, and organizing and preparing the \LaTeX{} manuscript.
The authors are responsible for the mathematical arguments and the
final content of the manuscript.

\section*{Statements and Declarations}

\subsection*{Funding}
Y. Lee was supported by a National Research Foundation of Korea (NRF) grant funded by the Korea government (MSIT) (No. RS-2026-25489601). L. Zhang was supported by a Simons foundation travel grant Award ID:SFI-MPS-TSM-00013752. 

\subsection*{Competing interests}
The authors declare that they have no competing interests.

\subsection*{Data availability}
No datasets were generated or analysed in this theoretical study.
Data sharing is therefore not applicable to this article.

\end{document}